\documentclass[11p]{article}

\topmargin-0.9cm \textheight=23cm \oddsidemargin0.5cm
\textwidth=15cm \evensidemargin0.5cm

\usepackage[numbers,sort]{natbib}
\bibpunct[, ]{[}{]}{,}{n}{}{,}%

\usepackage[utf8]{inputenc}
\usepackage{svg}
\usepackage{amsmath}
\usepackage{amsfonts}
\usepackage{amsthm}
\usepackage{hyperref}
\usepackage{mathtools}
\usepackage{amssymb}

\DeclarePairedDelimiter\floor{\lfloor}{\rfloor}
\usepackage{breakurl}
\usepackage{cleveref}

\usepackage[english]{babel}
\usepackage[utf8]{inputenc}
\usepackage{graphicx}
\usepackage{xcolor}
\usepackage{color}
\renewcommand{\vec}[1]{\mathbf{#1}}
\usepackage{mathrsfs}
\usepackage[paper=a4paper,margin=1in]{geometry}
\usepackage[nottoc]{tocbibind}
\usepackage[numbers]{natbib}
\usepackage[colorinlistoftodos]{todonotes}
\usepackage[normalem]{ulem}
\DeclareMathOperator{\Tr}{Tr}
\newtheorem{theorem}{Theorem}
\newtheorem{corollary}{Corollary}[theorem]
\newtheorem{lemma}[theorem]{Lemma}
\newtheorem{proposition}[theorem]{Proposition}
\newtheorem{conjecture}{Conjecture}
\newtheorem{definition}{Definition}

\newcommand{\GraphName}{\mathcal{G}_n}
\newcommand{\Setname}{f^+}
\begin{document}

\title{On the generalized  $\vartheta$-number and related problems for highly symmetric graphs}
\author{Lennart Sinjorgo \thanks{CentER, Tilburg University, The Netherlands, {\tt l.m.sinjorgo@tilburguniversity.edu}}
	\and {Renata Sotirov}  \thanks{Department of Econometrics and OR, Tilburg University, The Netherlands, {\tt r.sotirov@uvt.nl}}}

\date{}

\maketitle

\begin{abstract}
This paper is an in-depth analysis of the generalized $\vartheta$-number  of a graph.
The generalized $\vartheta$-number, $\vartheta_k(G)$, serves as a bound for both the $k$-multichromatic number of a graph and the maximum $k$-colorable subgraph problem. We present various properties of $\vartheta_k(G)$,  such as that the sequence  $(\vartheta_k(G))_k$ is increasing and bounded from above by the order of the graph $G$.  We study  $\vartheta_k(G)$ when $G$ is the strong, disjunction or Cartesian product of two graphs. We provide closed form expressions for the generalized $\vartheta$-number on several classes of graphs including the Kneser graphs, cycle graphs,   strongly regular graphs and orthogonality graphs.
Our paper provides  bounds on the product and sum of the $k$-multichromatic number of a graph and its complement graph,  as well as lower bounds for the $k$-multichromatic number on several graph classes including the Hamming  and Johnson graphs.
\end{abstract}

{\bf Keywords}  $k$--multicoloring,  $k$-colorable subgraph problem,  generalized $\vartheta$-number, Johnson graphs, Hamming graphs, strongly regular graphs.\\

{\bf AMS subject classifications.} 90C22, 05C15, 	90C35

\section{Introduction}
A $k$--multicoloring of a graph is an assignment of $k$ distinct colors to each vertex in the graph such that two adjacent vertices are assigned  disjoint sets of colors. The $k$-multicoloring is also known as $k$-fold coloring, $k$-tuple coloring or simply multicoloring. We denote by $\chi_k(G)$  the minimum number of colors needed for a valid $k$--multicoloring of a graph $G$, and refer to it as the $k$-th chromatic number of $G$ or the multichromatic number of $G$.
Multicoloring seems to have been independently introduced by   \citeauthor{hilton19735} \cite{hilton19735} and \citeauthor{stahl1976n} \cite{stahl1976n}. The $k$--multicoloring is a generalization of the well known standard graph coloring. Namely,  $\chi(G):=\chi_1(G)$ is known as the chromatic number of a graph $G$. Not  surprisingly, multicoloring finds applications in comparable areas, such as job scheduling  \cite{gandhi2004improved,halldorsson2004multicoloring}, channel assignment in cellular networks \cite{narayanan2002channel} and register allocation in computers \cite{chaitin1982register}. There exist several results on $\chi_k(G)$ for specific classes of graphs. In particular,  \citeauthor{lin2008multicoloring} \cite{lin2008multicoloring} and \citeauthor{lin2010multi} \cite{lin2010multi} consider  multicoloring the Mycielskian of  graphs,
\citeauthor{ren2010k} \cite{ren2010k} study multicoloring of planar graphs while \citeauthor{marx2002complexity} \cite{marx2002complexity} proves that the multicoloring problem is strongly NP-hard in binary trees. \citeauthor{cranston2018planar} \cite{cranston2018planar} show that, for any planar graph $G$, $\chi_2(G) \leq 9$. This result is implied by the famous four-color theorem by \citeauthor{appel1977every} \cite{appel1977every}, but it has a much simpler proof.

The maximum $k$-colorable subgraph (M$k$CS) problem  is to find  the largest induced subgraph in a given graph that can be colored with $k$ colors so that no two adjacent vertices have the same color. When $k = 1$, the M$k$CS problem reduces to the well known  maximum stable set problem.
The M$k$CS problem is one of the  NP-complete problems considered by~\citeauthor{ksubgrNP} \cite{ksubgrNP}. We denote by $\alpha_k(G)$ the number of vertices in a maximum $k$-colorable subgraph of $G$, and by  $\omega_k(G)$ the size of the largest  induced subgraph that can be partitioned into $k$ cliques. When  $k=1$, the graph parameter   $\omega(G):=\omega_1(G)$ is known as the clique number of a graph,  and  $\alpha(G):=\alpha_1(G)$ as the independence number of a graph.
We note that  $\alpha_k(G) = \omega_k(\overline{G})$, where $\overline{G}$ denotes the complement of $G$. The  M$k$CS problem has  a number of applications such as channel assignment in spectrum sharing networks~\cite{KosterScheffel,networksApp2},  VLSI design \cite{VLSI1} and human genetic research~\cite{VLSI2,biology}. There exist several results on $\alpha_k(G)$ for specific classes of graphs. The size of the maximum $k$-colorable subgraph for the Kneser graph $K(v, 2)$ is provided by \citeauthor{FurediFrankl} \cite{FurediFrankl}. \citeauthor{chordal1} \cite{chordal1} consider the  M$k$CS problem for chordal graphs,  \citeauthor{iTriang} \cite{iTriang} study the problem for an $i-$triangulated graph, and \citeauthor{colorThesis} \cite{colorThesis} computes $\alpha_k(G)$ for  circular-arc graphs and tolerance graphs.

\citeauthor{narasimhan1988generalization} \cite{narasimhan1988generalization} introduce a graph parameter $\vartheta_k(G)$ that serves as a bound for both the minimum number of colors needed for a $k$--multicoloring of a graph $G$ and the number of vertices in a maximum $k$-colorable subgraph of $G$. The parameter $\vartheta_k(G)$   generalizes the concept of the  famous $\vartheta$-number  that  was introduced by   \citeauthor{lovasz1979shannon} \cite{lovasz1979shannon} for bounding the Shannon capacity of a graph  \cite{Shannon1956TheZE}. The  Lov{\'a}sz theta number is a  widely studied graph parameter see e.g., \cite{chan2009sdp,grotschel1981ellipsoid,gvozdenovic2008operator,knuth1994sandwich,LovaszBook,mceliece1978lovasz}. The Lov{\'a}sz theta number  provides bounds for both the clique number and the chromatic number of a graph, both of which are NP-hard to compute.  The well known result that establishes the following  relation $\alpha_1(G) \leq \vartheta_1(G) \leq \chi_1 (\overline{G})$ or equivalently $\omega_1(G) \leq \vartheta_1(\overline{G}) \leq \chi_1 (G)$  is known as the sandwich theorem \cite{LovaszBook}. The Lov{\'a}sz theta number can be computed in polynomial time as an semidefinite programming (SDP) problem by using interior point methods.
Thus, when the clique number and chromatic number of a graph coincide i.e., when the graph is weakly perfect, the Lov{\'a}sz theta number provides those quantities in polynomial time.

Despite the popularity of the Lov{\'a}sz theta number, the function $\vartheta_k(G)$ has received little attention in the literature. \citeauthor{narasimhan1988generalization} \cite{narasimhan1988generalization} show that $\alpha_k(G) \leq \vartheta_k(G) \leq \chi_k (\overline{G})$ or equivalently  $\omega_k(G) \leq \vartheta_k(\overline{G}) \leq \chi_k (G)$.
These inequalities can be seen as a generalization of the Lov\'asz sandwich theorem.
\citeauthor{Alizadeh} \cite{Alizadeh} formulates the generalized $\vartheta$-number using semidefinite programming. \citeauthor{kuryatnikova2020maximum} \cite{kuryatnikova2020maximum} introduce the generalized $\vartheta'$-number that is obtained by  adding non-negativity constraints to the SDP formulation of the  $\vartheta_k$-number. The generalized $\vartheta$-number and $\vartheta'$-number are evaluated numerically as upper bounds for the M$k$CS problem in \cite{kuryatnikova2020maximum}. The authors of \cite{kuryatnikova2020maximum} characterise a family of graphs for which  $\vartheta_k(G)$ and $\vartheta'_k(G)$ provide tight bounds for $\alpha_k(G)$.  Here, we  study also  a relation between  $\vartheta_k(G)$ and $\chi_k(G)$,
and extend many known results for the Lov{\'a}sz  $\vartheta$-number to the generalized  $\vartheta$-number.  This paper is based on the thesis of  \citeauthor{sinjorgo2021MsCthesis}  \cite{sinjorgo2021MsCthesis}.

\subsection*{Main results and outline }
This paper provides various theoretical results  for $\alpha_k(G)$,  $\vartheta_k(G)$ and  $\chi_k(G)$.
We show numerous properties of $\vartheta_k(G)$ including results on different graph products of two graphs such as the Cartesian product, strong product and disjunction product.
We show that the sequence  $(\vartheta_k(G))_k$ is increasing towards the number of vertices in $G$, and that the increments of the sequence  can be arbitrarily small. The latter result is proven by constructing a particular graph that satisfies the desired property.
We also provide a closed form expression on the generalized $\vartheta$-number for several graph classes including  complete graphs, cycle graphs, complete multipartite graphs, strongly regular graphs, orthogonality graphs, the Kneser graphs and some Johnson graphs.
We  compute  $\vartheta_k(G)$ for circulant graphs and the Johnson graphs. Our results show that $\vartheta_k(G)=k \vartheta(G)$ for  the Kneser graphs and more general Johnson graphs,  strongly regular graphs, cycle graphs and  circulant graphs.  Our paper presents lower bounds on  the $k$-th chromatic number for all regular graphs, but also specialized bounds for the Hamming, Johnson and orthogonality graphs. We also provide bounds on the product and sum of $\chi_k({G})$ and $\chi_k(\overline{G})$, and present graphs for which those bounds are attained. Those results generalize well known results of \citeauthor{nordhaus1956complementary} \cite{nordhaus1956complementary} for  $\chi(G)$ and $\chi(\overline{G})$.

This paper is organized as follows. Notation and definitions of several graphs and graph products are given in section \ref{sect:NotationDefiniton}.  In section \ref{section:formulations} we formally introduce $\vartheta_k(G)$  and $\chi_k(G)$ and show how those graph parameters relate. In section \ref{sect:the_series}  we study the sequence $(\vartheta_k(G))_k$. Section \ref{sect:Graph products}  provides bounds for $\vartheta_k(G)$ when $G$ is the strong graph product of two graphs  and the disjunction product of two graphs.  In section \ref{Section:ValueOfThetaKForSomeGraphs} one can find values of the generalized $\vartheta$-number for  complete  graphs,   cycle graphs,  circulant graphs  and complete multipartite graphs. In section \ref{section:RegularGraphs} we provide a closed form expression for the generalized $\vartheta$-function on the Kneser graphs,  as well as for the Johnson graphs. Section \ref{Section:AnalysisOfCart} relates $\vartheta(K_k \square G)$ and $\vartheta_k(G)$. We provide a closed form expression for the generalized $\vartheta$-function for strongly regular graphs in section \ref{section:SRG}. In the same section we also relate  the Schrijver's number $\vartheta'(K_k \square G)$ with $\vartheta_k(G)$ when $G$ is a strongly regular graph.
In section \ref{section:OrthogonalityGraph} we study a relation between the orthogonality graphs and here considered graph parameters. Section \ref{sect:lower_bound_for_multicoloring} provides new lower bounds on the $k$-th chromatic number for regular graphs and triangular graphs. We present several results for the multichromatic number of the Hamming graphs in section \ref{section:HammingGraphs}.

\subsection{Notation and definitions} \label{sect:NotationDefiniton}
Let $\mathbb{S}^n$ be the space of symmetric $n$ by $n$ matrices. For matrix $X \in \mathbb{S}^n$, we write $X \succeq 0$ when $X$ is positive semidefinite. Entries of matrix $X$ are given by $X_{ij}$. The trace inner product for symmetric matrices is denoted $\langle X, Y \rangle = \Tr(XY)$. The Kronecker product of matrices is denoted by $ X \otimes Y$. By abuse of notation, we use the same symbol for the tensor product of graphs.
The matrix of all ones is denoted by $J$, while the identity matrix is denoted by $I$. We sometimes use subscripts to indicate the size of a matrix. Denote by $\vec{0}$ and $\vec{1}$ the vector of all zeroes and ones respectively.

For any graph $G = (V(G),E(G))$, we denote its adjacency matrix  by $A_G$, or simply $A$ when the context is clear.  Similarly, we use $V$ and $E$ to denote the vertex and edge set of $G$ when it is clear from the context. We assume that $|V|=n$, unless stated differently. The Laplacian matrix of a graph $G$ is denoted by $L_G$.   The complement graph of $G$, denoted by $\overline{G}$, is  defined as the graph such that $A_{\overline{G}} + A_G = J - I.$

For the eigenvalues of $X \in \mathbb{S}^n$, we follow $\lambda_1(X) \geq \lambda_2(X) \geq \ldots \geq \lambda_n(X)$, and denote by $\sigma(A)$  the spectrum of matrix $A$. That is,
$ \sigma(A) = \{ \lambda_1(A), \ldots, \lambda_n(A) \}.$
We denote the set $\{1,\ldots,n\}$ by $[n]$.
The `diag' operator maps an $n\times n$ matrix to the $n$-vector given by its diagonal. The adjoint operator of `diag' is denoted by `Diag'. 

In the rest of this section we provide several definitions. The first definition introduces several graph products, while the remaining definitions introduce different classes of graphs.
\begin{definition}[Graph products]
\label{Def:GraphProducts}
An arbitrary graph product of graphs $G_1 = (V_1, E_1)$ and $G_2 = (V_2, E_2)$ is denoted by $G_1 * G_2$, having as vertex set the Cartesian product $V_1 \times V_2$. Table \ref{Table:GraphProducts} shows when vertices $(v_1, v_2)$ and $(u_1, u_2)$ are adjacent in $G_1 * G_2$, for the lexicographic, tensor, Cartesian, strong and disjunction (\citeauthor{abdo2014total} \cite{abdo2014total}) graph products.
\begin{table}[h]
\centering
\begin{tabular}{ll@{}c@{}rl}
Graph product & \multicolumn{3}{c}{$G_1 *
G_2$} & Condition for $((v_1, v_2),(u_1, u_2)) \in E(G_1 * G_2)$ \\ \hline
 Lexicographic & $G_1$ & $\circ$ & $G_2$ &
 $(v_1, u_1) \in E_1$ \text{ or } $[v_1 = u_1$ \text{ and } $(v_2, u_2) \in E_2]$ \rule{0pt}{3ex} \\
Tensor & $G_1$ & $\otimes$ & $G_2$  & $(v_1, u_1) \in E_1$ and $(v_2, u_2) \in E_2$ \rule{0pt}{3ex} \\
Cartesian &  $G_1$ & $\square$ & $G_2$ & $[v_1 = u_1$ and $(v_2, u_2) \in E_2]$ or $[v_2 = u_2 \text{ and } (v_1,u_1) \in E_1]$ \rule{0pt}{3ex}   \\
Strong &  $G_1$ & $\boxtimes$ & $G_2$ & $((v_1, v_2),(u_1, u_2)) \in E(G_1 \square G_2) \cup E(G_1 \otimes G_2)$ \rule{0pt}{3ex} \\
Disjunction &  $G_1$ & $\vee$ & $G_2$ & $(v_1, u_1) \in E_1$ or $(v_2, u_2) \in E_2$ \rule{0pt}{3ex}
\end{tabular}
\caption{Graph products}
\label{Table:GraphProducts}
\end{table}
\end{definition}

In order to define the Hamming graphs, we first state the definition of the Hamming distance.
\begin{definition}[Hamming distance]
\label{Def:HammingDistance}
For two integer valued vectors $\vec{u}$ and $\vec{v}$, the Hamming distance between them, denoted by $d(\vec{u}, \vec{v})$, is the number of positions in which their entries differ.
\end{definition}

\begin{definition}[Hamming graph]
\label{Def:HammingGraph}
The Hamming graph $H(n, q, F)$ for $n, q \in \mathbb{N}$ and $F \subset \mathbb{N}$ has as vertices all the unique elements in $(\mathbb{Z}/q\mathbb{Z})^n$.  In the Hamming graph, vertices $u$ and $v$ are adjacent if their  Hamming distance $d(u,v) \in F$.
\end{definition}
Many authors define the Hamming graphs only for $F = \{ 1 \}$.

\begin{definition}[Johnson graph]
\label{Def:JohnsonGraphs}
Let $n,m \in \mathbb{N}$, $1\leq m \leq n/2$, $f \in \{0, 1, \ldots, m\}$ and  $N = [n]$. The Johnson graph $J(n,m,f)$ has as vertices all the possible $m$-sized subsets of $N$. Denote the subset corresponding to a vertex $u$ by $s(u)$. Then $|s(u)| = m$ and vertices $u$ and $v$ are adjacent if and only if $|s(u) \cap s(v)| = f$.
\end{definition}

Many authors define the Johnson graph only for $f =  m-1$. When $f =  0$, the Johnson graph is better known as the Kneser graph.

\begin{definition}[Kneser graph]
\label{Def:KneserGraph}
Let $n,m \in \mathbb{N}$ and $1\leq m \leq n/2$. Then the Kneser graph $K(n,m)$ is the Johnson graph $J(n,m,0)$.
\end{definition}

\begin{definition}[Strongly regular graph]
\label{Def:StronglyRegular}
A  $d$-regular graph $G$ of order $n$ is called  strongly regular with parameters $(n, d, \lambda, \mu)$ if any two adjacent vertices share $\lambda$ common neighbors and any two non-adjacent vertices share $\mu$ common neighbors.
\end{definition}

\section{$\vartheta_k(G)$  and $\chi_k(G)$  formulations and their relation} \label{section:formulations}

In this section, we formally introduce the  multichromatic number   and the generalized $\vartheta$-number of a graph. We also show a relationship between these two graph parameters.

Let $G = (V, E)$ be a simple undirected graph with $n$ vertices.
A valid $k$-multicoloring of $G$ that uses  $R$ colors is a mapping
$f: V \rightarrow 2^{R}$, such that $|f(i)| = k$ for all vertices $i\in V$ and $|f(i) \cap f(j)| = 0$ for all edges $(i,j) \in E$.
The  multichromatic  number $\chi_k(G)$ is defined to be the size of a smallest $R$ such that a valid $k$-multicoloring of $G$ exists.
Here we consider only valid $k$-multicoloring and refer to it as  $k$-multicoloring.

Multicoloring can be reduced to standard graph coloring by use of the lexicographic product of graphs, see
definition \ref{Def:GraphProducts}.
Namely, \citeauthor{stahl1976n} \cite{stahl1976n}  showed that for any graph $H$ such that $\chi(H) = k$, we have $\chi_k(G) = \chi(G \circ H)$. For clarity purposes, the simplest choice for $H$ is $K_k$, the complete graph of order $k$. This results
in
\begin{equation}
    \label{eqn:MultichromTransformation}
    \chi_k(G) = \chi(G \circ K_k).
\end{equation}

For bounds on the chromatic number of (lexicographic) graph products, we refer readers to \citeauthor{klavzar1996coloring} \cite{klavzar1996coloring} and \citeauthor{geller1975chromatic} \cite{geller1975chromatic}.
By the lexicographic product, any bound on $\chi(G)$ can also be transformed to a bound on $\chi_k(G)$. In particular:
\begin{equation}
    \label{eqn:ThetaKLouwerBound}
    \chi_k(G) = \chi(G \circ K_k) \geq \omega (G \circ K_k) = \omega(G) \omega(K_k) = k \omega(G).
\end{equation}
Here we use that $\omega(G \circ H) = \omega(G) \omega(H)$ for general graphs $G$ and $H$. We also mention the following result
\begin{equation}
    \label{eqn:IndepNumberLexiProd}
    \alpha(G \circ H) = \alpha(G) \alpha(H).
\end{equation}
Both results  are proven by  \citeauthor{geller1975chromatic} \cite{geller1975chromatic}.
Let us also state the following known result:
\begin{equation}
    \label{eqn:ThetaK_upperbound}
    \chi(G \circ H) \leq \chi(G) \chi(H).
\end{equation}
This result can be explained as follows. Denote the vertex sets of $G$ and $H$  by $V(G)$ and $V(H)$, respectively. For an optimal coloring of $G$ and $H$, define $c(u)$ as the color of some vertex $u$. Graph $G \circ H$ has vertices $(g_i, h_i)$. Every vertex in $G \circ H$ can then be assigned a 2-color combination $(c(g_i), c(h_i))$. Note that, by interpreting these 2-color combinations as simply colors, this constitutes a valid coloring of $G \circ H$ by using $\chi(G) \chi(H)$ colors. Combining inequalities \eqref{eqn:ThetaKLouwerBound} and \eqref{eqn:ThetaK_upperbound} results in
\begin{align}
    \label{eqn:MultichromBounded}
    k \omega(G) \leq \chi_k(G) = \chi(G \circ K_k) \leq k \chi(G).
\end{align}
The  above inequalities may be strict. An example is the cycle graph with five vertices and $k=2$, as $\chi_2(C_5) = 5$ \cite{stahl1976n}. Note that, by \eqref{eqn:MultichromBounded}, any upper bound on $\chi(G)$ can be transformed into an upper bound on $\chi_k(G)$.
To compute (or approximate) $\chi_k(G)$ one can consult the wide range of existing literature on standard graph coloring by using $\chi_k(G) = \chi(G \circ K_k)$. Next to that, more specific literature on multicoloring   can also be examined. \citeauthor{campelo2016lifted} \cite{campelo2016lifted}
present an integer linear programming formulation for the $k$--multicoloring of a graph and study the facial structure of the corresponding polytope. \citeauthor{malaguti2008evolutionary} \cite{malaguti2008evolutionary} use a combination of
tabu search and population management procedures as a metaheuristic to solve (slightly generalized) multicoloring problems. \citeauthor{mehrotra2007branch} \cite{mehrotra2007branch} apply branch and price to generate independent sets for solving the multicoloring problem.

\citeauthor{narasimhan1988generalization} \cite{narasimhan1988generalization} generalize $\vartheta(G)$ by introducing $\vartheta_k(G)$ as follows:
\begin{equation}
\label{eqn:ThetaK_eigenvalues}
\begin{aligned}
\vartheta_k(G) = & {\underset{A \in
\mathcal{A}(G)}{ \text{ Minimize}}}
& & \sum_{i=1}^{k}\lambda_i(A),
\end{aligned}
\end{equation}
where
\begin{equation}
    \label{eqn:AGset}
    \mathcal{A}(G) := \left \{ A \in  \mathbb{S}^n \, | \, A_{ij} = 1 \,~ \forall (i,j) \notin E(G) \right \}.
\end{equation}
\citeauthor{narasimhan1988generalization} prove that $\vartheta_k(G)$ satisfies the following inequality
\begin{equation}
\label{eqn:ThetaK_inequality}
    \alpha_k(\overline{G}) \leq \vartheta_k(\overline{G}) \leq \chi_k(G)
\end{equation}
and thus also
$\omega_k(G) \leq \vartheta_k(\overline{G})  \leq \chi_k(G)$.
Recall that $\alpha_k(G)$ is the cardinality of the largest subset $C \subseteq V$ such that the subgraph induced in $G$ by $C$, denoted $G[C]$, satisfies $\chi(G[C]) \leq k$.  Inequality \eqref{eqn:ThetaK_inequality} generalizes the Lov{\'a}sz's sandwich theorem \cite{LovaszBook}.

\citeauthor{Alizadeh} \cite{Alizadeh}
derived the following SDP formulation of $\vartheta_k(G)$, see also \cite{kuryatnikova2020maximum}:
\begin{equation}
\label{ThetaK_SDP}
\tag{$\vartheta_k$\textit{-SDP}}
\begin{aligned}
\vartheta_k(G) = & {\underset{\mu \in \mathbb{R}, \, X,Y \in
\mathbb{S}^n}{ \text{ Minimize}}}
& & \quad \langle I, Y \rangle + \mu k  \\
& \text{subject to} && X_{ij} =0\quad \forall (i,j) \notin E(G) \\
&&&  \mu I + X - J +Y \succeq 0, \ Y \succeq 0. \\
\end{aligned}
\end{equation}
The dual problem for $\vartheta_k$\textit{-SDP} is:
\begin{equation}
\label{eqn:ThetaK_SDP2}
\tag{$\vartheta_k$\textit{-SDP2}}
\begin{aligned}
\vartheta_k(G) = & {\underset{Y \in
\mathbb{S}^n}{ \text{ Maximize}}}
& & \quad \langle J, Y \rangle \\
& \text{subject to} && Y_{ij} =0\quad \forall (i,j)  \in E(G) \\
&& &  \langle I, Y \rangle = k, \ 0 \preceq Y \preceq I. \\
\end{aligned}
\end{equation}
Note that for $k=1$ constraint  $Y \preceq I$ is redundant.
We show below that $\vartheta_k(\overline{G}) \leq \chi_k(G)$. To prove the result we use different arguments than the arguments used in  \cite{narasimhan1988generalization}.
In an optimal $k$-multicoloring of $G$, define for each of the $\chi_k(G)$ colors used a vector $\vec{y}^j \in \{ 0, \, 1, \, k \}^{n+1}$, $1 \leq j \leq \chi_k(G)$. For the entries of $\vec{y}^j$, we have $\vec{y}^j_0=k$ and $\vec{y}^j_i=1$ if vertex $i$ has color $j$, 0 otherwise. Then
\begin{align*}
    \frac{1}{k^2} \sum_{j=1}^{\chi_k(G)} \vec{y}^j (\vec{y}^j)^\top = \begin{bmatrix} \chi_k(G) &  \vec{1}^\top \\
     \vec{1} & \frac{1}{k}I + \frac{1}{\chi_k(G)} X
    \end{bmatrix},
\end{align*}
for some $X \in \mathbb{S}^n$ satisfying $X_{ij} = 0$ for all $(i,j) \in E(G)$. By the Schur complement we find
$\frac{\chi_k(G)}{k}I + X - J \succeq 0.$
Simply set $Y = 0 \in \mathbb{S}^n$. Then the triple $(\frac{\chi_k(G)}{k},X,Y)$ is feasible for \ref{ThetaK_SDP} (for $\overline{G}$) with objective value $\chi_k(G)$.

To conclude this section we state the following result:
\begin{equation}
   \label{eqn:VarthetaIneq}
    \vartheta_k(\overline{G})  \leq k \vartheta(\overline{G})    \leq  \chi_k(G).
\end{equation}
\citeauthor{narasimhan1988generalization} \cite{narasimhan1988generalization} prove the first inequality in \eqref{eqn:VarthetaIneq}.
To show this, let $\widetilde{A} \in \mathcal{A}(\overline{G})$ such that $\lambda_1(\widetilde{A}) = \vartheta(\overline{G})$. Then $\vartheta_k(\overline{G}) \leq \sum \limits_{i=1} ^k \lambda_i(\widetilde{A}) \leq k \lambda_1(\widetilde{A})$ and the proof follows.
The second inequality follows from
$\vartheta(\overline{G \circ K_k}) = k\vartheta(\overline{G})$ and
$\vartheta(\overline{G \circ K_k}) \leq \chi({G \circ K_k})=\chi_k(G)$. The second inequality in \eqref{eqn:VarthetaIneq} also follows from the following known results $\vartheta(\overline{G}) \leq \chi_f(G)$  and  $k\chi_f(G) \leq \chi_k(G)$ where $\chi_f(G)$ is the fractional chromatic number of a graph, see e.g., \cite{campelo2013optimal}.
In this paper we show that  $\vartheta_k({G}) = k \vartheta({G})$ for many highly symmetric graphs.

\section{The sequence $(\vartheta_k(G))_k$} \label{sect:the_series}

In this section we consider the sequence  $\vartheta_1(G)$,  $\vartheta_2(G)$, \dots,  $\vartheta_n(G)$ where  $G$ is a graph of order $n$. We first prove that this sequence  is bounded from above (proposition \ref{Thm:ChromGeneralLB})  and increasing (proposition \ref{thm:ThetaKIncreasing}). Then, we prove that the increments of the sequence  i.e., $\vartheta_k(G) - \vartheta_{k-1}(G)$ are decreasing in $k$, see theorem \ref{thm:SecondOrderDiff}. We also show that this increment can be arbitrarily small for a particular graph, see theorem \ref{thm:DeltaKGraphName}.

Let us first establish a relation between $\vartheta_k(G)$ and $\chi(G)$.
\begin{proposition}
\label{Thm:ChromGeneralLB}
For $k \geq \chi(G)$, $G = (V, E)$, we have $\vartheta_k(G) = |V|$. Furthermore, $\vartheta_k(G) \leq \min\{ k\vartheta(G),  |V|\}$ for all $k\leq n$.
\end{proposition}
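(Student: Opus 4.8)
The plan is to treat the two assertions separately, dispatching every upper bound by plugging explicit feasible matrices into \eqref{eqn:ThetaK_eigenvalues}, and reserving the real work for the lower bound $\vartheta_k(G)\ge |V|$ when $k\ge\chi(G)$. For the upper bounds, observe first that $J\in\mathcal{A}(G)$, since its entries equal $1$ everywhere and in particular on all non-edges; as $\sigma(J)=\{|V|,0,\dots,0\}$ we get $\sum_{i=1}^k\lambda_i(J)=|V|$ for every $k\ge 1$, hence $\vartheta_k(G)\le |V|$ for all $k$. For $\vartheta_k(G)\le k\vartheta(G)$ I would reuse the argument already applied to the first inequality in \eqref{eqn:VarthetaIneq}: pick $A^\ast\in\mathcal{A}(G)$ with $\lambda_1(A^\ast)=\vartheta(G)$; since $A^\ast$ is feasible for \eqref{eqn:ThetaK_eigenvalues}, $\vartheta_k(G)\le\sum_{i=1}^k\lambda_i(A^\ast)\le k\lambda_1(A^\ast)=k\vartheta(G)$. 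Combining the two yields $\vartheta_k(G)\le\min\{k\vartheta(G),|V|\}$.

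The crux is $\vartheta_k(G)\ge |V|$ for $k\ge\chi(G)$, which together with $\vartheta_k(G)\le |V|$ forces equality. The key tool is Ky Fan's maximum principle, $\sum_{i=1}^k\lambda_i(A)=\max\{\langle A,P\rangle : 0\preceq P\preceq I,\ \Tr P=k\}$, so it suffices to exhibit, for each $A\in\mathcal{A}(G)$, a single feasible rank-$k$ projection $P$ with $\langle A,P\rangle=|V|$; as the diagonal and non-edge entries of $A$ are pinned to $1$, I will support $P$ precisely where $A$ is known. Fix a proper colouring of $G$ with $\chi:=\chi(G)$ classes $V_1,\dots,V_\chi$, which are independent sets partitioning $V$, and set $s_t=|V_t|$. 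Since every within-class pair and the diagonal are non-edges, the principal block $A[V_t]$ equals the all-ones matrix $J_{s_t}$. Take $P_0=\sum_{t=1}^\chi \tfrac{1}{s_t}\,\vec{1}_{V_t}\vec{1}_{V_t}^\top$, a rank-$\chi$ orthogonal projection with $\langle A,P_0\rangle=\sum_{t=1}^\chi \tfrac{1}{s_t}\,\vec{1}_{V_t}^\top A\,\vec{1}_{V_t}=\sum_{t=1}^\chi \tfrac{1}{s_t}\,s_t^2=|V|$.

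To raise $P_0$ to rank exactly $k$ I would adjoin $k-\chi$ further orthonormal directions, each supported inside a single class $V_t$ and orthogonal to $\vec{1}_{V_t}$; the space of such vectors has dimension $\sum_t(s_t-1)=n-\chi\ge k-\chi$, so enough exist precisely because $k\le n$, and each is automatically orthogonal to the range of $P_0$. For any such within-class vector $w$ one has $w^\top A w=w^\top A[V_t]w=(\vec{1}_{V_t}^\top w)^2=0$, so these directions add nothing to $\langle A,\cdot\rangle$. Letting $P$ be $P_0$ plus the projection onto them gives an orthogonal projection with $\Tr P=\chi+(k-\chi)=k$ and $\langle A,P\rangle=|V|$, whence $\sum_{i=1}^k\lambda_i(A)\ge|V|$ for every $A\in\mathcal{A}(G)$, i.e. $\vartheta_k(G)\ge|V|$. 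The step I expect to demand the most care is the trace/rank bookkeeping: one must check that the within-class mean-zero directions are mutually orthogonal and orthogonal to every $\vec{1}_{V_t}$, so that $P$ is a genuine projection of trace exactly $k$, and that at least $k-\chi$ of them are available — which is exactly where the hypothesis $k\le n$ is used. This route also avoids invoking monotonicity of the sequence $(\vartheta_k(G))_k$, which is established separately.
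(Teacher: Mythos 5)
Your proposal is correct, and the two upper bounds are handled exactly as in the paper ($J\in\mathcal{A}(G)$ for $\vartheta_k(G)\le|V|$, and the eigenvalue argument behind the first inequality of \eqref{eqn:VarthetaIneq} for $\vartheta_k(G)\le k\vartheta(G)$). Where you diverge is the lower bound: the paper simply observes that $\alpha_k(G)=|V|$ once $k\ge\chi(G)$ (the color classes serve as the $k$ independent sets) and then invokes the sandwich inequality \eqref{eqn:ThetaK_inequality}, $\alpha_k(G)\le\vartheta_k(G)$, which it cites from \cite{narasimhan1988generalization}. You instead re-derive the needed instance of that inequality from scratch: by Ky Fan's maximum principle it suffices to exhibit, for every $A\in\mathcal{A}(G)$, a trace-$k$ contraction $P$ with $\langle A,P\rangle=|V|$, and your $P$ built from the normalized indicators $\tfrac{1}{s_t}\vec{1}_{V_t}\vec{1}_{V_t}^\top$ of the color classes, padded with mean-zero within-class directions, does the job — the bookkeeping (mutual orthogonality of the added directions, availability of $k-\chi$ of them exactly when $k\le n$, and $w^\top A w=(\vec{1}_{V_t}^\top w)^2=0$) all checks out because $A$ restricted to an independent set is the all-ones block. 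The paper's route is shorter because it leans on a quoted result; yours is self-contained and makes visible the mechanism by which a $\chi(G)$-coloring certifies $\vartheta_k(G)\ge|V|$, at the cost of importing Ky Fan's principle (only the easy direction, $\langle A,P\rangle\le\sum_{i=1}^k\lambda_i(A)$, is actually needed). Both are valid proofs of the proposition.
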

\begin{proof}
Let $k \geq \chi(G)$. Then $\alpha_k(G) = |V|$, where we  take the $k$ independent sets to be the color classes in an optimal coloring of $G$.
Thus, it follows from \eqref{eqn:ThetaK_inequality}  that $|V|\leq \vartheta_k(G) $.

Furthermore, note that for any graph $G$, matrix $J \in \mathcal{A}(G)$  is feasible for
\eqref{eqn:ThetaK_eigenvalues}.
Since matrix $J$ has eigenvalue $|V|$ with multiplicity one and the other eigenvalues equal to 0, we have
 $\vartheta_k(G) \leq |V|$ for any graph $G$.
Therefore, when $k \geq \chi(G)$ we have $\vartheta_k(G) = |V|$. Besides, $\vartheta_k(G) \leq k \vartheta(G)$ by \eqref{eqn:VarthetaIneq}.
\end{proof}
Part of proposition \ref{Thm:ChromGeneralLB} can be more succinctly stated as
$\vartheta_{\chi(G)}(G) = |V|.$
The parameter $\vartheta_k(G)$ induces a sequence of parameters for a graph, given by $\vartheta_1(G), \vartheta_2(G), \ldots, \vartheta_n(G) = |V|$. Proposition \ref{Thm:ChromGeneralLB} shows that this sequence  is bounded from above by $|V|$. The next proposition shows that this sequence is non-decreasing in $k$.

\begin{proposition}
\label{thm:ThetaKIncreasing}
For any graph $G$, $\vartheta_k(G) \leq \vartheta_{k+1}(G)$, with equality if and only if $\vartheta_k(G) =|V|$.
\end{proposition}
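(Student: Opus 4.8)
The plan is to work with the dual semidefinite formulation \eqref{eqn:ThetaK_SDP2} of $\vartheta_k(G)$, since its feasible region $\{Y : 0 \preceq Y \preceq I,\ \langle I, Y\rangle = k,\ Y_{ij}=0\ \forall (i,j)\in E\}$ makes the monotonicity transparent through a single interpolation step. Write $n=|V|$; the claim only concerns $k \leq n-1$, as $\vartheta_{k+1}$ must be defined. The feasible region is nonempty (it contains $\tfrac{k}{n}I$) and compact, so an optimal $Y^\ast$ exists with $\langle J, Y^\ast\rangle = \vartheta_k(G)$.

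First I would build from $Y^\ast$ a feasible point for $\vartheta_{k+1}(G)$ by pushing $Y^\ast$ toward the identity. Set $t = \tfrac{1}{n-k} \in (0,1]$ and define $Y' = (1-t)Y^\ast + tI$. I claim $Y'$ is feasible for \eqref{eqn:ThetaK_SDP2} at level $k+1$: it is a convex combination of $Y^\ast$ and $I$, and since $0 \preceq Y^\ast \preceq I$ and $0 \preceq I \preceq I$, we get $0 \preceq Y' \preceq I$; for any edge $(i,j)$ we have $i \neq j$, hence $I_{ij}=0$ and $Y^\ast_{ij}=0$, giving $Y'_{ij}=0$; and $\langle I, Y'\rangle = (1-t)k + tn = k + t(n-k) = k+1$. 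The only point requiring care is $t \leq 1$, which is precisely $n-k \geq 1$, guaranteed by $k \leq n-1$, and this constraint is exactly what pins down the choice $t = \tfrac{1}{n-k}$.

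Next I would read off the objective value. By linearity, $\langle J, Y'\rangle = (1-t)\langle J, Y^\ast\rangle + t\langle J, I\rangle = (1-t)\vartheta_k(G) + tn$, using $\langle J, I\rangle = \Tr(J) = n$. Feasibility of $Y'$ then yields $\vartheta_{k+1}(G) \geq (1-t)\vartheta_k(G) + tn$. Invoking $\vartheta_k(G) \leq n$ from Proposition \ref{Thm:ChromGeneralLB}, the right-hand side equals $\vartheta_k(G) + t\big(n - \vartheta_k(G)\big) \geq \vartheta_k(G)$, which proves $\vartheta_k(G) \leq \vartheta_{k+1}(G)$. Since $t>0$, the bound is \emph{strict} whenever $\vartheta_k(G) < n$; hence $\vartheta_k(G) = \vartheta_{k+1}(G)$ forces $\vartheta_k(G) = n = |V|$.

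Finally I would close the equivalence in the reverse direction: if $\vartheta_k(G) = |V|$, then combining the monotonicity just proved with the upper bound $\vartheta_{k+1}(G) \leq |V|$ of Proposition \ref{Thm:ChromGeneralLB} gives $|V| = \vartheta_k(G) \leq \vartheta_{k+1}(G) \leq |V|$, so equality holds. I do not expect a genuine obstacle: the delicate spot is merely checking that the interpolation parameter stays in $[0,1]$ while the trace, edge-support, and Löwner constraints are preserved simultaneously, all of which follow from the explicit value $t = 1/(n-k)$.
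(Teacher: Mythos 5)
Your proof is correct and follows essentially the same route as the paper: both take an optimal $Y^\ast$ for \eqref{eqn:ThetaK_SDP2}, form the convex combination $\bigl(1-\tfrac{1}{n-k}\bigr)Y^\ast + \tfrac{1}{n-k}I$, verify it is feasible at level $k+1$, and read off $\vartheta_{k+1}(G) \geq \vartheta_k(G) + \tfrac{n-\vartheta_k(G)}{n-k} \geq \vartheta_k(G)$. Your treatment of the equality clause (strictness when $\vartheta_k(G) < n$, plus the converse via Proposition~\ref{Thm:ChromGeneralLB}) is spelled out more explicitly than in the paper, but it is the same argument.
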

\begin{proof}
By proposition \ref{Thm:ChromGeneralLB}, it is enough to consider $k < n$. Consider graph $G$ of order $n$ and let $Y$ be optimal for \ref{eqn:ThetaK_SDP2}. We have $\Tr(Y) =k$ and $0 \preceq Y \preceq I$. Define matrix $Z$ as follows: $Z := \Big( 1 - \frac{1}{n-k} \Big) Y + \frac{1}{n-k} I.$ It follows that matrix $Z$ is feasible for $\vartheta_{k+1}$\textit{-SDP2} and thus
\begin{equation*}
    \vartheta_{k+1}(G) \geq \langle J, Z \rangle = \vartheta_k(G) + \frac{n - \vartheta_k(G)}{n-k} \geq \vartheta_k(G).
\end{equation*}
\end{proof}

Proposition \ref{thm:ThetaKIncreasing} allows us to further restrict \ref{ThetaK_SDP}.

\begin{proposition}
\label{prop:PositiveMu}
Let $(X^*,Y^*, \mu^*)$ be an optimal solution to \ref{ThetaK_SDP} for an arbitrary graph $G$. Then $\mu^* \geq 0$.
\end{proposition}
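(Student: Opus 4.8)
The plan is to exploit the crucial structural feature of \ref{ThetaK_SDP}: its feasible region does not depend on $k$. Indeed, the constraints $X_{ij}=0$ for all $(i,j)\notin E(G)$, together with $\mu I + X - J + Y \succeq 0$ and $Y\succeq 0$, involve only the triple $(\mu,X,Y)$, while $k$ enters exclusively through the linear term $\mu k$ in the objective $\langle I,Y\rangle + \mu k$. Consequently, any triple that is feasible for the $k$-th program is automatically feasible for the $(k+1)$-th program, and only its objective value changes. This observation is the whole engine of the proof.

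Concretely, I would take $(X^*,Y^*,\mu^*)$ optimal for \ref{ThetaK_SDP} and evaluate the objective of the $\vartheta_{k+1}$-program at this same triple. Since $\langle I,Y^*\rangle + \mu^* k = \vartheta_k(G)$, the objective of the $(k+1)$-version at $(X^*,Y^*,\mu^*)$ equals
\[
\langle I,Y^*\rangle + \mu^*(k+1) = \vartheta_k(G) + \mu^*.
\]
Because this triple is feasible for the $(k+1)$-program, which is a minimization, I obtain the upper bound $\vartheta_{k+1}(G) \le \vartheta_k(G) + \mu^*$.

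The second ingredient is monotonicity, already available from Proposition \ref{thm:ThetaKIncreasing}: the sequence $(\vartheta_k(G))_k$ is non-decreasing, so $\vartheta_{k+1}(G) \ge \vartheta_k(G)$. Chaining this with the upper bound just derived yields $\vartheta_k(G) \le \vartheta_{k+1}(G) \le \vartheta_k(G) + \mu^*$, whence $\mu^* \ge 0$, which is precisely the claim.

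I expect no serious obstacle here, since the argument is essentially a one-line comparison once the $k$-independence of the feasible set is noticed; the only point requiring care is the boundary case $k=n$, where $\vartheta_{k+1}$ is not covered by the eigenvalue-sum definition nor by Proposition \ref{thm:ThetaKIncreasing} as stated. This case is degenerate, however: since $n \ge \chi(G)$, Proposition \ref{Thm:ChromGeneralLB} gives $\vartheta_n(G)=|V|$, so I would either dispatch it directly or simply restrict attention to $k<n$, which is the range relevant for tightening \ref{ThetaK_SDP}.
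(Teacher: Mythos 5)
Your proof is correct and is essentially the paper's argument: both rest on the observation that the feasible set of \ref{ThetaK_SDP} is independent of $k$, so the optimal triple for $k$ is feasible for $k+1$ with objective $\vartheta_k(G)+\mu^*$, and monotonicity from Proposition \ref{thm:ThetaKIncreasing} then forces $\mu^*\geq 0$. The only difference is cosmetic — you argue directly where the paper argues by contradiction — plus your (reasonable) aside about the boundary case $k=n$.
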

\begin{proof} We prove the statement by contradiction. Assume that the triple  $(X^*,Y^*, \mu^*)$ is optimal for \ref{ThetaK_SDP} and $\mu^* <0$. Note that the triple $(X^*,Y^*, \mu^*)$ is then also feasible for $\vartheta_{k+1}$\textit{-SDP}. Since $\mu^* < 0$, this would imply that $\vartheta_k(G) > \vartheta_{k+1}(G)$, which contradicts proposition \ref{thm:ThetaKIncreasing}. Thus $\mu^* \geq 0$.
\end{proof}

Next, we investigate the increments of the sequence  $(\vartheta_k(G))_k$. For that purpose, we define for any graph $G$ and $k \geq 2$ the increment of $(\vartheta_k(G))_k$ as follows:
\begin{equation}
    \label{eqn:DeltaDefinition}
    \Delta_k(G) := \vartheta_k(G) - \vartheta_{k-1}(G),
\end{equation}
and set $\Delta_1(G) = \vartheta_1(G)$.
\begin{theorem}
\label{thm:SecondOrderDiff}
For any graph $G$ and $k \geq 1$, $\Delta_k(G) \geq \Delta_{k+1}(G)$.
\end{theorem}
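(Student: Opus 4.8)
The plan is to reduce the statement to a discrete concavity inequality and then exploit the fact that the feasible set of the eigenvalue formulation \eqref{eqn:ThetaK_eigenvalues} does not depend on $k$. First I would rewrite the claim. By the definition \eqref{eqn:DeltaDefinition}, for $k \geq 2$ the inequality $\Delta_k(G) \geq \Delta_{k+1}(G)$ is equivalent to $\vartheta_k(G) - \vartheta_{k-1}(G) \geq \vartheta_{k+1}(G) - \vartheta_k(G)$, i.e.\ to the midpoint-concavity inequality $2\vartheta_k(G) \geq \vartheta_{k-1}(G) + \vartheta_{k+1}(G)$. For $k = 1$, recall that $\Delta_1(G) = \vartheta_1(G)$, so the claim $\Delta_1(G) \geq \Delta_2(G)$ reads $2\vartheta_1(G) \geq \vartheta_2(G)$. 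I would unify the two cases by adopting the convention $\vartheta_0(G) = 0$, which is consistent both with $\Delta_1(G) = \vartheta_1(G)$ and with \eqref{eqn:ThetaK_eigenvalues}, since the empty sum $\sum_{i=1}^0 \lambda_i(A)$ is zero. Under this convention it suffices to prove $2\vartheta_k(G) \geq \vartheta_{k-1}(G) + \vartheta_{k+1}(G)$ for every $k \geq 1$.

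The heart of the argument is that the feasible set $\mathcal{A}(G)$ in \eqref{eqn:AGset} is independent of $k$, so a matrix optimal for one value of $k$ is automatically feasible for all of them. I would let $A^\ast \in \mathcal{A}(G)$ attain the minimum in \eqref{eqn:ThetaK_eigenvalues}, so that $\vartheta_k(G) = \sum_{i=1}^k \lambda_i(A^\ast)$. Since $A^\ast$ is also feasible for the problems defining $\vartheta_{k-1}(G)$ and $\vartheta_{k+1}(G)$, I immediately obtain the two one-sided bounds $\vartheta_{k-1}(G) \leq \sum_{i=1}^{k-1}\lambda_i(A^\ast)$ and $\vartheta_{k+1}(G) \leq \sum_{i=1}^{k+1}\lambda_i(A^\ast)$, with no further construction required.

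Next I would add these two inequalities and simplify using the ordering of the eigenvalues. Writing $S_m = \sum_{i=1}^m \lambda_i(A^\ast)$, one has $S_{k-1} + S_{k+1} = 2S_k - \bigl(\lambda_k(A^\ast) - \lambda_{k+1}(A^\ast)\bigr)$, and since $\lambda_k(A^\ast) \geq \lambda_{k+1}(A^\ast)$ the subtracted term is nonnegative. Hence $\vartheta_{k-1}(G) + \vartheta_{k+1}(G) \leq S_{k-1} + S_{k+1} \leq 2S_k = 2\vartheta_k(G)$, which is exactly the desired inequality.

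I do not expect any genuine obstacle here; the argument is short precisely because the minimization set is the same for all $k$, so one reuses the optimizer of $\vartheta_k(G)$ in the neighbouring problems rather than building a new matrix. The only two points requiring care are the boundary case $k=1$, which the convention $\vartheta_0(G)=0$ dispatches cleanly, and the attainment of the minimum in \eqref{eqn:ThetaK_eigenvalues}. If one prefers not to assume attainment, the same chain of inequalities works verbatim after replacing $A^\ast$ by a near-optimal $A \in \mathcal{A}(G)$ with $\sum_{i=1}^k \lambda_i(A) \leq \vartheta_k(G) + \varepsilon$ and letting $\varepsilon \downarrow 0$.
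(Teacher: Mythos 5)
Your proof is correct and is essentially the paper's own argument in a different packaging: the paper also takes a single optimizer $A_k$ of \eqref{eqn:ThetaK_eigenvalues}, uses its feasibility for the neighbouring problems to get $\Delta_k(G) \geq \lambda_k(A_k)$ and $\Delta_{k+1}(G) \leq \lambda_{k+1}(A_k)$, and concludes from $\lambda_k(A_k) \geq \lambda_{k+1}(A_k)$ — which is exactly your identity $S_{k-1}+S_{k+1} = 2S_k - (\lambda_k - \lambda_{k+1})$ rearranged. The only cosmetic difference is that the paper treats $k=1$ separately via \eqref{eqn:VarthetaIneq} rather than through the convention $\vartheta_0(G)=0$.
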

\begin{proof}
Let $k \geq 1$ and matrix $A_k \in \mathcal{A}(G)$, where $\mathcal{A}(G)$ is defined in \eqref{eqn:AGset}, satisfy
\begin{equation}
    \label{eqn:SecondDiff1}
    \sum_{i=1}^k \lambda_i(A_k) = \vartheta_k(G).
\end{equation}
Stated differently, matrix $A_k$ is an optimal solution to \eqref{eqn:ThetaK_eigenvalues} for computing $\vartheta_k(G)$. Since \eqref{eqn:ThetaK_eigenvalues} is a minimization problem,
\begin{equation}
\label{eqn:SecondDiff2}
\vartheta_k(G) \leq  \sum_{i=1}^k \lambda_i(A_{k'}), \, k' \neq k.
\end{equation}
By  substituting \eqref{eqn:SecondDiff1} and \eqref{eqn:SecondDiff2} in the definition of $\Delta_k(G)$ for $k \geq 2$, see \eqref{eqn:DeltaDefinition}, we obtain:
\begin{equation}
    \label{eqn:SecondDiff3}
    \Delta_k(G) \leq \sum_{i=1}^k \lambda_i(A_{k-1}) - \sum_{i=1}^{k-1} \lambda_i(A_{k-1}) = \lambda_k(A_{k-1}).
\end{equation}
Similarly,
\begin{equation}
    \label{eqn:SecondDiff4}
    \Delta_k(G) \geq \sum_{i=1}^k \lambda_i(A_k) - \sum_{i=1}^{k-1} \lambda_i(A_k) = \lambda_k(A_k).
\end{equation}
Combining \eqref{eqn:SecondDiff3} and \eqref{eqn:SecondDiff4} yields $\Delta_k(G) \geq \lambda_k(A_k) \geq \lambda_{k+1}(A_k) \geq \Delta_{k+1}(G), \, ~k \geq 2.$ The inequality $\Delta_1(G) \geq \Delta_2(G)$ follows from \eqref{eqn:VarthetaIneq}.
\end{proof}
Let us summarize the implications of propositions \ref{thm:ThetaKIncreasing} and theorem \ref{thm:SecondOrderDiff}. Proposition \ref{thm:ThetaKIncreasing} proves that
\begin{equation}
    \label{eqn:DeltaKeqZero}
    \Delta_k(G) = 0 \iff \vartheta_{k-1}(G) = |V|.
\end{equation}
For complete graphs we have $\Delta_k(K_n) = 1$, see theorem \ref{Thm:ThetaK_comple}. There exist however graphs for which $\Delta_k(G) < 1$. We investigate the limiting behaviour of $\Delta_k(G)$ in section \ref{section:Delta_k(G)}.

 When we consider the sequence  induced by $\vartheta_k(G)$ as a function of $k$, we know that this sequence  is increasing towards $|V(G)|$. Theorem \ref{thm:SecondOrderDiff} shows that the increments in this sequence decrease in $k$. Loosely speaking, one might say the second derivative of $f(k) = \vartheta_k(G)$ is negative.

\subsection{Limiting behaviour  of $\Delta_k(G)$}
\label{section:Delta_k(G)}
In this section we show that, for any real number $\varepsilon > 0$, there exists a graph $G$ and a number $k\geq 1$ such that $0 < \Delta_k(G) < \varepsilon.$ For this purpose, define graph $\GraphName = (V (\GraphName), E(\GraphName))$ as follows:
\begin{equation}
    \label{eqn:SpecialGraphDef}
    V(\GraphName) := [n] \text{ and } E(\GraphName) := \{ (i,j) \, | \, i < j \leq n-1 \} \cup \{ (n-1,n) \}.
\end{equation}
Graph $\GraphName$ is thus a complete graph on $n-1$ vertices plus one additional vertex. This additional vertex is connected to the complete graph $K_{n-1}$ by a single edge.

\begin{theorem}
\label{thm:DeltaKGraphName}
For $n \geq 5$, we have $\vartheta_{n-2}(\GraphName) = n-2+\frac{2}{n-3} \sqrt{(n-2)(n-4)}.$
\end{theorem}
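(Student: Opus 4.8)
The plan is to compute $\vartheta_{n-2}(\GraphName)$ directly from the dual formulation \eqref{eqn:ThetaK_SDP2} with $k=n-2$, exploiting the large automorphism group of $\GraphName$. Writing $m := n-2$, the only non-edges of $\GraphName$ are the pairs $(i,n)$ with $1\le i\le m$ (together with the diagonal), so any $Y$ feasible for \eqref{eqn:ThetaK_SDP2} is forced to vanish off the diagonal except possibly in the entries $Y_{in}=Y_{ni}$, $i\le m$. Every permutation of $\{1,\dots,m\}$ fixing $n-1$ and $n$ is an automorphism of $\GraphName$; since \eqref{eqn:ThetaK_SDP2} has a convex, symmetric feasible set and a symmetric linear objective, averaging an optimal $Y$ over this group produces an optimal solution invariant under it. Such an invariant matrix is described by four scalars: $Y_{ii}=s$ for $i\le m$, $Y_{n-1,n-1}=t$, $Y_{nn}=r$, and $Y_{in}=w$ for $i\le m$, with all other off-diagonal entries zero (the entries $Y_{ij}$ with $i,j\le m$ and $Y_{n-1,n}$ are zero by the edge constraint).

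Next I would diagonalize this $Y$. On the $(m-1)$-dimensional space of vectors supported on $\{1,\dots,m\}$ and orthogonal to $\vec{1}$, $Y$ acts as $sI$; the standard basis vector $e_{n-1}$ is an eigenvector with eigenvalue $t$; and on the span of $\tfrac{1}{\sqrt{m}}(\vec{1},0,0)$ and $e_n$ the matrix reduces to $\left(\begin{smallmatrix} s & w\sqrt{m}\\ w\sqrt{m} & r\end{smallmatrix}\right)$. Consequently $0\preceq Y\preceq I$ is equivalent to $s,t,r\in[0,1]$ together with $sr\ge mw^2$ and $(1-s)(1-r)\ge mw^2$. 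The normalization $\langle I,Y\rangle = m$ reads $ms+t+r=m$, and substituting it into the objective gives $\langle J,Y\rangle = ms+t+r+2mw = m + 2mw$. Hence $\vartheta_{n-2}(\GraphName) = m + 2m\,w^{\ast}$, where $w^{\ast}$ is the largest value of $w$ for which the constraints are consistent; equivalently $m(w^{\ast})^2 = \max \min\{sr,(1-s)(1-r)\}$ subject to $s,t,r\in[0,1]$ and $ms+t+r=m$.

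The remaining scalar optimization is where the real work lies and is the step I expect to be the main obstacle. Eliminating $t=m(1-s)-r$ turns $t\in[0,1]$ into $m(1-s)-1\le r\le m(1-s)$, and the unconstrained optimum $s=r=\tfrac12$ (value $\tfrac14$) becomes infeasible once $m\ge4$, so the box constraint on $t$ binds and a short case analysis is needed. Fixing $s$ and optimizing the one-variable function $\min\{sr,(1-s)(1-r)\}$ over the admissible $r$, I would show the maximum occurs with $t=1$ on the balanced locus $sr=(1-s)(1-r)$, at $s=\tfrac{m-2}{m-1}$, $r=\tfrac{1}{m-1}$, yielding $m(w^{\ast})^2=\tfrac{m-2}{(m-1)^2}$. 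A final check confirms this symmetric $Y$ is genuinely feasible (the two determinant inequalities hold with equality, all diagonal values lie in $[0,1]$), so the value is attained. Substituting $w^{\ast}=\tfrac{1}{\sqrt{m}}\cdot\tfrac{\sqrt{m-2}}{m-1}$ into $m+2mw$ gives $m + \tfrac{2\sqrt{m(m-2)}}{m-1}$, which with $m=n-2$ is exactly $(n-2)+\tfrac{2}{n-3}\sqrt{(n-2)(n-4)}$, as claimed.
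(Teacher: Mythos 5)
Your argument is correct, and it reaches the result by a genuinely different route from the paper in its second half. The paper proves the lower bound by exhibiting a feasible $Y$ for $\vartheta_{n-2}$\textit{-SDP2} and checking $0\preceq Y\preceq I$ via Schur complements, and then proves a \emph{matching upper bound} by constructing an explicit matrix $A\in\mathcal{A}(\GraphName)$ for the eigenvalue formulation \eqref{eqn:ThetaK_eigenvalues} and computing its full spectrum. You instead work entirely in the dual: you symmetrize over the automorphisms permuting $\{1,\dots,n-2\}$ (legitimate, since the feasible set is compact, convex and group-invariant and the objective is linear and invariant), reduce to the four-parameter family $(s,t,r,w)$, and solve the resulting scalar program exactly — so the upper bound comes for free from the reduction rather than from a separately guessed primal certificate. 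Your optimal point $s=\tfrac{m-2}{m-1}$, $t=1$, $r=\tfrac{1}{m-1}$, $w=\tfrac{1}{\sqrt m}\tfrac{\sqrt{m-2}}{m-1}$ is in fact exactly the matrix $Y$ the paper writes down (with $m=n-2$), which is a good consistency check. The one step you leave as a plan — that the maximum of $\min\{sr,(1-s)(1-r)\}$ under $ms+t+r=m$, $s,t,r\in[0,1]$ occurs at $t=1$ on the locus $s+r=1$ — does go through: the balanced locus $sr=(1-s)(1-r)$ is precisely $s+r=1$, along which the value $s(1-s)$ is maximized at the smallest feasible $s$, namely $s=\tfrac{m-2}{m-1}$ (forced by $t\le 1$), and for $s<\tfrac{m-2}{m-1}$ the constrained one-variable maximum over $r$ equals $(1-s)\bigl(2-m(1-s)\bigr)$, which is increasing in $s$ there and matches $\tfrac{m-2}{(m-1)^2}$ only in the limit. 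Your approach is arguably more systematic (it is the same symmetrization device the paper uses for theorems \ref{Thm:OddCycleTheta2} and \ref{Thm:EdgeTransitiveThetaK}); what the paper's route buys is an explicit optimal solution of the primal eigenvalue program, which is reused in the proof of proposition \ref{prop:Large gap}.
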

\begin{proof}
We prove the theorem by finding a lower and upper bound on $\vartheta_{n-2}(\GraphName)$, both of which equal the expression stated in theorem \ref{thm:DeltaKGraphName}. Let $p = \sqrt{\frac{n-4}{(n-2)(n-3)^2}}.$ Define matrix $Y \in \mathbb{S}^n$ as follows:
\begin{equation*}
    Y = \begin{bmatrix}
    \frac{n-4}{n-3} I_{n-2} & \vec{0}_{n-2} & p \vec{1}_{n-2}   \\[1ex]
    \vec{0}_{n-2}^\top & 1 & 0 \\[1ex]
    p \vec{1}_{n-2} ^\top  & 0 & \frac{1}{n-3}
    \end{bmatrix}.
\end{equation*}
Matrix $Y$ is feasible for $\vartheta_{n-2}$\textit{-SDP2} if $0 \preceq Y \preceq I$.
Therefore we  derive
\begin{equation*}
    I-Y = \begin{bmatrix}
    \frac{1}{n-3}I_{n-2} & \vec{0}_{n-2} & -p \vec{1}_{n-2}  \\[1ex]
    \vec{0}_{n-2}^\top & 0 & 0 \\[1ex]
    - p \vec{1}_{n-2} ^\top & 0 & \frac{n-4}{n-3}
    \end{bmatrix},
\end{equation*}
and  take the Schur complement of the block $\frac{1}{n-3}I_{n-2}$ of $I-Y$:
\begin{equation*}
    \begin{bmatrix}
    0 & 0 \\[1ex]
    0 & \frac{n-4}{n-3}
    \end{bmatrix}
    - \begin{bmatrix}
    \vec{0}_{n-2}^\top \\[1ex]
    p \vec{1}_{n-2}^\top
    \end{bmatrix} (n-3)I_{n-2}
    \begin{bmatrix}
    \vec{0}_{n-2} & p \vec{1}_{n-2}
    \end{bmatrix} = \begin{bmatrix}
    0 & 0 \\ 0 & 0
    \end{bmatrix} \succeq 0.
\end{equation*}
Thus $Y \preceq I$. Similarly, by taking the Schur complement of the upper left $(n-1) \times (n-1)$ block matrix  of $Y$, we find that $Y \succeq 0$. We omit the details of this computation. This implies that $Y$ is feasible for $\vartheta_{n-2}$\textit{-SDP2} and
\begin{equation}
    \label{eqn:DeltaKproof4}
    \vartheta_{n-2}(\GraphName) \geq \langle J, Y \rangle = n-2 + \frac{2}{n-3} \sqrt{(n-2)(n-4)}.
\end{equation}
Finding the (equal) upper bound on $\vartheta_{n-2}(\GraphName)$ is a bit more involved. Let $\alpha = \frac{n-5}{n-3} \sqrt{\frac{n-2}{n-4}}$ and set
\begin{equation*}
    A = \begin{bmatrix}
    \alpha  J_{n-2}+(1-\alpha) I_{n-2} & \vec{0}_{n-2} & \vec{1}_{n-2} \\
    \vec{0}_{n-2}^\top & 1 & 0 \\
    \vec{1}_{n-2}^\top & 0 & 1
    \end{bmatrix}.
\end{equation*}
Note that $A \in \mathcal{A}(\GraphName)$, see \eqref{eqn:AGset}. We show that for
\begin{align}
    \label{eqn:DeltaKproof1}
    \beta_{1,2} &= \frac{-\alpha(n-3) \pm \sqrt{\alpha^2(n-3)^2-4(2-n)}}{2},
\end{align}
the vectors
$ v_i = [ \vec{1}_{n-2}^\top,0,\beta_i  ]^\top$, $i \in \{1, 2\}$ are two eigenvectors of matrix $A$. Consider
\begin{equation}
    \label{eqn:DeltaKproof3}
    A v_i = \begin{bmatrix}
    \big( \alpha (n-3)+1+\beta_i \big) \vec{1}_{n-2} \\[1ex]
    0 \\[1ex]
    \big( \frac{n-2}{\beta_i}+1 \big) \beta_i
    \end{bmatrix}.
\end{equation}
By \eqref{eqn:DeltaKproof1} we have that $\beta_i$, $i \in \{1, \, 2\}$ are the roots of the equation $\beta^2 + \alpha (n-3)\beta+(2-n)=0$, and so $\beta_i + \alpha(n-3) = (n-2)/ \beta_i$. Then, the right-hand side of \eqref{eqn:DeltaKproof3} equals $v_i$ scaled by the corresponding eigenvalue, which is given by the following equation:
\begin{equation}
    \label{eqn:DeltaKproof2}
    \alpha(n-3)+1+\beta_i = \frac{n-2}{\beta_i}+1.
\end{equation}
Also $u=[\vec{0}_{n-2}^\top, 1,0 ]^\top$ is an eigenvector of $A$ with corresponding eigenvalue one (and multiplicity one). Since $A$ is a real symmetric matrix, its eigenvectors are orthogonal. The remaining eigenvectors are thus $w_i=[c_i^\top,0,0]^\top$ where $c_i \in \mathbb{R}^{n-2}$ is a vector whose entries sum to 0. The eigenvectors $w_i$ correspond to eigenvalues of $1-\alpha$. We have described all eigenvectors of $A$. By substituting  \eqref{eqn:DeltaKproof1} in \eqref{eqn:DeltaKproof2} one can verify that the four unique eigenvalues of $A$ are ordered as follows:
\begin{equation*}
    \sqrt{(n-2)(n-4)} + 1 > 1 > 1 - \alpha > 1 - \sqrt{\frac{n-2}{n-4}},
\end{equation*}
with corresponding multiplicities $1$, $1$, $n-3$, $1$, respectively. The sum of the largest $n-2$ eigenvalues of $A$ serves as upper bound on $\vartheta_{n-2}(\GraphName)$, see \eqref{eqn:ThetaK_eigenvalues}. That is,
\begin{align*}
    \vartheta_{n-2}(\GraphName) &\leq \sum_{i=1}^{n-2} \lambda_i(A)= n-2 + \frac{2}{n-3} \sqrt{(n-2)(n-4)}.
\end{align*}
This upper bound on $\vartheta_{n-2}(\GraphName)$ coincides with the lower bound \eqref{eqn:DeltaKproof4}, which proves the theorem.
\end{proof}
Using theorem \ref{thm:DeltaKGraphName} we can show that $\Delta_{n-1}(\GraphName)$ $(n \geq 5)$ converges to zero. Namely,
\begin{equation*}
\Delta_{n-1}(\GraphName) = \vartheta_{n-1}(\GraphName) - \vartheta_{n-2}(\GraphName) \leq 2 \Bigg(1-\sqrt{\frac{(n-2)(n-4)}{(n-3)^2}} \Bigg),
\end{equation*}
from where it follows that  $\Delta_{n-1}(\GraphName)$ $(n \geq 5)$ converges to zero. To conclude, strictly positive values of $\Delta_k(G)$ can be arbitrarily small. It is unclear whether lower bounds exist on $\Delta_k(G)$ for fixed $k$. One example of such a bound is simple for $k=1$ i.e., $\Delta_1(G) = \vartheta_1(G) \geq \alpha(G) \geq 1.$

\section{Graph products and the generalized  $\vartheta$-number} \label{sect:Graph products}

In this section we present bounds for $\vartheta_k(G)$ when $G$ is
the strong product of two graphs (theorem \ref{Thm:StrongProdThetaK}) and the  disjunction  product of two graphs (theorem \ref{thm:GraphProdDisjunction}).

In \cite{lovasz1979shannon}, \citeauthor{lovasz1979shannon} proved the following result:
\begin{equation}  \label{eqn:LovaszThetaStrongProductEq}
\vartheta(G_1 \boxtimes G_2) = \vartheta(G_1) \vartheta(G_2),
\end{equation}
where $G_1 \boxtimes G_2$ is the strong product of $G_1$ and $G_2$, see definition \ref{Def:GraphProducts}.
Since $G_1 \boxtimes K_k$ is isomorphic to $G_1 \circ K_k$ and $\vartheta(K_k)=1$
we have that
\[
\vartheta(G \circ K_k) = \vartheta(G \boxtimes K_k) = \vartheta(G) \leq \vartheta_k(G),
\]
see also proposition \ref{thm:ThetaKIncreasing}.
Below, we generalize the result for $\vartheta(G_1 \boxtimes G_2)$ to $\vartheta_k(G_1 \boxtimes G_2)$. For that purpose we need the following well known result.
\begin{lemma}
\label{Lemma:KronEigenValues}
For square matrices $A$ and $B$ with eigenvalues $\lambda_i$ and $\mu_j$ respectively, the eigenvalues of $A \otimes B$ equal $\lambda_i \mu_j$, and so $\Tr(A \otimes B) = \Tr(A) \Tr(B)$.
\end{lemma}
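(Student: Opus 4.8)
The plan is to prove the eigenvalue statement via Schur triangularization, which handles arbitrary (not necessarily diagonalizable) square matrices in one stroke, and then read off the trace identity. The single tool I will rely on throughout is the \emph{mixed-product property} of the Kronecker product, namely $(A \otimes B)(C \otimes D) = (AC) \otimes (BD)$ whenever the ordinary products $AC$ and $BD$ are defined.

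First I would invoke Schur's theorem to write $A = U T_A U^*$ and $B = W T_B W^*$, where $U$ and $W$ are unitary and $T_A$, $T_B$ are upper triangular with the eigenvalues $\lambda_i$ and $\mu_j$ on their respective diagonals. Applying the mixed-product property twice gives
\begin{equation*}
A \otimes B = (U T_A U^*) \otimes (W T_B W^*) = (U \otimes W)(T_A \otimes T_B)(U \otimes W)^*.
\end{equation*}
The matrix $U \otimes W$ is unitary, since $(U \otimes W)(U \otimes W)^* = (UU^*) \otimes (WW^*) = I \otimes I = I$ by the same property. Hence $A \otimes B$ is unitarily similar to $T_A \otimes T_B$.

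Next I would observe that the Kronecker product of two upper-triangular matrices is again upper triangular, and that its diagonal entries are precisely the products of the diagonal entries of the factors, i.e.\ the numbers $\lambda_i \mu_j$. Since the eigenvalues of a triangular matrix are its diagonal entries, and unitary similarity preserves the spectrum with multiplicity, the eigenvalues of $A \otimes B$ are exactly the products $\lambda_i \mu_j$. The trace identity then follows immediately, either by summing these eigenvalues,
\begin{equation*}
\Tr(A \otimes B) = \sum_{i,j} \lambda_i \mu_j = \Big(\sum_i \lambda_i\Big)\Big(\sum_j \mu_j\Big) = \Tr(A)\Tr(B),
\end{equation*}
or, more directly, by noting that the $i$-th diagonal block of $A \otimes B$ equals $A_{ii} B$, so that $\Tr(A \otimes B) = \sum_i A_{ii}\Tr(B) = \Tr(A)\Tr(B)$.

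There is no genuine obstacle here, as this is a classical fact; the only point requiring care is the choice of proof technique. The tempting shortcut---checking that $v \otimes w$ is an eigenvector of $A \otimes B$ with eigenvalue $\lambda\mu$ whenever $Av = \lambda v$ and $Bw = \mu w$---shows that each $\lambda_i \mu_j$ is an eigenvalue but does not, on its own, establish that these exhaust the spectrum with the correct multiplicities unless $A$ and $B$ are diagonalizable. Routing the argument through Schur triangularization avoids this gap and covers all square matrices uniformly.
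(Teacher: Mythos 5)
Your proof is correct and complete. The paper does not prove this lemma at all --- it simply cites Horn and Johnson's \emph{Topics in Matrix Analysis} as a reference --- so there is no in-paper argument to compare against; your Schur-triangularization route via the mixed-product property is precisely the standard proof found in that reference, and you are right to flag that the naive eigenvector-product argument only exhausts the spectrum with correct multiplicities when $A$ and $B$ are diagonalizable.
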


For a reference of lemma \ref{Lemma:KronEigenValues} one can confer \citeauthor{horn1994topics} \cite{horn1994topics} for example.

\begin{theorem}
\label{Thm:StrongProdThetaK}
For any graphs $G_1$ and $G_2$
\begin{equation*}
    \frac{1}{k} \vartheta_k (G_1) \vartheta_k(G_2) \leq  \vartheta_k(G_1 \boxtimes G_2) \leq k \vartheta(G_1) \vartheta(G_2).
\end{equation*}
\end{theorem}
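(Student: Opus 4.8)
The plan is to prove the two inequalities separately. The right-hand (upper) bound follows almost immediately from results already available in the paper, whereas the left-hand (lower) bound requires exhibiting an explicit feasible point for the dual program \ref{eqn:ThetaK_SDP2} built from optimal solutions for $G_1$ and $G_2$ via a Kronecker product.

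For the upper bound, I would invoke proposition \ref{Thm:ChromGeneralLB}, which gives $\vartheta_k(H) \le k\vartheta(H)$ for every graph $H$. Applying this with $H = G_1 \boxtimes G_2$ and then using Lov\'asz's multiplicativity result \eqref{eqn:LovaszThetaStrongProductEq}, namely $\vartheta(G_1 \boxtimes G_2) = \vartheta(G_1)\vartheta(G_2)$, yields
\[
\vartheta_k(G_1 \boxtimes G_2) \le k\,\vartheta(G_1 \boxtimes G_2) = k\,\vartheta(G_1)\vartheta(G_2),
\]
which is exactly the claimed right-hand inequality, so no further work is needed here.

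For the lower bound I would work with \ref{eqn:ThetaK_SDP2}. Let $n_t = |V(G_t)|$ and let $Y_t$ be optimal for \ref{eqn:ThetaK_SDP2} for $G_t$, so that $\langle J, Y_t \rangle = \vartheta_k(G_t)$, $\langle I, Y_t \rangle = k$, $0 \preceq Y_t \preceq I$, and $(Y_t)_{ij} = 0$ on the edges of $G_t$, for $t \in \{1,2\}$. The candidate for $G_1 \boxtimes G_2$ is $Y := \tfrac{1}{k}\, Y_1 \otimes Y_2$, with rows and columns indexed by the product vertices $(v_1,v_2)$. I would check feasibility in three steps. First, since the all-ones and identity matrices on the product vertex set factor as $J_{n_1}\otimes J_{n_2}$ and $I_{n_1}\otimes I_{n_2}$, lemma \ref{Lemma:KronEigenValues} gives $\langle J, Y_1 \otimes Y_2 \rangle = \langle J, Y_1\rangle \langle J, Y_2\rangle$ and $\Tr(Y_1 \otimes Y_2) = \Tr(Y_1)\Tr(Y_2) = k^2$, whence $\langle I, Y \rangle = k$ and $\langle J, Y \rangle = \tfrac1k \vartheta_k(G_1)\vartheta_k(G_2)$. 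Second, the eigenvalues of $Y_1 \otimes Y_2$ are the products $\lambda_i(Y_1)\lambda_j(Y_2)$, each in $[0,1]$ because $0 \preceq Y_t \preceq I$; hence $Y$ has eigenvalues in $[0,1/k] \subseteq [0,1]$ and satisfies $0 \preceq Y \preceq I$. Once feasibility is established, the fact that \ref{eqn:ThetaK_SDP2} is a maximization means any feasible point lower bounds the optimum, giving $\vartheta_k(G_1 \boxtimes G_2) \ge \langle J, Y \rangle = \tfrac1k \vartheta_k(G_1)\vartheta_k(G_2)$.

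The step I expect to be the crux is the remaining feasibility condition, that $Y$ vanishes on the edges of $G_1 \boxtimes G_2$. I would use the entrywise identity $(Y_1 \otimes Y_2)_{(v_1,v_2),(u_1,u_2)} = (Y_1)_{v_1 u_1}(Y_2)_{v_2 u_2}$ together with the structure of the strong product in definition \ref{Def:GraphProducts}. If $(v_1,v_2)$ and $(u_1,u_2)$ are adjacent, then in each coordinate $t$ the entries are equal or adjacent, and since the pair is distinct at least one coordinate $t$ has $(v_t,u_t) \in E_t$; for that coordinate $(Y_t)_{v_t u_t} = 0$, so the whole product entry vanishes. Verifying this against the three edge types of the strong product (the two Cartesian cases, where one coordinate is an $E_t$-edge and the other is an equality, and the tensor case, where both coordinates are edges) closes the argument and hence the lower bound.
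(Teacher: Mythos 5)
Your proposal is correct and follows essentially the same route as the paper: the lower bound via the feasible point $\tfrac1k\,Y_1\otimes Y_2$ for \ref{eqn:ThetaK_SDP2} (checking trace, the eigenvalue bounds via lemma \ref{Lemma:KronEigenValues}, and the vanishing on edges of the strong product), and the upper bound from $\vartheta_k \le k\vartheta$ combined with \eqref{eqn:LovaszThetaStrongProductEq}. Your edge-by-edge verification of the support condition is a slightly more explicit version of the paper's appeal to the factorization $A_{G_1\boxtimes G_2}=(A_{G_1}+I)\otimes(A_{G_2}+I)-I$, but the argument is the same.
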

\begin{proof}
Let $X_1^*$ and $X_2^*$ be optimal to \ref{eqn:ThetaK_SDP2} for $G_1$ and $G_2$ respectively. The adjacency matrix of $G_1 \boxtimes G_2$ is given by
\begin{equation*}
    A_{G_1 \boxtimes G_2} = (A_{G_1} + I) \otimes (A_{G_2} +I) - I,
\end{equation*}
see e.g., \citeauthor{sayama2016estimation} \cite{sayama2016estimation}. Here $\otimes$ denotes the Kronecker product.
Consider $Y = \frac{1}{k} X_1^* \otimes X_2^*.$ From the adjacency matrix of $G_1 \boxtimes G_2$ it can be verified that $Y_{ij} = 0, \, \forall (i,j) \in E(G_1 \boxtimes G_2)$. By lemma \ref{Lemma:KronEigenValues}, the eigenvalues of $Y$ lie between 0 and 1 and thus $0 \preceq Y \preceq I$. More specifically, the eigenvalues of $Y$ lie between 0 and $\frac{1}{k}$, since the sum of the (non-negative) eigenvalues of $X^*_i$ is $\Tr(X^*_i)=k$ for $i=1,2$. Besides $\Tr(Y)=(1/k) \Tr(X^*_1) \Tr(X^*_2)=k$. It follows that matrix $Y$ is feasible to \ref{eqn:ThetaK_SDP2} for $G_1 \boxtimes G_2$ and attains the following objective value:
\begin{equation*}
    \langle J, Y \rangle = \frac{1}{k} \langle J, X_1^* \otimes X_2^* \rangle = \frac{1}{k} \vartheta_k(G_1) \vartheta_k(G_2).
\end{equation*}
This proves the lower bound. The upper bound follows from \eqref{eqn:VarthetaIneq} and \eqref{eqn:LovaszThetaStrongProductEq}.
\end{proof}
The bounds  from theorem \ref{Thm:StrongProdThetaK} are attained, for example, when both $G_1$ and $G_2$ are complete graphs (see theorem~\ref{Thm:ThetaK_comple}).
In general,  the bounds for $\vartheta_k(G_1 \boxtimes G_2)$ from theorem \ref{Thm:StrongProdThetaK} are more loose for larger values of $k$. We now focus on the disjunction graph product (see definition \ref{Def:GraphProducts}). For graphs $G_1$ and $G_2$ of order $n_1$ and $n_2$ respectively, we have
\begin{equation*}
    A_{G_1 \vee G_2} = J_{n_1} \otimes A_{G_2} + A_{G_1} \otimes (A_{\overline{G}_2} + I_{n_2}).
\end{equation*}
Equivalently, by noting that $A_{\overline{G}_2} + I_{n_2} = J_{n_2} - A_{G_2}$, we have $A_{G_1 \vee G_2} = \min \Big ( J_{n_1} \otimes A_{G_2} + A_{G_1} \otimes J_{n_2}, 1 \Big).$ Our next result provides an upper bound on the generalized $\vartheta$-number for  the disjunction product of two graphs.
\begin{theorem}
\label{thm:GraphProdDisjunction}
For graphs $G_1$ and $G_2$ of orders $n_1$ and $n_2$ respectively, we have
\begin{equation*}
    \vartheta_k(G_1 \vee G_2 ) \leq \min \big\{ n_1 \vartheta_k(G_2), n_2 \vartheta_k(G_1) \big\}.
\end{equation*}
\end{theorem}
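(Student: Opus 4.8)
The plan is to prove the one-sided bound $\vartheta_k(G_1\vee G_2)\le n_2\,\vartheta_k(G_1)$; the companion bound $\vartheta_k(G_1\vee G_2)\le n_1\,\vartheta_k(G_2)$ then follows by interchanging the roles of $G_1$ and $G_2$, and taking the minimum yields the claim. To get the one-sided bound I would work with the dual maximization program \ref{eqn:ThetaK_SDP2}. Let $Y$ be optimal for $\vartheta_k(G_1\vee G_2)$, so $Y\succeq 0$, $Y\preceq I$, $\langle I,Y\rangle=k$, $\langle J,Y\rangle=\vartheta_k(G_1\vee G_2)$, and $Y_{(v_1,v_2),(u_1,u_2)}=0$ whenever $(v_1,v_2)$ and $(u_1,u_2)$ are adjacent in $G_1\vee G_2$.

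The central idea is to contract $Y$ over the second coordinate. Set $M:=I_{n_1}\otimes \vec{1}_{n_2}$ and $Z:=M^\top Y M\in\mathbb{S}^{n_1}$, so that $Z_{v_1u_1}=\sum_{v_2,u_2}Y_{(v_1,v_2),(u_1,u_2)}$. Since $M\vec{1}_{n_1}=\vec{1}_{n_1n_2}$, this contraction preserves the total sum of entries: $\langle J_{n_1},Z\rangle=\langle J_{n_1n_2},Y\rangle=\vartheta_k(G_1\vee G_2)$. I would then put $W:=\tfrac{1}{n_2}Z$ and verify that $W$ is feasible for the dual program of $G_1$ at trace level $t:=\Tr(W)$. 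Indeed, $W\succeq 0$ because $Z=M^\top Y M\succeq 0$; $W\preceq I$ because $Y\preceq I$ gives $Z=M^\top Y M\preceq M^\top M=n_2 I_{n_1}$; and the edge constraints for $G_1$ hold since, if $(v_1,u_1)\in E_1$, the disjunction rule (Definition \ref{Def:GraphProducts}) forces $(v_1,v_2)$ and $(u_1,u_2)$ to be adjacent in $G_1\vee G_2$ for all $v_2,u_2$, so every summand defining $Z_{v_1u_1}$ vanishes. For the trace, $\Tr(Z)=\langle Y,MM^\top\rangle=\langle Y,\,I_{n_1}\otimes J_{n_2}\rangle$, and because $I_{n_1}\otimes(n_2I_{n_2}-J_{n_2})\succeq 0$ and $Y\succeq 0$ their inner product is nonnegative, giving $\Tr(Z)\le n_2\langle I,Y\rangle=n_2k$, i.e. $t=\Tr(W)\le k$.

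Since $W$ is feasible for the dual program of $G_1$ with $\langle I,W\rangle=t$, its objective satisfies $\langle J_{n_1},W\rangle\le\vartheta_t(G_1)$, and hence $\vartheta_k(G_1\vee G_2)=\langle J_{n_1},Z\rangle=n_2\langle J_{n_1},W\rangle\le n_2\,\vartheta_t(G_1)$. To finish I would invoke $\vartheta_t(G_1)\le\vartheta_k(G_1)$, which is the monotonicity of Proposition \ref{thm:ThetaKIncreasing}.

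The main obstacle is precisely this last monotonicity step: the contracted matrix has trace only $\le k$ rather than exactly $k$, and $t$ need not be an integer, so I cannot directly quote \ref{eqn:ThetaK_SDP2} at level $k$. I would therefore extend Proposition \ref{thm:ThetaKIncreasing} to a real-valued trace parameter $t\in[0,n_1]$: for $t_1\le t_2$ the matrix $(1-s)W+sI$ with $s=(t_2-t_1)/(n_1-t_1)\in[0,1]$ is dual-feasible at level $t_2$, stays in $[0,I]$, keeps the edge entries zero (because $I$ is diagonal), and has objective $(1-s)\langle J,W\rangle+s\,n_1\ge\langle J,W\rangle$ as $n_1\ge\vartheta_{t_1}(G_1)$; this shows $t\mapsto\vartheta_t(G_1)$ is nondecreasing, which closes the argument. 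A secondary point to handle carefully is that the two semidefinite facts $Z\preceq n_2I_{n_1}$ and $\Tr(Z)\le n_2k$ genuinely use the full dual feasibility of $Y$ (namely $Y\preceq I$ and $Y\succeq 0$), not merely its support pattern and trace, so the contraction argument must be run on a feasible $Y$ from \ref{eqn:ThetaK_SDP2} rather than on an arbitrary matrix respecting the edge constraints.
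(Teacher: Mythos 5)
Your proof is correct, but it takes a genuinely different route from the paper. The paper first observes that deleting all edges of $G_1$ only relaxes the dual SDP, so $\vartheta_k(G_1\vee G_2)\le\vartheta_k(\overline{K}_{n_1}\vee G_2)$, and then computes $\vartheta_k(\overline{K}_{n_1}\vee G_2)=n_1\vartheta_k(G_2)$ exactly by exhibiting matching feasible solutions to \emph{both} programs: $J_{n_1}\otimes\tfrac{1}{n_1}X^*$ for the dual \ref{eqn:ThetaK_SDP2}, and $(J_{n_1}\otimes Y^*,\,J_{n_1}\otimes X^*,\,n_1\mu^*)$ for the primal \ref{ThetaK_SDP}, the latter requiring $\mu^*\ge 0$ from proposition \ref{prop:PositiveMu} and lemma \ref{Lemma:BlockFormSDP}-type reasoning to verify positive semidefiniteness. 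You instead compress an optimal dual solution of the product graph over the $G_2$-coordinate via $Z=M^\top YM$ with $M=I_{n_1}\otimes\vec{1}_{n_2}$; all the feasibility checks you list ($W\succeq 0$, $W\preceq I$ via $M^\top M=n_2I_{n_1}$, the edge constraints from the disjunction rule, and $\Tr(W)\le k$ from $\langle Y,\,I_{n_1}\otimes(n_2I_{n_2}-J_{n_2})\rangle\ge 0$) go through, and your real-trace extension of proposition \ref{thm:ThetaKIncreasing} correctly patches the only gap (note that in that step the inequality you actually need is $\langle J,W\rangle\le n_1$, which follows directly from $W\preceq I$). Your approach stays entirely within the dual program and never touches the primal SDP or the sign of $\mu^*$, at the modest cost of the auxiliary monotonicity lemma for non-integer trace levels; the paper's approach costs more bookkeeping but yields the stronger conclusion that the bound is attained with equality when one factor is an empty graph, a fact the authors record immediately after the proof.
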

\begin{proof}
Consider the SDP problem \ref{eqn:ThetaK_SDP2} for $G_1 \vee G_2$. This maximization problem is least constrained when $G_1 = \overline{K}_{n_1} $. Thus
\begin{equation}
    \label{eqn:Disconj1}
    \vartheta_k(G_1 \vee G_2 ) \leq \vartheta_k( \overline{K}_{n_1} \vee G_2).
\end{equation}
We will show that $\vartheta_k( \overline{K}_{n_1} \vee G_2) = n_1 \vartheta_k(G_2)$. Let $X^*$ be an optimal solution to \ref{eqn:ThetaK_SDP2} for $G_2$. Matrix $J_{n_1} \otimes \frac{1}{n_1}X^*$ is a feasible solution to \ref{eqn:ThetaK_SDP2} for $\overline{K}_{n_1} \vee G_2$. The objective value of this solution equals
\begin{equation}
    \label{eqn:Disconj2}
    \langle J, J_{n_1} \otimes \frac{1}{n_1} X^* \rangle = n_1 \langle J, X^* \rangle = n_1 \vartheta_k(G_2) \Longrightarrow \vartheta_k( \overline{K}_{n_1} \vee G_2 ) \geq n_1 \vartheta_k(G_2).
\end{equation}
Let ($Y^*, X^*, \mu^*$) be an optimal solution to \ref{ThetaK_SDP} for $G_2$. Then $J_{n_1} \otimes Y^*$, $J_{n_1} \otimes X^*$ and $n_1 \mu^*$ form a feasible solution to \ref{ThetaK_SDP} for $\overline{K}_{n_1} \vee G_2$. Namely, by lemma \ref{Lemma:KronEigenValues} we have that $J_{n_1} \otimes Y^* \succeq 0$.
Also
\begin{equation*}
n_1\mu^*I + J_{n_1} \otimes X^* - J + J_{n_1} \otimes Y^* =
\mu^*(n_1I_{n_1} -J_{n_1}) \otimes I_{n_2} + J_{n_1}\otimes (\mu^* I_{n_2} + X^*-J_{n_2} + Y^*)
 \succeq 0,
\end{equation*}
where we use that $\mu^*\geq 0$, see  proposition \ref{prop:PositiveMu}.
Lastly, this feasible solution to the minimization problem obtains an objective value of
\begin{equation}
    \label{eqn:Disconj3}
    \langle I, J_{n_1} \otimes Y^* \rangle + n_1\mu^*k = n_1 \Big( \langle I, Y^* \rangle + \mu^*k \Big) = n_1 \vartheta_k(G_2)~ \Longrightarrow ~\vartheta_k ( \overline{K}_{n_1} \vee G_2) \leq n_1 \vartheta_k(G_2).
\end{equation}
Now \eqref{eqn:Disconj2} and \eqref{eqn:Disconj3} imply that $\vartheta_k(\overline{K}_{n_1} \vee G_2) = n_1 \vartheta_k(G_2)$. This result combined with \eqref{eqn:Disconj1} proves that
\begin{equation}
    \label{eqn:DisjunctionHalfProof1}
    \vartheta_k(G_1 \vee G_2 ) \leq  n_1 \vartheta_k(G_2).
\end{equation}
From the definition of the disjunction graph product (see definition \ref{Def:GraphProducts}), it follows that the disjunction graph product is commutative and thus
\begin{equation}
    \label{eqn:DisjunctionHalfProof2}
    \vartheta_k(G_1 \vee G_2 ) = \vartheta_k(G_2 \vee G_1) \leq  n_2 \vartheta_k(G_1).
\end{equation}
Combining equations \eqref{eqn:DisjunctionHalfProof1} and \eqref{eqn:DisjunctionHalfProof2} proves the theorem.
\end{proof}
The proof shows that when either $G_1$ or $G_2$ is the complement of a complete graph, graph $G_1 \vee G_2$ attains the bound of theorem \ref{thm:GraphProdDisjunction}.

\section{Value of $\vartheta_k$ for some graphs}
\label{Section:ValueOfThetaKForSomeGraphs}
In \cite{lovasz1979shannon}, Lov{\'a}sz derived an explicit expression for the $\vartheta$-number
of cycle graphs and the Kneser graphs. In this section, we derive the generalized  $\vartheta$-number for those graphs, as well as  for  circulant,  complete,  complete multipartite graphs, and the Johnson graphs.
In section \ref{section:RegularGraphs} we present bounds for  $\vartheta_k(G)$ when $G$ is a regular graph and show that the bound is tight for edge-transitive graphs. Section \ref{Section:AnalysisOfCart}  provides an analysis of  $\vartheta(K_k \square G)$, which is an upper bound on  the number of vertices in the maximum $k$-colorable subgraph of $G$.

 We denote cycle graphs of order $n$ by $C_n$, complete graphs of order $n$ by $K_n$, and complete multipartite graph by $K_{m_1, \ldots, m_p}$. Note that $K_{m_1, \ldots ,m_p}$ is a graph on $n = \sum\limits_{i=1}^p m_i$ vertices.

\begin{theorem}
\label{Thm:ThetaK_comple}
For $k \leq n$, $\vartheta_k(K_n) = k$.
\end{theorem}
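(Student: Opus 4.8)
The plan is to pin $\vartheta_k(K_n)$ to the value $k$ by producing a feasible matrix in the eigenvalue formulation \eqref{eqn:ThetaK_eigenvalues} whose top-$k$ eigenvalue sum equals $k$ (upper bound), and then showing no feasible matrix can do strictly better (lower bound). The first step is to understand the feasible set $\mathcal{A}(K_n)$ from \eqref{eqn:AGset}. Since every pair of distinct vertices is an edge of $K_n$, the only pairs $(i,j) \notin E(K_n)$ are the diagonal pairs $i=j$. Hence $\mathcal{A}(K_n)$ consists precisely of the symmetric matrices with $A_{ii}=1$ for all $i$, with the off-diagonal entries completely free.

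For the upper bound I would simply take $A=I$, which lies in $\mathcal{A}(K_n)$. All its eigenvalues equal $1$, so $\sum_{i=1}^k \lambda_i(I)=k$ and therefore $\vartheta_k(K_n)\leq k$. (Equivalently, the upper bound is immediate from $\vartheta_k(K_n)\leq k\vartheta(K_n)=k$ via \eqref{eqn:VarthetaIneq} together with the classical fact $\vartheta(K_n)=1$.)

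The lower bound is the only step with any content, and even it is elementary. For any $A\in\mathcal{A}(K_n)$ the diagonal condition forces $\Tr(A)=\sum_{i=1}^n \lambda_i(A)=n$. Using the ordering $\lambda_1(A)\geq\cdots\geq\lambda_n(A)$, the average of the top $k$ eigenvalues dominates $\lambda_k(A)$, which in turn dominates the average of the bottom $n-k$ eigenvalues; writing $\lambda_i=\lambda_i(A)$, this is the chain $\tfrac{1}{k}\sum_{i=1}^k \lambda_i \geq \lambda_k \geq \lambda_{k+1} \geq \tfrac{1}{n-k}\sum_{i=k+1}^n \lambda_i$. Consequently the top-$k$ average is at least the grand average $\Tr(A)/n=1$, giving $\sum_{i=1}^k \lambda_i(A)\geq k$. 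Minimizing over $\mathcal{A}(K_n)$ yields $\vartheta_k(K_n)\geq k$, and combined with the upper bound this proves $\vartheta_k(K_n)=k$. The main (minor) obstacle is just making this averaging step rigorous; everything else is immediate.

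As a sanity check and alternative route, the result also drops out instantly from the generalized sandwich \eqref{eqn:ThetaK_inequality} applied to the empty graph: taking $\overline{G}=K_n$, i.e.\ $G=\overline{K_n}$, one has $\alpha_k(K_n)=k$ (a $k$-colorable subgraph of $K_n$ has at most $k$ vertices) and $\chi_k(\overline{K_n})=k$ (the edgeless graph needs exactly $k$ colors for a $k$-multicoloring), which forces $k\leq\vartheta_k(K_n)\leq k$ for $k\leq n$.
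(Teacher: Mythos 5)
Your proof is correct, but it runs through a different formulation than the paper's. The paper works with the dual SDP \eqref{eqn:ThetaK_SDP2}: for $K_n$ every feasible $Y$ is forced to be diagonal with $\Tr(Y)=k$, so the objective $\langle J,Y\rangle=\Tr(Y)=k$ is \emph{constant} on the feasible set, and the only thing left to check is nonemptiness, witnessed by $Y=\tfrac{k}{n}I$ (this is where $k\leq n$ enters, via $Y\preceq I$). You instead work with the primal eigenvalue formulation \eqref{eqn:ThetaK_eigenvalues}, taking $A=I$ for the upper bound and using $\Tr(A)=n$ together with the fact that the top-$k$ average of a sorted list dominates the grand average for the lower bound; that averaging step is standard and your chain of inequalities makes it rigorous. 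Both arguments are equally elementary; the paper's has the small aesthetic advantage that no optimization or inequality is needed at all once feasibility collapses to diagonal matrices, while yours has the advantage of staying entirely within the original eigenvalue definition and not invoking SDP duality. Your sanity-check via the sandwich inequality \eqref{eqn:ThetaK_inequality} with $\alpha_k(K_n)=k=\chi_k(\overline{K_n})$ is also a valid self-contained proof and is arguably the most combinatorial of the three.
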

\begin{proof}
Consider the SDP problem \ref{eqn:ThetaK_SDP2}. For the complete graph, the only matrices feasible for \ref{eqn:ThetaK_SDP2} are diagonal matrices with trace equal to $k$. Set for example $Y = \frac{k}{n}I$. Then $Y$ is feasible for \ref{eqn:ThetaK_SDP2} and has objective value $k$.
\end{proof}

\citeauthor{stahl1976n} \cite{stahl1976n} determined $\chi_k(C_n)$. For odd cycles, he showed that $\chi_k(C_{2n+1}) = 2k+1 + \Big \lfloor \frac{k-1}{n} \Big \rfloor,$ and for even cycles that  $\chi_k(C_{2n}) = 2k$. The latter result follows trivially  from \eqref{eqn:WeaklyPerfectMulticoloring}.

Since $C_n$ is bipartite when $n$ is even, it follows from proposition \ref{Thm:ChromGeneralLB} that  $\vartheta_k(C_n) = n$ for all $k\geq 2$. To compute $\vartheta_2(C_n)$ for odd cycle graphs, we require the following lemma.
\begin{lemma}
\label{lemma:OddCycleTheta}
For $n$ odd, $n \geq 5$, we have $0.447 \approx \frac{\sqrt{5}}{5}  \leq \frac{\vartheta(C_n)}{n} < \frac{\vartheta(C_{n+2})}{n+2} < \frac{1}{2}$.
\end{lemma}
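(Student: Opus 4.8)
The plan is to reduce the whole chain of inequalities to elementary monotonicity of one real function, using Lov\'asz's closed-form expression for the theta number of an odd cycle. Recall from \cite{lovasz1979shannon} that for odd $n \geq 5$,
\[
\vartheta(C_n) = \frac{n \cos(\pi/n)}{1 + \cos(\pi/n)},
\]
so that $\vartheta(C_n)/n = g(\pi/n)$ with $g(x) := \cos x / (1+\cos x)$. Since $n \geq 5$ odd forces $\pi/n \in (0, \pi/5]$, everything reduces to the behaviour of $g$ on this interval.

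First I would show that $g$ is strictly decreasing on $(0,\pi)$. A direct differentiation gives $g'(x) = -\sin x / (1+\cos x)^2$, which is strictly negative for $x \in (0,\pi)$. As $x = \pi/n$ is strictly decreasing in $n$, the map $n \mapsto g(\pi/n) = \vartheta(C_n)/n$ is therefore strictly increasing in $n$; applying this to $n$ and $n+2$ yields the middle inequality $\vartheta(C_n)/n < \vartheta(C_{n+2})/(n+2)$. The two outer bounds then come from the endpoints of the interval. For the lower bound, the increasing sequence attains its smallest value over odd $n \geq 5$ at $n = 5$, and evaluating with $\cos(\pi/5) = (1+\sqrt5)/4$ gives
\[
\frac{\vartheta(C_5)}{5} = \frac{\cos(\pi/5)}{1+\cos(\pi/5)} = \frac{\sqrt5}{5},
\]
so $\sqrt5/5 \leq \vartheta(C_n)/n$. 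For the upper bound, note that $g(x) < 1/2$ is equivalent to $2\cos x < 1 + \cos x$, i.e.\ $\cos x < 1$, which holds for every $x \in (0,\pi)$; taking $x = \pi/(n+2)$ gives $\vartheta(C_{n+2})/(n+2) < 1/2$.

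I expect essentially no obstacle once Lov\'asz's formula is invoked: each inequality is either the sign of a derivative or a one-line algebraic comparison. The only points needing care are the exact arithmetic at $n=5$ (confirming that $g(\pi/5)$ simplifies to $\sqrt5/5$ after rationalising) and the strictness at the top, which is guaranteed because $\cos(\pi/(n+2)) < 1$ for every finite $n$, so the limiting value $1/2$ is never attained. The genuine content of the lemma is thus carried entirely by the explicit evaluation of $\vartheta(C_n)$; proving these bounds directly from the SDP \ref{eqn:ThetaK_SDP2} without the closed form would be considerably more delicate.
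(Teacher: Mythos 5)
Your proof is correct and follows essentially the same route as the paper: both invoke Lov\'asz's closed form $\vartheta(C_n)=\tfrac{n\cos(\pi/n)}{1+\cos(\pi/n)}$, establish the middle inequality by a sign-of-derivative argument (the paper differentiates $f(n)=\vartheta(C_n)/n$ in $n$, you differentiate $g(x)=\cos x/(1+\cos x)$ in $x=\pi/n$, which is the same computation), and obtain the outer bounds from $\cos(\pi/n)<1$ and the evaluation at $n=5$.
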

\begin{proof}

By \citeauthor{lovasz1979shannon} \cite{lovasz1979shannon}, we have
\begin{equation}
\vartheta(C_n) = \frac{n \cos( \pi /n)}{1 + \cos(\pi /n)}, \quad n \text{ odd}.
\end{equation}
Define $f(n) := \vartheta(C_n) / n$. Then
$f'(n) = \dfrac{{\pi}\sin\left(\frac{{\pi}}{n}\right)}{\left(1 + \cos\left(\frac{{\pi}}{n}\right)\right)^2n^2}.$
For $n \geq 5$, $f'(n) > 0$. Moreover, for $n \geq 5$, we have $\cos( \pi / n) < 1$. This results in $f(n) < 1/2$ and since $f(5) = \sqrt{5}/5$, this proves the lemma.
\end{proof}

Let us introduce a circulant matrix and
an edge-transitive graph. We need both terms in the proof of the following theorem. Each row of a circulant matrix equals the preceding row in the matrix rotated one element to the right. Circulant matrices thus have a constant row sum. This constant row sum is also one of the eigenvalues with $\vec{1}$ as its corresponding eigenvector.
A graph is edge transitive if its automorphism group acts transitively on edges, i.e., if for every two edges there is an automorphism that maps one to the other.

\begin{theorem}
\label{Thm:OddCycleTheta2}
Let $n$ be odd and $n>1$. Then $\vartheta_2(C_n) = 2 \vartheta(C_n)$  and $\vartheta_k(C_n) =n$ for all $k\geq 3$. 
\end{theorem}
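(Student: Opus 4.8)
The plan is to treat the two claims separately, since the second is essentially immediate. An odd cycle $C_n$ with $n>1$ (hence $n\ge 3$) has chromatic number $\chi(C_n)=3$, so proposition \ref{Thm:ChromGeneralLB} gives $\vartheta_k(C_n)=|V(C_n)|=n$ for every $k\ge 3$; this settles $\vartheta_k(C_n)=n$, $k\ge 3$, including the degenerate case $n=3$ where $C_3=K_3$. All the real work therefore lies in the identity $\vartheta_2(C_n)=2\vartheta(C_n)$. For the upper bound, proposition \ref{Thm:ChromGeneralLB} already yields $\vartheta_2(C_n)\le\min\{2\vartheta(C_n),n\}$, and lemma \ref{lemma:OddCycleTheta} gives $\vartheta(C_n)/n<1/2$, so the binding bound is $\vartheta_2(C_n)\le 2\vartheta(C_n)$. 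It remains to prove the reverse inequality by exhibiting a feasible point of the dual program \eqref{eqn:ThetaK_SDP2} at $k=2$ with objective $2\vartheta(C_n)$.

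My strategy for the lower bound is to take an optimal solution $Y^\ast$ of \eqref{eqn:ThetaK_SDP2} at $k=1$ and simply double it. The matrix $2Y^\ast$ automatically satisfies $\langle I,2Y^\ast\rangle=2$, $2Y^\ast\succeq 0$, and the support constraint $(2Y^\ast)_{ij}=0$ for $(i,j)\in E(C_n)$, and it attains objective $\langle J,2Y^\ast\rangle=2\vartheta(C_n)$. The only nontrivial feasibility requirement is $2Y^\ast\preceq I$, i.e. $\lambda_1(Y^\ast)\le\tfrac12$. To control $\lambda_1(Y^\ast)$ I would exploit the symmetry introduced just before the theorem: since $C_n$ is vertex-transitive with a circulant adjacency matrix, averaging any optimizer over its automorphism group produces an optimal $Y^\ast$ that is itself a symmetric circulant matrix whose two edge-diagonals vanish. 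For such $Y^\ast$ the program \eqref{eqn:ThetaK_SDP2} diagonalizes in the Fourier basis and collapses to a linear program in the eigenvalues $\lambda_\ell$: maximize $\lambda_0$ subject to $\sum_\ell\lambda_\ell=1$ (trace), $\sum_\ell\lambda_\ell\cos(2\pi\ell/n)=0$ (the vanishing edge-diagonal), and $\lambda_\ell\ge 0$, using that $\langle J,Y^\ast\rangle=n\lambda_0$ because $\vec{1}$ is the eigenvector of the frequency-$0$ mode.

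Solving this linear program is where the closed form emerges: the residual weight $1-\lambda_0$ should be placed on the most negative frequency $\min_\ell\cos(2\pi\ell/n)=-\cos(\pi/n)$, attained at $\ell=(n\pm1)/2$ for odd $n$, giving $\lambda_0=\frac{\cos(\pi/n)}{1+\cos(\pi/n)}$ and hence $\langle J,Y^\ast\rangle=n\lambda_0=\vartheta(C_n)$, recovering Lovász's formula. The remaining nonzero eigenvalues are the doubly degenerate value $\frac{1}{2(1+\cos(\pi/n))}$, and all others are zero. The main obstacle, the eigenvalue bound $\lambda_1(Y^\ast)\le\tfrac12$, is then resolved by inspection: $\frac{\cos(\pi/n)}{1+\cos(\pi/n)}\le\tfrac12\iff\cos(\pi/n)\le 1$ and $\frac{1}{2(1+\cos(\pi/n))}<\tfrac12\iff\cos(\pi/n)>0$, both of which hold for all $n\ge 3$. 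Thus $\lambda_1(Y^\ast)<\tfrac12$, so $2Y^\ast$ is feasible for \eqref{eqn:ThetaK_SDP2} at $k=2$ and $\vartheta_2(C_n)\ge 2\vartheta(C_n)$; together with the upper bound this proves $\vartheta_2(C_n)=2\vartheta(C_n)$.

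The two points that warrant care are the symmetrization argument (justifying that an optimal $Y^\ast$ may be taken circulant, which requires invariance of the feasible set and concavity of the objective under group averaging) and the optimality claim for the linear program (the upper estimate $\sum_{\ell\ge 1}\lambda_\ell\cos(2\pi\ell/n)\ge -(1-\lambda_0)\cos(\pi/n)$ forces $\lambda_0\le\frac{\cos(\pi/n)}{1+\cos(\pi/n)}$, with equality exactly when all residual weight sits at $\ell=(n\pm1)/2$). I would also note that the degenerate case $n=3$ collapses to $Y^\ast=\tfrac1n I$, where the doubling argument is trivial, so no separate treatment is needed.
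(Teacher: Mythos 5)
Your proof is correct, and its skeleton coincides with the paper's: reduce to the lower bound $\vartheta_2(C_n)\ge 2\vartheta(C_n)$ via \eqref{eqn:VarthetaIneq}, symmetrize an optimizer of $\vartheta_1$\textit{-SDP2} to a symmetric circulant $Y^\ast$, and show $2Y^\ast\preceq I$ by bounding $\lambda_1(Y^\ast)$ by $1/2$. Where you genuinely diverge is in how that eigenvalue bound is obtained. The paper argues indirectly: it only knows that $\vec{1}$ is an eigenvector of $\bar Y$ with eigenvalue $\vartheta(C_n)/n$, and it rules out a larger eigenvalue by a parity-of-multiplicities fact for symmetric circulants of odd order (only one eigenvalue has odd multiplicity), combined with the numerical estimate $\vartheta(C_n)/n\ge\sqrt5/5$ from lemma \ref{lemma:OddCycleTheta} to force the trace above $1$. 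You instead diagonalize the circulant program in the Fourier basis, solve the resulting linear program explicitly, and exhibit a concrete optimizer whose full spectrum is $\bigl\{\tfrac{\cos(\pi/n)}{1+\cos(\pi/n)},\ \tfrac{1}{2(1+\cos(\pi/n))}\text{ (twice)},\ 0\bigr\}$, after which $\lambda_1<1/2$ is immediate from $0<\cos(\pi/n)<1$. Your route buys self-containedness: it does not need the multiplicity-parity fact of \cite{tee2007eigenvectors}, it does not need the quantitative lower bound $\sqrt5/5$ from lemma \ref{lemma:OddCycleTheta} (only the trivial $\cos(\pi/n)>0$), and it re-derives Lov\'asz's formula for $\vartheta(C_n)$ along the way. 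The paper's argument is shorter once its ingredients are granted and avoids committing to a particular optimizer. Two small remarks: the objective is linear, so "concavity under averaging" is more than you need; and for the claim $\langle J,2Y^\ast\rangle=2\vartheta(C_n)$ you should make explicit that the circulant restriction is without loss of optimality (your symmetrization does this) or simply cite Lov\'asz's closed form, either of which suffices.
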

\begin{proof}
For $n = 3$, $C_3 = K_3$ and the result follows from theorem \ref{Thm:ThetaK_comple}. Thus let $n \geq 5$. Let $\Gamma \subset \mathbb{S}^n$ be the set of optimal feasible solutions to $\vartheta_1$\textit{-SDP2} for $C_n$ and let $Y \in \Gamma$. Note $\Gamma$ is convex. Let $p(Y)$ denote an optimal solution to $\vartheta_1$\textit{-SDP2} obtained by permuting the vertices of $C_n$ by automorphism $p$. Matrix $p(Y) \in \Gamma$. Denote the average over all automorphisms $p$ by $\bar{Y}$. Then $\bar{Y} \in \Gamma$ by convexity of $\Gamma$ and since $C_n$ is edge transitive, $\bar{Y}$ is a circulant matrix, like the adjacency matrix of $C_n$.

As $\bar{Y} \in \Gamma$, we find
\begin{equation}
\label{eqn:ProofOddCycle}
\langle J, \bar{Y} \rangle = \Tr (\vec{1} \vec{1}^\top \bar{Y})  =  \Tr (\vec{1}^\top \bar{Y} \vec{1} )= \vec{1}^\top \bar{Y} \vec{1}= \vartheta_1(C_n).
\end{equation}
As $\bar{Y}$ is also circulant, it has eigenvector $\vec{1}$. By \eqref{eqn:ProofOddCycle}, its corresponding eigenvalue equals $\bar{\lambda} = \vartheta(C_n) /n$.

We will prove that the largest eigenvalue of $\bar{Y}$ equals $\bar{\lambda}$. Assume that the largest eigenvalue of $\bar{Y}$ does not equal $\bar{\lambda}$. Then $\bar{Y}$ has eigenvalue $\Lambda$, for some $\Lambda > \bar{\lambda}$. Since $\bar{Y}$ is a symmetric circulant matrix of odd dimension, $\bar{Y}$ has only one eigenvalue with odd multiplicity (\citeauthor{tee2007eigenvectors} \cite{tee2007eigenvectors}). Thus $\Lambda$ or $\bar{\lambda}$ have multiplicity greater than one. Note that since $\bar{Y}$ is feasible for $\vartheta_1$\textit{-SDP2}, it has non-negative eigenvalues that sum to one. However, both terms $\Lambda + 2\bar{\lambda}$ and $2 \Lambda + \bar{\lambda}$ are strictly greater than one by lemma \ref{lemma:OddCycleTheta}, and hence, the assumption that $\bar{\lambda}$ is not the largest eigenvalue of $\bar{Y}$ leads to a contradiction.

The largest eigenvalue of $\bar{Y}$ is thus smaller than 1/2. Then $2 \bar{Y} \preceq I$. Clearly, $2\bar{Y}$ satisfies the other feasibility conditions of $\vartheta_2$\textit{-SDP2}. Thus $2\bar{Y}$ is feasible for $\vartheta_2$\textit{-SDP2} and $\vartheta_2(C_n) \geq 2 \vartheta(C_n)$. Combined with \eqref{eqn:VarthetaIneq}, the theorem follows.

Since $\chi(C_n) = 3$ for odd cycles, $\vartheta_3(C_n) = n$ follows trivially from proposition \ref{Thm:ChromGeneralLB}.
\end{proof}

Graphs for which the adjacency matrix is a circulant matrix are called \textit{circulant graphs}, like the cycle graphs and some Paley graphs. There has been research done on computing $\vartheta(G)$ for circulant graphs \cite{bachoc2013theta, brimkov2007algorithmic, brimkov2000lovasz, crespi2004exact}.
In particular, \citeauthor{crespi2004exact} \cite{crespi2004exact} computes the Lov\'asz theta function for the circulant graphs of degree four having even displacement, while
 \citeauthor{brimkov2000lovasz} \cite{brimkov2000lovasz} consider $\vartheta(C_{n,j})$, where $V(C_{n,j}) = \{0, 1, \ldots ,n-1 \}$ and $E(C_{n,j}) = E(C_n) \cup \{ (i,i')\, | \, i-i'= j \mod n \}$.

Let $H_n$ be a connected circulant graph on $n$ vertices. Then $H_n$ contains a Hamiltonian cycle (\citeauthor{boesch1984circulants} \cite{boesch1984circulants}). Equivalently, the cycle graph $C_n$ is a minor of $H_n$. Maximization problem \ref{eqn:ThetaK_SDP2} is then more restricted for $H_n$ then it is for $C_n$. Thus
\begin{equation*}
    \vartheta_1(H_n) \leq \vartheta_1(C_n) \leq \frac{n}{2}.
\end{equation*}
Consider $\vartheta_1$\textit{-SDP2} for $H_n$. Graph $H_n$ has a circulant adjacency matrix, meaning we can restrict optimization of  $\vartheta_1$\textit{-SDP2} over  the Lee scheme, the association scheme of symmetric circulant matrices, without loss of generality \cite{Parillo}. As \eqref{eqn:ProofOddCycle} shows, $\vartheta_1$\textit{-SDP2} is now equivalent to maximizing the largest (scaled) eigenvalue over feasible matrices. Let $M$ be a matrix optimal for $\vartheta_1$\textit{-SDP2} for graph $H_n$. Then $\lambda_1(M) = \vartheta(H_n)/n \leq 1/2$. Then $2M$ is also optimal for $\vartheta_2$\textit{-SDP2} for  graph $H_n$. More generally, if $k \leq n/\vartheta(H_n)$, then $\lambda_1(kM) \leq 1$ and $kM$ is then feasible for \ref{eqn:ThetaK_SDP2}, attaining the objective value $\min \{ k \vartheta(H_n), n \}$.
In case $k > n / \vartheta(H_n)$, we  have $\vartheta_k(H_n) = n$.
Thus, in general
\begin{equation}
\label{eqn:ConstantRowSumResult}
\vartheta_k(H_n) = \min \{ k \vartheta(H_n), n \}.
\end{equation}
For any $k$, there exists a circulant graph $P$ on $n$ vertices such that $\vartheta_k(P) < n$. Specifically, if $P$ is the Paley graph of order $n$, then $\vartheta(P) = \sqrt n$ (cf. \cite{gvozdenovic2008approximating}). For fixed $k$ and $n$ large enough, $k \sqrt n < n$.

\begin{theorem}
\label{thm:CompleteMultipartite}
For $m_1 \geq m_2 \geq \ldots \geq m_p$ and $k \leq p$, $\vartheta_k(K_{m_1, \ldots, m_p}) = \sum \limits_{i=1}^k m_i$.
\end{theorem}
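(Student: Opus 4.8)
The plan is to establish the claimed value by exhibiting matching lower and upper bounds, both obtained by exploiting the block structure that the $p$ independent parts impose on the relevant matrices. Throughout I would index the $n=\sum_{i=1}^p m_i$ vertices so that the first $m_1$ belong to the largest part, the next $m_2$ to the second largest, and so on, and recall that two vertices of $K_{m_1,\ldots,m_p}$ are non-adjacent precisely when they lie in the same part.

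For the upper bound I would use the eigenvalue formulation \eqref{eqn:ThetaK_eigenvalues}. Consider the block-diagonal matrix $A\in\mathbb{S}^n$ whose $i$-th diagonal block equals $J_{m_i}$ and whose inter-part (off-diagonal) blocks vanish. Every non-edge of $K_{m_1,\ldots,m_p}$, including the diagonal, lies inside a single part, so the requirement $A_{ij}=1$ for $(i,j)\notin E$ from \eqref{eqn:AGset} is met by the all-ones diagonal blocks, while the inter-part entries are free and are set to $0$; hence $A\in\mathcal{A}(K_{m_1,\ldots,m_p})$. Since the spectrum of a block-diagonal matrix is the union of the spectra of its blocks, and each $J_{m_i}$ contributes the eigenvalue $m_i$ once together with $0$ of multiplicity $m_i-1$, the nonzero eigenvalues of $A$ are exactly $m_1,\ldots,m_p$. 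As $k\le p$ and $m_1\ge\cdots\ge m_p$, the $k$ largest eigenvalues of $A$ are $m_1,\ldots,m_k$, so \eqref{eqn:ThetaK_eigenvalues} gives $\vartheta_k(K_{m_1,\ldots,m_p})\le\sum_{i=1}^k m_i$.

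For the matching lower bound I would produce a feasible point of the dual \ref{eqn:ThetaK_SDP2}. Because every edge joins two distinct parts, the constraint $Y_{ij}=0$ for $(i,j)\in E$ forces any feasible $Y$ to be block-diagonal along the parts, and this is exactly what lets me construct one. Take $Y$ whose $i$-th diagonal block is $\tfrac{1}{m_i}J_{m_i}$ for $1\le i\le k$ and zero for $i>k$. Each nonzero block has eigenvalues $1$ and $0$, so $0\preceq Y\preceq I$; moreover $\langle I,Y\rangle=\sum_{i=1}^k \tfrac{1}{m_i}\Tr(J_{m_i})=k$, so $Y$ is feasible. Its objective value is $\langle J,Y\rangle=\sum_{i=1}^k \tfrac{1}{m_i}\langle J_{m_i},J_{m_i}\rangle=\sum_{i=1}^k m_i$, giving $\vartheta_k(K_{m_1,\ldots,m_p})\ge\sum_{i=1}^k m_i$ and hence equality. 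As an alternative route to the lower bound, one may invoke the sandwich inequality \eqref{eqn:ThetaK_inequality} (applied to the complement, so that $\alpha_k(G)\le\vartheta_k(G)$) after observing that a $k$-colorable induced subgraph can meet at most $k$ parts, whence $\alpha_k(K_{m_1,\ldots,m_p})=\sum_{i=1}^k m_i$.

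The structural observation that the parts force a common block-diagonal pattern on both the primal matrix $A$ and the dual variable $Y$ is the crux; once it is in place the remaining verifications are routine linear algebra. I therefore do not anticipate a serious obstacle, the only mild care being the eigenvalue bookkeeping for the block-diagonal matrices and the use of the ordering $m_1\ge\cdots\ge m_p$ to ensure that the top $k$ eigenvalues are precisely $m_1,\dots,m_k$.
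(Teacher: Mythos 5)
Your proof is correct, and the upper-bound half is exactly the paper's argument: the block-diagonal matrix $\mathrm{Diag}(J_{m_1},\ldots,J_{m_p})$ lies in $\mathcal{A}(K_{m_1,\ldots,m_p})$ and its $k$ largest eigenvalues are $m_1,\ldots,m_k$, so \eqref{eqn:ThetaK_eigenvalues} gives $\vartheta_k \le \sum_{i=1}^k m_i$. Where you diverge is the lower bound: your primary route exhibits an explicit feasible point of the dual \ref{eqn:ThetaK_SDP2}, namely the block-diagonal $Y$ with blocks $\tfrac{1}{m_i}J_{m_i}$ on the $k$ largest parts, and checks $0\preceq Y\preceq I$, $\langle I,Y\rangle=k$ and $\langle J,Y\rangle=\sum_{i=1}^k m_i$ directly. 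The paper instead observes that $\alpha_k(K_{m_1,\ldots,m_p})=\sum_{i=1}^k m_i$ (each color class is a stable set, hence contained in one part) and invokes the sandwich inequality \eqref{eqn:ThetaK_inequality} --- which is precisely the ``alternative route'' you sketch at the end. The two are equally short here; your SDP certificate is more self-contained (it does not rely on the combinatorial identity for $\alpha_k$ or on the sandwich theorem), while the paper's version makes the tightness of the bound $\alpha_k \le \vartheta_k$ for this graph class explicit as a byproduct. Both verifications are sound, including the eigenvalue bookkeeping that uses $m_1\ge\cdots\ge m_p\ge 1$ and $k\le p$.
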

\begin{proof} Let $n = \sum_{i=1}^p m_i$.
For notational convenience, we  write $K = K_{m_1, \ldots, m_p}$, with corresponding adjacency matrix $A_K$.  Since $K$ is a graph on $n$ vertices, $A_K$ can be written as $A_K = J_{n} - \text{Diag}(J_{m_1}, \ldots, J_{m_p})$. Note that  $X := \text{Diag}(J_{m_1}, \ldots, J_{m_p}) \in \mathcal{A}(K)$, see  $\eqref{eqn:AGset}$. Therefore $X$ is feasible for \eqref{eqn:ThetaK_eigenvalues}. The eigenvalues of $X$ are the eigenvalues of the block matrices $J$. Then, $\lambda_i(X) = m_i$ for $i \in [p]$.  Thus, we have $\vartheta_k(K) \leq \sum_{i=1}^k  \lambda_i(X) = \sum_{i=1}^k m_i.$ Note that $\alpha_k(K) = \sum_{i=1}^k m_i$, and the proof follows from \eqref{eqn:ThetaK_inequality} and the above inequality.
\end{proof}
Recall again the definition of $\Delta_k(G)$, given in \eqref{eqn:DeltaDefinition}. In section \ref{section:Delta_k(G)}, we show  that strictly positive values $\Delta_k(G)$ can be arbitrarily small. We show now, by use of theorem \ref{thm:CompleteMultipartite}, that the ratio between strictly positive successive values of $\Delta_k(G)$ can be arbitrarily small. More formally, for any $\varepsilon > 0$ and any $k \geq 1$, there exists a graph $G$ such that
\begin{equation}
    \label{eqn:DeltaKRatioIneq}
    0 < \frac{\Delta_{k+1}(G)}{\Delta_k(G)} < \varepsilon.
\end{equation}
We again ignore the case $\Delta_k(G) = 0$, see \eqref{eqn:DeltaKeqZero}. In view of  theorem \ref{thm:CompleteMultipartite}  we have
\begin{equation*}
    \frac{\Delta_{2}(K_{n,1})}{\Delta_1(K_{n,1})} = \frac{1}{n} < \varepsilon,
\end{equation*}
 for some integer $n$ sufficiently large.
Thus for sufficiently large $n$, graph $K_{n,1}$ satisfies \eqref{eqn:DeltaKRatioIneq} for $k=1$. Graph $K_{n,n,1}$ satisfies \eqref{eqn:DeltaKRatioIneq} for $k = 2$. Graph $K_{n,n,n,1}$ satisfies \eqref{eqn:DeltaKRatioIneq} for $k = 3$, and so on.\\

\subsection{Regular graphs }
\label{section:RegularGraphs}
In this section we present an upper bound on  the $\vartheta_k$-function for regular graphs, see theorem \ref{Thm:EdgeTransitiveThetaK}. This result can be seen as a generalization of the Lov\'asz upper bound on the  $\vartheta$-function for regular graphs. We exploit the result of theorem  \ref{Thm:EdgeTransitiveThetaK} to  derive an explicit expression for the generalized theta function for the Kneser graph, see theorem \ref{Thm:KneserGraphThetak}. Moreover, we prove that $\vartheta_k(G)=k\vartheta (G)$ when $G$ is the Johnson graph, see theorem \ref{Thm:JhonsonGraphThetak}.

Let us first state the following well known result.
\begin{theorem}[\citeauthor{lovasz1979shannon} \cite{lovasz1979shannon}]
\label{lemma:LovaszThetaUB}
For a regular graph $G$ of order $n$, having adjacency matrix $A_G$ and $\lambda_1(A_G) \geq \lambda_2(A_G) \geq \ldots \geq \lambda_n(A_G)$, we have
\begin{equation*}
    \vartheta(G) \leq \frac{n \lambda_n(A_G)}{\lambda_n(A_G) - \lambda_1(A_G)}.
\end{equation*}
If $G$ is an edge-transitive graph, this inequality holds with equality.
\end{theorem}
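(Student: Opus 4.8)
The plan is to prove the inequality by exhibiting a single feasible matrix in the eigenvalue formulation \eqref{eqn:ThetaK_eigenvalues} (with $k=1$), and to establish tightness in the edge-transitive case through the dual \eqref{eqn:ThetaK_SDP2}. For the upper bound I would take $A = J + c\,A_G$ for a scalar $c$ to be chosen; since $A_G$ vanishes on all non-edges and on the diagonal, $A \in \mathcal{A}(G)$ for every $c$, see \eqref{eqn:AGset}. Because $G$ is regular with common degree $d = \lambda_1(A_G)$, the matrices $J$ and $A_G$ commute (both fix $\vec{1}$), hence are simultaneously diagonalizable: the vector $\vec{1}$ is an eigenvector of $A$ with eigenvalue $n+cd$, while every eigenvector of $A_G$ orthogonal to $\vec{1}$, with eigenvalue $\lambda_i(A_G)$, is an eigenvector of $A$ with eigenvalue $c\,\lambda_i(A_G)$. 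Choosing $c<0$ makes $c\,\lambda_n(A_G)$ the largest of these latter eigenvalues, so $\lambda_1(A) = \max\{\,n+cd,\ c\,\lambda_n(A_G)\,\}$. Equalising the two terms gives $c = n/(\lambda_n(A_G)-\lambda_1(A_G)) < 0$ and therefore $\lambda_1(A) = \tfrac{n\,\lambda_n(A_G)}{\lambda_n(A_G)-\lambda_1(A_G)}$, which bounds $\vartheta(G)$ from above.

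For the equality in the edge-transitive case I would produce a matching lower bound by constructing a dual-feasible $Y$ for \eqref{eqn:ThetaK_SDP2} (recall that $Y \preceq I$ is redundant when $k=1$). Let $P$ denote the orthogonal projector onto the $\lambda_n(A_G)$-eigenspace of $A_G$, of dimension $m$, and set $Y = aJ + bP$. Since $\mathrm{Aut}(G)$ preserves $A_G$ and hence $P$, edge-transitivity forces $P_{ij}$ to take a common value $\rho$ on every edge; the choice $a = -b\rho$ then makes $Y_{ij}=0$ on all edges simultaneously. As $\vec{1}$ lies in the top eigenspace of $A_G$ and is thus orthogonal to the range of $P$, the matrices $J$ and $P$ have orthogonal ranges, so $Y\succeq 0$ reduces to $a\geq 0$ and $b\geq 0$; these hold because $\rho = \tfrac{\lambda_n(A_G)\,m}{nd} < 0$, an identity obtained from $\Tr(A_G P) = \lambda_n(A_G)\,m$ together with the fact that $A_G$ has exactly $nd$ unit entries, all at positions where $P$ equals $\rho$. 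Imposing the normalisation $\Tr(Y) = an + bm = 1$ and computing $\langle J, Y\rangle = an^2$, the identity for $\rho$ collapses the objective to $\tfrac{n\,\lambda_n(A_G)}{\lambda_n(A_G)-\lambda_1(A_G)}$, matching the upper bound and forcing equality.

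The main obstacle is the equality direction, and within it the role of edge-transitivity: the entire argument hinges on $P_{ij}$ being constant across edges, so that the single scalar relation $a=-b\rho$ zeroes out all edge entries of $Y$ at once. I would need to justify carefully that the spectral projectors of $A_G$ are invariant under $\mathrm{Aut}(G)$ and that transitivity on edges then yields this constancy, and to confirm the sign of $\rho$ (equivalently, that $a,b\geq 0$) so that $Y\succeq 0$. The remaining steps — the spectrum of $A = J + c A_G$, the trace normalisation, and the evaluation of $\langle J, Y\rangle$ via $\Tr(A_G P) = \lambda_n(A_G)\,m$ — are routine linear algebra once the symmetrisation is in place.
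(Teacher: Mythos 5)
Your proof is correct, and the first half coincides with what the paper does: the upper bound via the one-parameter family $J+cA_G$, whose spectrum is $\{n+cd\}\cup\{c\lambda_i(A_G)\}_{i\ge 2}$ by regularity, optimized at $c=n/(\lambda_n(A_G)-\lambda_1(A_G))$, is exactly the $k=1$ case of the argument the paper gives for theorem \ref{Thm:EdgeTransitiveThetaK}. Where you genuinely diverge is the equality claim. The paper (following Lov\'asz) stays on the primal side of \eqref{eqn:ThetaK_eigenvalues}: it uses convexity of $\lambda_1(\cdot)$ to average an optimal $A\in\mathcal{A}(G)$ over $\mathrm{Aut}(G)$, and edge-transitivity forces the averaged matrix into the family $J+xA_G$, so the minimum over that family is already the true optimum. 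You instead exhibit an explicit dual certificate $Y=aJ+bP$ for \eqref{eqn:ThetaK_SDP2}, with $P$ the projector onto the $\lambda_n$-eigenspace. Your worries about that step are unfounded: $P$ is a polynomial in $A_G$ (Lagrange interpolation on the distinct eigenvalues), hence commutes with every automorphism, so edge-transitivity does give $P_{ij}\equiv\rho$ on edges; $\Tr(A_GP)=\lambda_n(A_G)\,m=nd\rho$ gives $\rho<0$, hence $a=-b\rho>0$ for $b>0$; and orthogonality of $\vec 1$ to the $\lambda_n$-eigenspace makes $Y\succeq 0$ immediate, with $\langle J,Y\rangle=an^2=n\lambda_n(A_G)/(\lambda_n(A_G)-\lambda_1(A_G))$ after normalizing $an+bm=1$. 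The trade-off: your dual-certificate route is more concrete (it produces an optimal solution of \eqref{eqn:ThetaK_SDP2} and is the standard proof of the Hoffman ratio bound), while the paper's averaging argument is what actually scales to $\vartheta_k$, since $\vec{S}_k$ is convex for every $k$ whereas a matching rank-$k$ dual certificate is harder to write down in closed form. One cosmetic caveat for either route: the statement implicitly assumes $G$ has at least one edge, so that $\lambda_n(A_G)<0$ and the denominator is nonzero.
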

For a finite set of real numbers $P$, we denote by $ \vec{S}_k(P)$ the sum of the largest $k$ elements in  $P$. Now, we  state our result.
\begin{theorem}
\label{Thm:EdgeTransitiveThetaK}
For any regular graph $G$ of order $n$, we have
\begin{equation}
    \vartheta_k(G) \leq \min_x \vec{S}_k(\sigma(J+xA_G)) \leq n+\frac{n}{\lambda_n(A_G) - \lambda_1(A_G)} \big( \lambda_1(A_G) + \sum_{i=0}^{k-2} \lambda_{n-i}(A_G) \big),
\end{equation}
where we set the summation equal to 0 when $k=1$ and $\sigma( \cdot )$ denotes the spectrum of a matrix. The first inequality holds with equality if $G$ is also edge-transitive.
\end{theorem}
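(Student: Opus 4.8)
The plan is to prove the two inequalities in turn and then treat the equality claim by symmetrization. For the first inequality the key observation is that $J + xA_G \in \mathcal{A}(G)$ for every $x \in \mathbb{R}$: the off-diagonal entries of $A_G$ vanish exactly on the non-edges and its diagonal is zero, so $(J + xA_G)_{ij} = 1$ whenever $(i,j) \notin E(G)$, which is the only requirement in \eqref{eqn:AGset}. Since $\vec{S}_k(\sigma(J + xA_G)) = \sum_{i=1}^k \lambda_i(J + xA_G)$, the minimization \eqref{eqn:ThetaK_eigenvalues} immediately gives $\vartheta_k(G) \le \vec{S}_k(\sigma(J + xA_G))$ for each $x$, and taking the minimum over $x$ yields the first inequality.

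For the second inequality I would use regularity to compute the spectrum explicitly. If $G$ is $d$-regular then $A_G \vec{1} = d \vec{1}$ and $J A_G = A_G J = dJ$, so $J$ and $A_G$ commute and are simultaneously diagonalizable in an orthonormal basis starting with $\vec{1}/\sqrt{n}$. Hence $\sigma(J + xA_G) = \{\, n + x\lambda_1(A_G)\,\} \cup \{\, x\lambda_j(A_G) : 2 \le j \le n \,\}$, the first value arising from the $\vec{1}$-direction. Evaluating at the Lov\'asz choice $x = \tfrac{n}{\lambda_n(A_G) - \lambda_1(A_G)} < 0$ forces $n + x\lambda_1(A_G) = x\lambda_n(A_G)$; because $x < 0$ reverses the order of the remaining eigenvalues, the $k$ largest eigenvalues are $n + x\lambda_1(A_G)$ together with $x\lambda_{n-i}(A_G)$ for $i = 0, \dots, k-2$. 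Summing gives $\vec{S}_k = n + x\bigl(\lambda_1(A_G) + \sum_{i=0}^{k-2} \lambda_{n-i}(A_G)\bigr)$, and substituting the value of $x$ reproduces the stated bound; since $\min_x \vec{S}_k$ is at most its value at this particular $x$, the inequality follows. The case $k = 1$ recovers theorem \ref{lemma:LovaszThetaUB}.

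For the equality when $G$ is edge-transitive I would argue as in theorem \ref{Thm:OddCycleTheta2}. Let $A^\ast$ be optimal for \eqref{eqn:ThetaK_eigenvalues} and let $\bar A$ be its average over the automorphism group, $\bar A = \tfrac{1}{|\mathrm{Aut}(G)|} \sum_p P_p A^\ast P_p^\top$, with $P_p$ the permutation matrices of the automorphisms. Each conjugate lies in $\mathcal{A}(G)$ and shares the spectrum of $A^\ast$, and since $A \mapsto \sum_{i=1}^k \lambda_i(A)$ is convex (a maximum of the linear functionals $\langle A, P \rangle$ over rank-$k$ orthogonal projections $P$), the matrix $\bar A$ is again optimal. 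As $\bar A$ is $\mathrm{Aut}(G)$-invariant, edge-transitivity forces all its edge-entries to a common value while the diagonal and non-edges stay $1$; thus $\bar A = J + xA_G$ for some $x$. Then $\vartheta_k(G) = \vec{S}_k(\sigma(J + xA_G)) \ge \min_x \vec{S}_k(\sigma(J + xA_G))$, which combined with the first inequality gives equality.

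The two inequalities are essentially bookkeeping once feasibility of $J + xA_G$ and the commuting spectrum are noted; the main obstacle is the equality case, where the work lies in justifying that symmetrization preserves both feasibility and optimality (via convexity of the top-$k$ eigenvalue sum) and that edge-transitivity is strong enough to collapse $\bar A$ into the single-parameter family $J + xA_G$. A secondary point requiring care is the eigenvalue ordering after the substitution $x < 0$, to be sure the correct $k$ terms enter $\vec{S}_k$.
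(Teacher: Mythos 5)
Your proposal is correct and follows essentially the same route as the paper's proof: feasibility of $J+xA_G$ in $\mathcal{A}(G)$, the spectrum computation $\sigma(J+xA_G)=\{n+x\lambda_1\}\cup\{x\lambda_j\}_{j\ge 2}$ via regularity, evaluation at the Lov\'asz point $x=n/(\lambda_n-\lambda_1)$, and the equality case by averaging an optimal $A^\ast$ over $\mathrm{Aut}(G)$ using convexity of the top-$k$ eigenvalue sum together with edge-transitivity to collapse the average to the family $J+xA_G$. Your write-up merely makes explicit two details the paper leaves implicit (the eigenvalue reordering when $x<0$ and why the averaged matrix stays feasible and optimal).
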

\begin{proof}
The proof is an extension of \citeauthor{lovasz1979shannon}' \cite{lovasz1979shannon} proof of theorem \ref{lemma:LovaszThetaUB}. Let $G$ be a regular graph of order $n$. For notational convenience, we write $A_G = A$ and $\lambda_i(A_G) = \lambda_i$. Since $G$ is a regular graph, vector $\vec{1}$ is an eigenvector of $A$. Let $v \neq \vec{1}$ be an eigenvector of $A$. As $A$ is symmetric, its eigenvectors are orthogonal. Thus $\vec{1}^\top v = 0$, which implies that $J v = 0$. Thus the eigenvectors of $A$ are also eigenvectors of $J+xA$. In particular, we have
\begin{align}
\label{eqn:matrixJ_spectrumTransform}
\sigma(J+xA) = \{ n + x \lambda_1, \, x \lambda_2, \,  \ldots , \, x \lambda_n \},
\end{align}
for any $x \in \mathbb{R}$. Note that $J+xA \in \mathcal{A}(G)$, see \eqref{eqn:AGset}. Therefore, it follows from  \eqref{eqn:ThetaK_eigenvalues} that
\begin{equation*}
    \vartheta_k(G) \leq \underset{x}{\min} \, \vec{S}_k(\sigma(J+x A)).
\end{equation*}
Minimizing $\vec{S}_k(\sigma(J+xA))$ can be done analytically when $k=1$. \citeauthor{lovasz1979shannon} \cite{lovasz1979shannon} showed that setting $x = \frac{n}{\lambda_n - \lambda_1}$ minimizes $\vec{S}_k(\cdot)$ when $k=1$. Setting $x$ to this negative value provides the second upper bound in the theorem. Note that $x = \frac{n}{\lambda_n - \lambda_1}$ implies that $n + x\lambda_1 = x\lambda_n$.

We now prove that that $\vartheta_k(G) = \min_x \vec{S}_k(\sigma(J+xA))$ when $G$ is edge-transitive. Assume that $G$ is edge-transitive. It is known that the sum of the $k$ largest eigenvalues of a matrix is a convex function (\citeauthor{overton1993optimality} \cite{overton1993optimality}). Thus the average over all optimal solutions to \eqref{eqn:ThetaK_eigenvalues} of all automorphisms of $G$ is also optimal. Since $G$ is edge-transitive, this average is of the form $J+xA$, which proves the equality claim.
\end{proof}

We remark that theorem \ref{Thm:OddCycleTheta2} can also be proven by applying theorem \ref{Thm:EdgeTransitiveThetaK}.

To obtain sharper bounds for $\vartheta_k(G)$, one can minimize $\vec{S}_k(\sigma(J+xA))$, or compute $\vartheta_k(G)$ directly. Note that computing $\vartheta_k(G)$ by interior point methods is computationally demanding  already for some graphs with 200 vertices, see \cite{kuryatnikova2020maximum}.
In general, $\vec{S}_k(\sigma(J+xA))$ is the sum of the $k$ largest linear functions given by $\sigma(J+xA)$. \citeauthor{ogryczak2003minimizing} \cite{ogryczak2003minimizing} consider this problem which they show is solvable in linear time, but unfortunately, obtaining a general solution is not possible.
When we consider specific graphs, and $\sigma(J+xA)$ is thus explicit, minimizing $\vec{S}_k(\cdot)$ can be done analytically, as we show for the Kneser graph.\\

\citeauthor{lovasz1979shannon} \cite{lovasz1979shannon} proved that
$ \vartheta(K(n,m)) = \binom{n-1}{m-1},$
where $K(n,m)$ is the Kneser graph, see definition \ref{Def:KneserGraph}.
The Kneser graph $K(n,m)$ is regular of valency $ \binom{n-m}{m}$.
We provide an explicit expression for $\vartheta_k(K(n,m))$.
\begin{theorem}
\label{Thm:KneserGraphThetak}
For $k \leq \lfloor \frac{n}{m} \rfloor$ and $1 \leq k \leq n-2m+1$, we have
\begin{equation*}
    \vartheta_k(K(n,m)) = k \vartheta(K(n,m)) = k \binom{n-1}{m-1}.
\end{equation*}
When $k > \frac{n}{m}$ or $k > n - 2m +1$
\begin{equation*}
    \vartheta_k(K(n,m)) = \binom{n}{m}.
\end{equation*}
\end{theorem}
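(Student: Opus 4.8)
The plan is to read off $\vartheta_k(K(n,m))$ from the eigenvalue characterization of theorem \ref{Thm:EdgeTransitiveThetaK}. The Kneser graph is regular of valency $\binom{n-m}{m}$ and edge-transitive (its automorphism group $S_n$ acts transitively on pairs of disjoint $m$-sets), so that theorem holds with equality: $\vartheta_k(K(n,m)) = \min_x \vec{S}_k(\sigma(J + xA))$, where $A$ is the adjacency matrix and I write $N := \binom{n}{m}$ for the number of vertices and $\beta := \binom{n-1}{m-1} = \vartheta(K(n,m))$. The whole argument thus becomes a one-variable minimization of a convex, piecewise-linear function. I would first record the classical spectrum of $K(n,m)$: the $m+1$ distinct eigenvalues $(-1)^i\binom{n-m-i}{m-i}$, $i=0,\dots,m$, with multiplicities $\binom{n}{i}-\binom{n}{i-1}$. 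The largest is $\lambda_{\max}=\binom{n-m}{m}$ (simple, eigenvector $\vec{1}$); and since for $n>2m$ the magnitudes $\binom{n-m-i}{m-i}$ strictly decrease in $i$, the smallest is $\lambda_{\min}=-\binom{n-m-1}{m-1}$, of multiplicity $n-1$. The boundary case $n=2m$, where $K(2m,m)$ is a perfect matching and the $\lambda_i$ collapse to $\pm1$, I would dispose of separately and directly.

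Next I would evaluate $\vec{S}_k$ at Lov\'asz' point $x^\star = N/(\lambda_{\min}-\lambda_{\max})<0$. By \eqref{eqn:matrixJ_spectrumTransform}, $\sigma(J+xA)$ consists of the simple value $N+x\lambda_{\max}$ together with $x\lambda$ for every nontrivial eigenvalue $\lambda$ of $A$. At $x^\star$ one has $N+x^\star\lambda_{\max}=x^\star\lambda_{\min}=\beta$, so the value $\beta$ is shared by the $\vec{1}$-direction and the whole $\lambda_{\min}$-eigenspace; because $|\lambda|<|\lambda_{\min}|$ for every other nontrivial $\lambda$, all remaining eigenvalues $x^\star\lambda$ are strictly smaller. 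Hence $\beta$ is the top eigenvalue of $J+x^\star A$ with multiplicity exactly $1+(n-1)=n$, and $\vec{S}_k(\sigma(J+x^\star A))=k\beta$ for every $k\le n$.

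It remains to confirm that $x^\star$ actually minimizes the convex map $x\mapsto \vec{S}_k(\sigma(J+xA))$ precisely when $k\le\lfloor n/m\rfloor$. Here I would use the one-sided derivatives at $x^\star$, each equal to the sum of the slopes $\lambda$ of the $k$ eigenvalue-lines active on that side. Just left of $x^\star$ the top $k$ lines are $k$ copies of $x\lambda_{\min}$ (slope $k\lambda_{\min}<0$); just right they are $N+x\lambda_{\max}$ together with $k-1$ copies of $x\lambda_{\min}$ (slope $\lambda_{\max}+(k-1)\lambda_{\min}$). Using $\binom{n-m}{m}=\tfrac{n-m}{m}\binom{n-m-1}{m-1}$ one gets $\lambda_{\max}+(k-1)\lambda_{\min}=\binom{n-m-1}{m-1}\bigl(\tfrac{n-m}{m}-(k-1)\bigr)$, which is $\ge0$ exactly when $k\le n/m$. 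So for $k\le\lfloor n/m\rfloor$ the left derivative is $\le0$ and the right derivative is $\ge0$, and convexity makes $x^\star$ a global minimizer, giving $\vartheta_k(K(n,m))=k\beta=k\binom{n-1}{m-1}$.

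For the complementary range I would obtain $\vartheta_k=N$ in two overlapping ways. If $k>n/m$, the same slope computation gives $\lambda_{\max}+(k-1)\lambda_{\min}<0$, so the minimizer of the convex $\vec{S}_k$ slides to $x=0$, where $J$ has spectrum $\{N,0,\dots,0\}$ and $\vec{S}_k=N$; hence $\vartheta_k=N$. Separately, whenever $k>n-2m+1$, i.e. $k\ge n-2m+2=\chi(K(n,m))$ by the Lov\'asz--Kneser theorem, proposition \ref{Thm:ChromGeneralLB} gives $\vartheta_k=N$ at once. Since $k>\lfloor n/m\rfloor$ forces $k>n/m$, these two facts jointly cover the whole complement of $k\le\min\{\lfloor n/m\rfloor,\,n-2m+1\}$ and agree with the value $k\beta$ on the boundary $k=n/m$, where $k\beta=\binom{n}{m}=N$; the upper bound $\vartheta_k\le k\beta$ is in any case free from \eqref{eqn:VarthetaIneq}. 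The main obstacle is the middle step: pinning down that the top eigenvalue at $x^\star$ has multiplicity exactly $n$ (resting on the strict decay of eigenvalue magnitudes for $n>2m$) and converting ``$0$ lies in the subgradient'' into the clean threshold $k\le n/m$; the boundary bookkeeping, together with the separate $n=2m$ case, is what needs the most care.
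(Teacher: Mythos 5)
Your proposal is correct and follows essentially the same route as the paper: invoke theorem \ref{Thm:EdgeTransitiveThetaK} to reduce $\vartheta_k(K(n,m))$ to the one-variable convex minimization of $\vec{S}_k(\sigma(J+xA))$, use the Kneser spectrum (with $\lambda_{\min}=-\binom{n-m-1}{m-1}$ of multiplicity $n-1$) to evaluate the Lov\'asz point $x^\star$, and establish global minimality via convexity together with the sign of $\lambda_{\max}+(k-1)\lambda_{\min}$, which yields exactly the threshold $k\le n/m$; the remaining range is dispatched through proposition \ref{Thm:ChromGeneralLB} and the Lov\'asz--Kneser theorem. The only differences are presentational (one-sided derivatives in place of the paper's explicit $\pm\varepsilon$ perturbations, a direct slope argument at $x=0$ where the paper uses monotonicity from proposition \ref{thm:ThetaKIncreasing}, and an explicit separate treatment of $n=2m$).
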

\begin{proof}
Note that $n$ does not refer to the number of vertices but to a parameter of the Kneser graph $K(n,m)$. We will use $v$ to denote the number of vertices of $K(n,m)$, i.e.,  $v = \binom{n}{m}$. Let $A$ be the adjacency matrix of $K(n,m)$, having eigenvalues $\lambda_1 \geq \ldots \geq \lambda_v$.
We compute the minimum of $\vec{S}_k(\sigma(J+xA))$, see theorem \ref{Thm:EdgeTransitiveThetaK}. Recall that  $\sigma(\cdot)$ denotes the spectrum of a matrix, and  $\vec{S}_k(\sigma(\cdot))$ the sum of $k$ largest eigenvalues in a matrix. Define $f_{k}(x) := \vec{S}_k(\sigma(J+xA)).$ For  $x^* = \frac{v}{\lambda_v - \lambda_1} <0$ and $k \leq v$, we have
\begin{equation}
    \label{eqn:KneserThetaK_eq1}
    f_k(x^*) = v + x^*( \lambda_1 + \sum_{i=0}^{k-2} \lambda_{v-i} ).
\end{equation}
The greatest and smallest eigenvalue of $A$ equal $\lambda_1 = \binom{n-m}{m}$ and $\lambda_v = -\binom{n-m-1}{m-1}$, with corresponding multiplicities 1 and $n-1$, see \cite{lovasz1979shannon}. Thus, in the case that $k \leq \lfloor n/m  \rfloor \leq n$, function $f_k$ is determined only by $\lambda_1$ and $\lambda_v$. More precisely,
\begin{equation}
    \label{eqn:KneserThetaK_eq2}
    f_k(x^*) = v + x^* \Bigg[ \binom{n-m}{m} - (k-1) \binom{n-m-1}{m-1} \Bigg] = v+x^*(\lambda_1 + (k-1)\lambda_v).
\end{equation}
Since $\vartheta(K(n,m)) = \vec{S}_1(\sigma(J+x^*A)) = x^* \lambda_v = v + x^* \lambda_1$ (see \cite{lovasz1979shannon}), we can rewrite \eqref{eqn:KneserThetaK_eq2} as
\begin{align}
    \label{eqn:KneserThetaK_eqRev1}
    f_k(x^*) = k x^* \lambda_v = k \vartheta(K(n,m)).
\end{align}
We show now that $x^*$ minimizes $f_k$ when $k< \frac{n}{m}$, $1 \leq k \leq n-2m+1$.  For any $\varepsilon >0$
we have
\begin{equation*}
    f_k(x^* + \varepsilon) = f_k(x^*) + \varepsilon \Bigg[ \binom{n-m}{m} - (k-1) \binom{n-m-1}{m-1} \Bigg],
\end{equation*}
and thus
$k < \frac{n}{m} \Longrightarrow \binom{n-m}{m} - (k-1) \binom{n-m-1}{m-1} > 0 ~\Longrightarrow~ f_k(x^* + \varepsilon) > f_k(x^*).$ Similarly,  for any sufficiently small $\varepsilon > 0$, we have
\begin{equation} \label{eqn:KneserThetaK_eq3}
 f_k(x^* - \varepsilon) = k(x^* - \varepsilon) \lambda_v = kx^* \lambda_v - k\varepsilon \lambda_v >x^* \lambda_v + (k-1) x^* \lambda_v = f_k(x^*).
\end{equation}
Here we used the fact that $x^* \lambda_v = v+x^* \lambda_1$. From the previous discussion it follows that for any sufficiently small positive $\varepsilon$ we have
\begin{equation*}
    f_k(x^*) < f_k(x^* \pm \varepsilon).
\end{equation*}
By convexity of $f_k$, $x^*$ is the global minimizer of $f_k$. From~\eqref{eqn:KneserThetaK_eqRev1}, the theorem follows for the case $k < \frac{n}{m}$.

Now we consider the case $k = \frac{n}{m}$, $1 \leq k \leq n-2m+1$. From \eqref{eqn:KneserThetaK_eq2} it follows that $f_k(x^*) = v.$ In fact, for any $\beta$ satisfying $x^* \leq \beta \leq 0$, $f_k(\beta) = v$. For any $\varepsilon > 0$,
\begin{equation}
    f_k(\varepsilon) = v + \varepsilon \Big( \sum_{i=1}^k \lambda_i \Big).
\end{equation}
As the $\lambda_i$ sum to zero, the sum of the $k$ largest $\lambda_i$ must be strictly positive. Thus $f_k(\varepsilon)>v$. The derivation from \eqref{eqn:KneserThetaK_eq3} is also valid for the case $k = \frac{n}{m}$. Invoking again the convexity of $f_k$ proves that $v$ is the minimum value of $f_k$. Thus
\begin{equation*}
\vartheta_{n/m}(K(n,m)) = \frac{n}{m} \binom{n-1}{m-1} = \binom{n}{m} = v.
\end{equation*}
It follows from proposition \ref{thm:ThetaKIncreasing} that $\vartheta_k(K(n,m)) = v$ for $k > n/m$. Lastly, Kneser's conjecture (\citeauthor{kneser1955aufgabe}~\cite{kneser1955aufgabe}), which was proved by \citeauthor{lovasz1978kneser} \cite{lovasz1978kneser}, states that $\chi(K(n,m)) = n-2m+2$. The inequality $k > n - 2m+1$ is thus equivalent to $k \geq \chi(K(n,m))$. Therefore, we can apply proposition \ref{Thm:ChromGeneralLB} to prove the last claim.
\end{proof}

Since the Johnson graphs  (definition \ref{Def:JohnsonGraphs}) are edge-transitive (see e.g., \citeauthor{chen1987hamiltonian}  \cite{chen1987hamiltonian}) we can apply theorem \ref{Thm:EdgeTransitiveThetaK} to compute  the corresponding generalized $\vartheta$-number. The next theorem  generalizes theorem \ref{Thm:KneserGraphThetak}.
\begin{theorem}  \label{Thm:JhonsonGraphThetak}
For $0 \leq f < m$, $k \leq n$ and the Johnson graph $J(n,m,  f )$, it follows
\begin{equation*}
    \vartheta_k(J(n,m,   f  )) = \min \Bigg\{ k \vartheta \left (J(n,m, f  \right ) ), \binom{n}{m} \Bigg\}.
\end{equation*}
\end{theorem}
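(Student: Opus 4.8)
The plan is to mimic the structure of the Kneser graph proof (theorem \ref{Thm:KneserGraphThetak}), since the Johnson graph is edge-transitive and theorem \ref{Thm:EdgeTransitiveThetaK} therefore applies with equality. First I would recall that for an edge-transitive regular graph, $\vartheta_k(G) = \min_x \vec{S}_k(\sigma(J + xA))$, where $A = A_{J(n,m,f)}$. Using \eqref{eqn:matrixJ_spectrumTransform}, the spectrum of $J + xA$ is $\{v + x\lambda_1, x\lambda_2, \ldots, x\lambda_v\}$ with $v = \binom{n}{m}$, so the whole problem reduces to understanding the eigenvalues $\lambda_i$ of the Johnson graph and then analytically minimizing the sum of the $k$ largest transformed eigenvalues as a function of $x$.

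The key structural input is the eigenvalue information for $J(n,m,f)$. I would invoke the fact that the Johnson graphs are members of the Johnson association scheme, so all adjacency matrices $A$ (for any fixed intersection parameter $f$) are simultaneously diagonalized and their eigenvalues are given by Eberlein polynomials, indexed by $j \in \{0,1,\ldots,m\}$ with multiplicities $\binom{n}{j} - \binom{n}{j-1}$. The decisive point, exactly as in the Kneser case, is to identify the largest eigenvalue $\lambda_1$ (the valency, with multiplicity one and eigenvector $\vec{1}$) and the smallest eigenvalue $\lambda_v$; then for the optimal $x^* = \frac{v}{\lambda_v - \lambda_1} < 0$ one gets $v + x^*\lambda_1 = x^*\lambda_v$, and $\vartheta(J(n,m,f)) = x^*\lambda_v = v + x^*\lambda_1$ by theorem \ref{lemma:LovaszThetaUB}. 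The argument that $x^*$ remains the minimizer of $f_k(x) = \vec{S}_k(\sigma(J + xA))$ for the relevant range of $k$ proceeds by the same one-sided perturbation estimates as in \eqref{eqn:KneserThetaK_eq3}: for $\varepsilon > 0$, $f_k(x^* - \varepsilon) = k(x^*-\varepsilon)\lambda_v > f_k(x^*)$ using $x^*\lambda_v = v + x^*\lambda_1$, while $f_k(x^* + \varepsilon) > f_k(x^*)$ follows once the sum $\lambda_1 + \sum_{i=0}^{k-2}\lambda_{v-i}$ stays nonnegative; convexity of $f_k$ (via \citeauthor{overton1993optimality} \cite{overton1993optimality}) then upgrades these local inequalities to global minimality, giving $f_k(x^*) = kx^*\lambda_v = k\vartheta(J(n,m,f))$. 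The cap at $\binom{n}{m}$ comes from proposition \ref{thm:ThetaKIncreasing} and proposition \ref{Thm:ChromGeneralLB}, handling the regime where $k\vartheta$ would exceed $v$.

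The main obstacle I anticipate is precisely the eigenvalue bookkeeping that was effortless for the Kneser graph ($f = 0$). For general $f$ the second-largest and smallest eigenvalues, and their multiplicities, are governed by Eberlein polynomials whose signs and orderings are not as transparent as the two-value spectrum $\{\binom{n-m}{m}, -\binom{n-m-1}{m-1}\}$ of $K(n,m)$. To minimize $\vec{S}_k(\cdot)$ cleanly I need to know that the $k$ largest eigenvalues of $J + x^*A$ are in fact contributed by $\lambda_1$ together with the smallest eigenvalues $\lambda_v, \lambda_{v-1}, \ldots$ (since $x^* < 0$ flips the order), and that the relevant comparison $\lambda_1 + (k-1)\lambda_v \geq 0$ or the analogous multiplicity-aware version holds in the stated range $k \leq n$. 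I expect the cleanest route is to avoid computing the full spectrum explicitly and instead argue abstractly: because $J(n,m,f)$ is edge-transitive, theorem \ref{Thm:EdgeTransitiveThetaK} gives the equality $\vartheta_k = \min_x \vec{S}_k(\sigma(J+xA))$ directly, and then combine this with the already-known value of $\vartheta(J(n,m,f))$ and the monotonicity/boundedness from propositions \ref{Thm:ChromGeneralLB} and \ref{thm:ThetaKIncreasing} to pin down $\min\{k\vartheta, \binom{n}{m}\}$ without a case-by-case eigenvalue analysis, treating the Kneser computation as the template for why the minimizer stays at $x^*$ across the linear regime.
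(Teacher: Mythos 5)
Your skeleton is the same as the paper's: edge-transitivity plus theorem \ref{Thm:EdgeTransitiveThetaK} reduces everything to minimizing $\vec{S}_k(\sigma(J+xA))$, the candidate minimizer is $x^* = v/(\lambda_v-\lambda_1)$ with $v+x^*\lambda_1 = x^*\lambda_v$, and the cap at $\binom{n}{m}$ comes from the general upper bounds. You also correctly diagnose the one genuinely new difficulty relative to the Kneser case: for general $f$ the intermediate Eberlein eigenvalues are opaque, so you must justify that the $k$ largest elements of $\sigma(J+x^*A)$ really are $v+x^*\lambda_1$ together with $k-1$ copies of $x^*\lambda_v$, i.e.\ that $f_k(x^*) = v + x^*(\lambda_1+(k-1)\lambda_v) = k\vartheta$.

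The gap is that, having named this obstacle, you do not resolve it. Your proposed escape --- ``argue abstractly'' from edge-transitivity, the known value of $\vartheta(J(n,m,f))$, and propositions \ref{Thm:ChromGeneralLB} and \ref{thm:ThetaKIncreasing} --- cannot work: those ingredients only yield the upper bound $\vartheta_k \leq \min\{k\vartheta, \binom{n}{m}\}$ (which already follows from \eqref{eqn:VarthetaIneq}), whereas the substance of the theorem is the matching lower bound, and monotonicity of $(\vartheta_k)_k$ gives no lower bound beyond $\vartheta_{k-1}$. The resolution is sitting in the multiplicity formula you already quoted: $\mu_i = \binom{n}{i}-\binom{n}{i-1} \geq n-1$ for every $i \geq 1$ once $n > 4$ (the cases $n \leq 4$ are checked directly --- note $\mu_2 = 2 < n-1$ when $n=4$, so this is not vacuous). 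In particular the smallest eigenvalue $\lambda_v$ has multiplicity at least $n-1 \geq k-1$, so for $k \leq n$ the $k$ largest transformed eigenvalues at $x^*$ are exactly $v+x^*\lambda_1$ plus $k-1$ copies of $x^*\lambda_v$, \emph{irrespective} of how the remaining Eberlein eigenvalues are ordered or signed. That single observation closes the bookkeeping and makes the Kneser computation carry over verbatim; without it (or an appeal to vertex-transitivity in the style of \eqref{eqn:ConstantRowSumResult}, which you do not invoke), the proof is incomplete.
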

\begin{proof}
Let  $v$ denote the order of $J(n,m, f  )$, i.e., $v = \binom{n}{m}$. The multiplicities $\mu_i$ of the (not necessarily distinct) $m+1$ eigenvalues $\lambda_i$ of the adjacency matrix $A$ of $J(n,m,f )$ are
\begin{equation*}
    \mu_i = \binom{n}{i} - \binom{n}{i-1}, \quad 0 \leq i \leq m,
\end{equation*}
see e.g., \citeauthor{brouwer1989special} \cite{brouwer1989special}.
We set $\mu_0 = 1$, corresponding to the multiplicity of $\lambda_1$. The multiplicities are unordered, that is, the multiplicity of $\lambda_i$ does not necessarily equal $\mu_i$. When $n \leq 4$, the theorem can be verified numerically. We will now assume that $n>4$. Then $\mu_i \geq n-1$, and in particular, the multiplicity of $\lambda_v$ is at least $n-1$. From theorem \ref{Thm:EdgeTransitiveThetaK} it follows that
\begin{equation*}
    \vartheta_k(J(n,m,   f )) = \min \vec{S}_k(\sigma(J+xA)).
\end{equation*}
Define
$f(x) := \vec{S}_k(\sigma(J+xA)).$ Because the multiplicity of $\lambda_v$ is at least $n-1$, for $k \leq n$, we have that $x^* = n/(\lambda_v - \lambda_1)$ minimizes $f(x)$, and $f(x^*) = v +x^*(\lambda_1 + (k-1) \lambda_v).$ If $f(x^*) > n$, the minimum will occur at $f(0) = n$.
\end{proof}
We can explicitly compute $\vartheta_k(J(n,m,  m-1))$. Taking the eigenvalues of this graph from  \cite{brouwer1989special}, chapter 9, we find
\begin{equation*}
    \vartheta_k(J(n,m,   m-1)) =  \frac{k}{n+1}\binom{n+1}{m}, \quad k \leq n-m+1.
\end{equation*}
For $k^*=n-m+1$, we see that $\vartheta_{k^*}(J(n,m,  m-1))$ equals the number of vertices in $J(n,m, m-1 )$.

\subsection{Relation between $\vartheta(K_k \square G)$ and $\vartheta_k(G)$}
\label{Section:AnalysisOfCart}

\citeauthor{gvozdenovic2008operator} \cite{gvozdenovic2008operator} show how to exploit an upper bound on the independence number of a graph to obtain a lower bound for the chromatic number of its complement graph.
They do not consider  the generalized $\vartheta$-number  in the bounding procedure.
\citeauthor{kuryatnikova2020maximum} \cite{kuryatnikova2020maximum} exploit the generalized $\vartheta$-number to compute bounds on the chromatic number of a graph.
For some graphs,  the lower bounds on $\chi(G)$ from \cite{kuryatnikova2020maximum}  coincide with the bounds obtained by using the theta function  as suggested by  \cite{gvozdenovic2008operator}.
Here we explain that finding by analyzing $\vartheta(K_k \square G)$ for symmetric graphs.
We also show that the gap between  $\vartheta_k(G)$ and  $\vartheta(K_k \square G)$ can be  arbitrarily large.

\citeauthor{chvatal1973edmonds} \cite{chvatal1973edmonds}  noted that
\begin{equation*}
    \alpha_k(G) = |V(G)| \iff \chi(G) \leq k.
\end{equation*}
Stated differently, $\chi(G) = \min \{ k \, | \,  k\in \mathbb{N}, \, \alpha_k(G) = |V(G)|  \,\},$ or in plain words,  the $k$ independent sets giving $\alpha_k(G)$ correspond to the color classes of $G$ in an optimal coloring.
Analogue to $\chi_k(G) = \chi(G \circ K_k)$, it is known (cf. \cite{kuryatnikova2020maximum}) that
\begin{equation}
    \label{eqn:MKCStoMS}
    \alpha_k(G) = \alpha(K_k \square G),
\end{equation}
where $K_k  \square G$ is the graph Cartesian product, see definition \ref{Def:GraphProducts}.
For a graph parameter $\beta( G )$ that satisfies
\begin{equation*}
  \alpha(G) \leq \beta(G) \leq \chi(\overline{G}),
\end{equation*}
\citeauthor{gvozdenovic2008operator} \cite{gvozdenovic2008operator} define $\mathrm{\Psi}_\beta(G)$ as follows:
\begin{equation*}
    \mathrm{\Psi}_\beta(G) := \min \{ k \, | \, k\in \mathbb{N}, \,  \beta(K_k \square G) = |V(G)| \,   \}.
\end{equation*}
Then $\mathrm{\Psi}_\alpha (G) = \chi(G)$. The operator $\mathrm{\Psi}_\beta(\cdot)$ can be applied to a variety of graph parameters $\beta(G)$ and enables obtaining a hierarchy  of bounds for $\chi(G)$ from a hierarchy  of bounds for $\alpha(G)$. For example, when $\beta(G) = \vartheta(G)$  \citeauthor{gvozdenovic2008operator} \cite{gvozdenovic2008operator} show that $\mathrm{\Psi}_\vartheta (G) = \lceil \vartheta( \overline{G} ) \rceil.$ It follows from  \eqref{eqn:MKCStoMS} that parameters $\vartheta_k(G)$ and $\vartheta(K_k \square G)$ both provide upper bounds on $\alpha_k(G)$. Therefore, it is  natural to compare $\mathrm{\Psi}_\vartheta (G)$ with
\begin{equation}
    \label{eqn:PsiThetaCartEquality}
    \mathrm{\Psi}_{\vartheta_k} (G) = \min \{ k \, | \, k\in \mathbb{N}, \, \vartheta_k(G) = |V(G)| \, \}.
\end{equation}
This  comparison boils down to the comparison of $\vartheta(K_k \square G)$ and  $\vartheta_k(G)$.
Numerical results in  \cite{kuryatnikova2020maximum}  suggest the following conjecture.
\begin{conjecture}
\label{conj:SDPinequalities}
For any graph $G$ and any natural number $k$,
$\vartheta(K_k \square G) \leq \vartheta_k(G).$ Equality holds when $\vartheta_k(G) = k \vartheta(G)$.
\end{conjecture}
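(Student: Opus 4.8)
Since the statement is posed as a conjecture, the outline below is a strategy rather than a finished argument, and I flag where it stalls. The natural starting point is to write both sides as eigenvalue optimizations. For the Cartesian product I would use \eqref{eqn:ThetaK_eigenvalues} with parameter $1$, i.e. $\vartheta(K_k\square G)=\min\{\lambda_1(B):B\in\mathcal A(K_k\square G)\}$. Since $A_{K_k\square G}=I_k\otimes A_G+(J_k-I_k)\otimes I_n$, a matrix $B$ lies in $\mathcal A(K_k\square G)$, see \eqref{eqn:AGset}, exactly when, written as a $k\times k$ array of $n\times n$ blocks $B^{ab}$, its diagonal blocks satisfy $B^{aa}\in\mathcal A(G)$ and its off-diagonal blocks $B^{ab}$ ($a\neq b$) have every off-diagonal entry equal to $1$ and a free diagonal. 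To prove $\vartheta(K_k\square G)\le\vartheta_k(G)$ I would then exhibit one such $B$ with $\lambda_1(B)\le\vartheta_k(G)$, built from an $A^*\in\mathcal A(G)$ attaining $\vartheta_k(G)=\sum_{i=1}^k\lambda_i(A^*)$.

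The construction I would attempt places $A^*$ on every diagonal block and a common admissible matrix $J_n+D$, with $D$ diagonal, on every off-diagonal block; this is the most general $S_k$-invariant candidate, and $S_k$ (permuting the copies) is an automorphism group of $K_k\square G$, so by convexity of $\lambda_1$ (used in theorem \ref{Thm:EdgeTransitiveThetaK}) one may symmetrize without loss. Decomposing $\mathbb R^k=\langle\vec 1_k\rangle\oplus\vec 1_k^\perp$ splits the spectrum as $\sigma\bigl(A^*+(k-1)(J_n+D)\bigr)\cup\sigma\bigl(A^*-J_n-D\bigr)^{(k-1)}$, and one would choose $D$ to balance the two leading eigenvalues against $\sum_{i=1}^k\lambda_i(A^*)$. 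This is the \emph{main obstacle}: the forced unit off-diagonal entries inject a rank-one $+J_n$ (eigenvalue $n$) into the symmetric block and a $-J_n$ into the complementary block, and a diagonal $D$ cannot excise a rank-one term from a spectrum, so shrinking $\lambda_1$ of one block inflates it in the other. Hence the block-constant construction does not obviously meet the target for arbitrary $G$.

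The dual route meets a matching difficulty. Taking an optimizer of \ref{eqn:ThetaK_SDP2} for $\vartheta(K_k\square G)$ and averaging over $S_k$ yields block-constant data with common diagonal block $P$ and off-diagonal block $Q$ obeying $\Tr Q=0$ (off-diagonal blocks have zero diagonal), $P-Q\succeq 0$, $P+(k-1)Q\succeq 0$, and $P$ vanishing on $E(G)$, with objective $k\bigl(\langle J,P\rangle+(k-1)\langle J,Q\rangle\bigr)$. However $Q$ need not vanish on $E(G)$, and the zero-diagonal constraint on $Q$ blocks any attempt to fold $(P,Q)$ into a single matrix feasible for \ref{eqn:ThetaK_SDP2} for $G$ while preserving the objective and the normalization $\Tr=k$. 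Reconciling the edge-support and the trace normalization is the crux, and is presumably why the inequality is still open.

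For the equality claim, and granting the inequality, I would only need the reverse bound $\vartheta(K_k\square G)\ge k\vartheta(G)$ under the hypothesis $\vartheta_k(G)=k\vartheta(G)$. I would construct a feasible point of the max-formulation for $K_k\square G$ from an optimal $Y^*$ for \ref{eqn:ThetaK_SDP2} (so $\langle J,Y^*\rangle=k\vartheta(G)$ and $\Tr Y^*=k$) by spreading $Y^*$ over all $k^2$ blocks. The zero-diagonal requirement forces one to delete $\mathrm{diag}(Y^*)$ from the off-diagonal blocks, and a short computation shows this naive point attains only $k\vartheta(G)-(k-1)$; the extremal solution must therefore exploit genuine cross-copy correlation. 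Producing that correlated solution precisely when $\vartheta_k(G)=k\vartheta(G)$—for edge-transitive $G$ by lifting the optimizer $J+xA_G$ of theorem \ref{Thm:EdgeTransitiveThetaK}, where one expects the extra off-diagonal mass to be recoverable—would close the equality case, but bridging the $k-1$ gap is exactly the part I cannot yet complete.
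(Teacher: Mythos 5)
You are right to treat this statement as exactly what it is: the paper poses it as a conjecture, does not prove it in general, and still lists it as an open problem in the conclusion. What the paper does establish is the special case where $G$ is both vertex- and edge-transitive (theorem \ref{thm:CartesianThetaK}), showing $\vartheta(K_k\square G)=\min\{k\vartheta(G),n\}=\vartheta_k(G)$, together with a $\vartheta'$ analogue for strongly regular graphs (theorem \ref{thm:CartesianThetaKSRG}); it also shows the inequality can be far from tight (proposition \ref{prop:Large gap}). Your strategy for the tractable case is essentially the paper's: restrict $\mathcal A(K_k\square G)$ by symmetry to matrices of the form $I_k\otimes(J_n+xA_G)+(J_k-I_k)\otimes(J_n+yI_n)$, split the spectrum along $\langle\vec 1_k\rangle\oplus\vec 1_k^\perp$ via lemma \ref{Lemma:BlockFormSDP}, and optimize the two branch eigenvalues over $x,y$. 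The obstacles you flag for general $G$ are genuine: without transitivity the optimizer of \eqref{eqn:ThetaK_eigenvalues} need not have the form $J+xA$, the forced $\pm J_n$ contributions to the two spectral branches cannot be cancelled by a diagonal correction, and on the dual side the symmetrized off-diagonal blocks need not respect $E(G)$ or the trace normalization. So your proposal is an accurate map of why the conjecture is open rather than a proof of it; the one caveat worth adding is that even the equality claim is only established in the paper under the transitivity hypothesis, not under the weaker hypothesis $\vartheta_k(G)=k\vartheta(G)$ as literally stated in conjecture \ref{conj:SDPinequalities}, so your observation that the naive lifting of an optimal $Y^*$ falls short by $k-1$ is a fair indication that the general equality case also requires an idea not present in the paper.
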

We show below that the gap between $\vartheta_k(G)$ and $\vartheta(K_k \square G)$ can be made arbitrarily large.
We first state the following lemma that is needed in the rest of this section.
\begin{lemma}[\citeauthor{gvozdenovic2008operator} \cite{gvozdenovic2008operator}]
\label{Lemma:BlockFormSDP}
Given $A,B \in \mathbb{S}^n$ and $Y = I_k \otimes A + (J_k - I_k) \otimes B$, then $ Y \succeq 0$ if and only if $A - B \succeq 0$ and $A + (k-1)B \succeq 0$. Furthermore, $\sigma(Y) = \sigma(A+(k-1)B) \cup \sigma(A-B)^ {\{k-1\} }.$
\end{lemma}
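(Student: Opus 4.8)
The plan is to block-diagonalize $Y$ by a Kronecker similarity transformation, after which both claims can be read off directly. The essential observation is that $Y = I_k \otimes A + (J_k - I_k)\otimes B$ is built entirely from the two commuting matrices $I_k$ and $J_k$ in the first tensor factor, so a single orthogonal change of basis in $\mathbb{R}^k$ simultaneously diagonalizes both of them and decouples the blocks.

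First I would choose an orthogonal matrix $Q \in \mathbb{R}^{k \times k}$ whose first column is $\tfrac{1}{\sqrt{k}}\vec{1}$ and whose remaining $k-1$ columns form any orthonormal basis of $\{\vec{1}\}^\perp$. Since $J_k = \vec{1}\vec{1}^\top$ has eigenvalue $k$ on $\vec{1}$ and $0$ on $\{\vec{1}\}^\perp$, this gives $Q^\top J_k Q = \mathrm{Diag}(k,0,\ldots,0)$, while of course $Q^\top I_k Q = I_k$. I would then apply the orthogonal similarity $Q \otimes I_n$ to $Y$. Using the Kronecker mixed-product property $(Q^\top \otimes I_n)(M \otimes N)(Q \otimes I_n) = (Q^\top M Q)\otimes N$, I obtain
\begin{equation*}
(Q^\top \otimes I_n)\, Y\, (Q \otimes I_n) = I_k \otimes A + \mathrm{Diag}(k-1,-1,\ldots,-1)\otimes B,
\end{equation*}
which is the block-diagonal matrix $\mathrm{Diag}\big(A+(k-1)B,\ A-B,\ \ldots,\ A-B\big)$ with the block $A-B$ repeated $k-1$ times.

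Because $Q \otimes I_n$ is orthogonal, this transformation preserves both the spectrum and positive semidefiniteness. The spectrum claim $\sigma(Y) = \sigma(A+(k-1)B)\cup \sigma(A-B)^{\{k-1\}}$ is then immediate, as the eigenvalues of a block-diagonal matrix are the union (with multiplicity) of the eigenvalues of its blocks. Likewise, $Y \succeq 0$ holds if and only if every diagonal block is positive semidefinite, which is precisely the condition $A+(k-1)B \succeq 0$ and $A-B\succeq 0$.

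I expect no genuine obstacle here: the entire argument reduces to identifying the common eigenbasis of $I_k$ and $J_k$ and then carefully applying Kronecker-product algebra. The only point requiring minor care is verifying that $Q$ orthogonally diagonalizes $J_k$ into $\mathrm{Diag}(k,0,\ldots,0)$ in the stated ordering, so that the first block correctly receives the coefficient $k-1$ on $B$ and the remaining blocks receive $-1$; once the ordering is fixed, the bookkeeping in the mixed-product step is routine.
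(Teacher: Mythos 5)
Your proof is correct. Note that the paper does not prove this lemma at all --- it is stated as a cited result from Gvozdenovi\'c and Laurent --- so there is no in-paper argument to compare against; your block-diagonalization via the orthogonal similarity $Q \otimes I_n$, with $Q$ diagonalizing $J_k$ as $\mathrm{Diag}(k,0,\ldots,0)$, is the standard and complete way to establish both the positive-semidefiniteness equivalence and the spectral decomposition $\sigma(Y) = \sigma(A+(k-1)B) \cup \sigma(A-B)^{\{k-1\}}$.
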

Now, we are ready to present our result.
\begin{proposition} \label{prop:Large gap}
For any number $M \geq 0$, there exists a graph $G$ and integer $k$ such that
\begin{equation*}
    \vartheta_k(G) - \vartheta(K_k \square G) \geq M.
\end{equation*}
\end{proposition}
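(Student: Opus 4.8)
The plan is to reduce the statement to producing a single graph with a strictly positive gap and then to amplify that gap by a blow-up, so that it diverges. The natural building block is the graph $\GraphName$ of Theorem \ref{thm:DeltaKGraphName} together with $k=n-2$. For this pair one checks directly that $\alpha_{n-2}(\GraphName)=n-1$: dropping the cut vertex $n-1$ leaves $K_{n-2}\sqcup\{n\}$, which is $(n-2)$-colorable of order $n-1$, while the whole graph is not $(n-2)$-colorable because $\chi(\GraphName)=n-1$. On the other hand Theorem \ref{thm:DeltaKGraphName} gives
\[
\vartheta_{n-2}(\GraphName)=n-2+\frac{2}{n-3}\sqrt{(n-2)(n-4)}>n-1\qquad(n\ge 5),
\]
the inequality being equivalent to $3n^2-18n+23>0$. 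Thus $\vartheta_{n-2}(\GraphName)$ strictly exceeds $\alpha_{n-2}(\GraphName)$, which is the regime in which a gap can appear; note that $\vartheta_{n-2}(\GraphName)<(n-2)\vartheta(\GraphName)=2(n-2)$, so $\GraphName$ is not one of the graphs to which Conjecture \ref{conj:SDPinequalities} assigns equality, and no contradiction is risked.

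The two quantities to control are $\vartheta_{n-2}$ from below and $\vartheta(K_{n-2}\square\,\cdot\,)$ from above. For the first I would pass to the blow-up $\overline{K}_m\vee\GraphName$: the computation in the proof of Theorem \ref{thm:GraphProdDisjunction} already yields $\vartheta_{n-2}(\overline{K}_m\vee\GraphName)=m\,\vartheta_{n-2}(\GraphName)$, and replacing each vertex by an independent $m$-set multiplies the maximum $(n-2)$-colorable subgraph, so $\alpha_{n-2}(\overline{K}_m\vee\GraphName)=m(n-1)$. For the second, \eqref{eqn:MKCStoMS} supplies the lower sandwich $\vartheta(K_{n-2}\square G)\ge\alpha(K_{n-2}\square G)=\alpha_{n-2}(G)$, and the real task is the matching upper bound. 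Here I would use Lemma \ref{Lemma:BlockFormSDP}: the adjacency matrix of $K_k\square G$ has the block shape $I_k\otimes A_G+(J_k-I_k)\otimes I_n$, so after symmetrising an optimal solution of the theta-SDP over the $S_k$-action permuting the $k$ copies, one may restrict to matrices $A=I_k\otimes M+(J_k-I_k)\otimes N$ with $M\in\mathcal{A}(G)$. Lemma \ref{Lemma:BlockFormSDP} then gives $\sigma(A)=\sigma(M+(k-1)N)\cup\sigma(M-N)^{\{k-1\}}$, reducing the minimisation of $\lambda_1(A)$ to the original dimension; the goal is to exhibit $M,N$ certifying $\vartheta(K_{n-2}\square\GraphName)=n-1$ and, more generally, $\vartheta\big(K_{n-2}\square(\overline{K}_m\vee\GraphName)\big)=m(n-1)$.

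Granting these bounds the conclusion is immediate. For fixed $n\ge5$ and blow-up parameter $m$,
\[
\vartheta_{n-2}\!\big(\overline{K}_m\vee\GraphName\big)-\vartheta\!\big(K_{n-2}\square(\overline{K}_m\vee\GraphName)\big)=m\Big(\vartheta_{n-2}(\GraphName)-(n-1)\Big)=m\left(\frac{2}{n-3}\sqrt{(n-2)(n-4)}-1\right),
\]
and since the bracketed constant is fixed and positive, letting $m\to\infty$ makes the left-hand side exceed any prescribed $M$, which proves the proposition.

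The hard part is the upper bound on $\vartheta(K_{n-2}\square\,\cdot\,)$, and it splits into two delicate points. First, one must genuinely solve the reduced problem of Lemma \ref{Lemma:BlockFormSDP} for $\GraphName$ and verify that its optimum equals $\alpha_{n-2}(\GraphName)=n-1$; because $\GraphName$ is not regular, $\vec 1$ is not an eigenvector and $M,N$ cannot be simultaneously diagonalised with $A_{\GraphName}$, so this is a concrete finite computation rather than a spectral shortcut. Second, since blow-up and the Cartesian product do not commute as graph operations, the scaling $\vartheta(K_{n-2}\square(\overline{K}_m\vee\GraphName))=m(n-1)$ does not follow formally from the single-copy value and must be proved directly, again via Lemma \ref{Lemma:BlockFormSDP} after an additional symmetrisation over the $S_m$-action on each blown-up set. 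Should the blow-up route prove awkward, I would instead amplify by the disjoint union $t\cdot\GraphName$, where the Cartesian product distributes over disjoint unions and $\vartheta$ is additive, giving $\vartheta(K_{n-2}\square(t\cdot\GraphName))=t\,\vartheta(K_{n-2}\square\GraphName)$ exactly; there the only remaining issue is a matching lower bound on $\vartheta_{n-2}(t\cdot\GraphName)$ strictly above $t\,\vartheta(K_{n-2}\square\GraphName)$.
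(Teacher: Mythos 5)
Your proposal is a plan rather than a proof: the two bounds on which everything rests are explicitly deferred. You write that ``the goal is to exhibit $M,N$ certifying $\vartheta(K_{n-2}\square\GraphName)=n-1$'' and that the blow-up scaling $\vartheta\big(K_{n-2}\square(\overline{K}_m\vee\GraphName)\big)=m(n-1)$ ``must be proved directly,'' but neither is carried out, and without them the displayed chain of equalities in your third paragraph has no support. These are not routine verifications: as you yourself note, $\GraphName$ is not regular, so there is no spectral shortcut, and the Cartesian product does not interact cleanly with the blow-up. The paper's proof of proposition \ref{prop:Large gap} shows what filling such a gap actually requires — an explicit feasible matrix $X$ for \ref{eqn:ThetaK_SDP2} giving $\vartheta_{n/2}(\GraphName)\ge n/2+\sqrt{n-2}$, and explicit blocks $A,B$ fed into lemma \ref{Lemma:BlockFormSDP} giving $\vartheta(K_{n/2}\square\GraphName)\le n/2+1$. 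Note also the structural difference: the paper takes $k=n/2$ and lets $n\to\infty$, so the single-graph gap itself grows like $\sqrt{n}$ and no amplification is needed, whereas your choice $k=n-2$ yields only a bounded per-copy gap $\frac{2}{n-3}\sqrt{(n-2)(n-4)}-1\to 1$, forcing you into the amplification step that you cannot complete.

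The fallback you offer is moreover incorrect. For the disjoint union $t\cdot\GraphName$ the identity $\vartheta(K_k\square(t\cdot G))=t\,\vartheta(K_k\square G)$ does hold, but $\vartheta_k$ is \emph{not} multiplicative over disjoint unions for fixed $k$: in \ref{eqn:ThetaK_SDP2} the trace budget $\langle I,Y\rangle=k$ is shared among the $t$ components, so $\vartheta_{n-2}(t\cdot\GraphName)=\max\{\sum_i\vartheta_{k_i}(\GraphName):\sum_i k_i=n-2\}\le (n-2)\,\vartheta(\GraphName)=2(n-2)$ stays bounded as $t\to\infty$, while $t\,\vartheta(K_{n-2}\square\GraphName)\ge t\,\alpha_{n-2}(\GraphName)=t(n-1)$ diverges. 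The difference therefore tends to $-\infty$, the opposite of what you need. The correct parts of your write-up — $\alpha_{n-2}(\GraphName)=n-1$, the inequality $\vartheta_{n-2}(\GraphName)>n-1$ via $3n^2-18n+23>0$, and $\vartheta_k(\overline{K}_m\vee G)=m\,\vartheta_k(G)$ from theorem \ref{thm:GraphProdDisjunction} — are sound, but they do not close either of the two essential gaps.
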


\begin{proof}
Consider again graph $\GraphName$, as defined in \eqref{eqn:SpecialGraphDef} for even $n$ and set $k = n/2$. We will show that $\vartheta_{n/2}(\GraphName) - \vartheta(K_{n/2} \square \GraphName)$ is increasing in $n$. Let $p = 1/(2\sqrt{n-2})$ and consider first
\begin{equation*}
    X = \begin{bmatrix}
    \frac{1}{2} I_{n-2} & \vec{0}_{n-2} & p\vec{1}_{n-2} \\[1ex]
    \vec{0}^\top_{n-2} & \frac{1}{2} & 0 \\[1ex]
    p \vec{1}^\top_{n-2} & 0 & \frac{1}{2}
    \end{bmatrix}.
\end{equation*}
Taking the Schur complement of the bottom right $2 \times 2$ block of $X$ shows that $0 \preceq X \preceq I$ (see the proof of theorem \ref{thm:DeltaKGraphName} for more details). Combined with the fact that $\langle I, X \rangle = k$, it follows that $X$ is feasible for \ref{eqn:ThetaK_SDP2}. Hence,
\begin{equation}
    \label{eqn:ThetaHalfV1}
    \vartheta_{n/2}(\GraphName) \geq \langle J, X \rangle = n/2 + \sqrt{n-2}.
\end{equation}
As for $\vartheta(K_{n/2} \square G)$, let
\begin{equation*}
    A = \begin{bmatrix}
    -kJ_{n-1} + (k+1)I_{n-1} & \vec{1}_{n-1} \\[1ex]
    \vec{1}^\top_{n-1} & 1
    \end{bmatrix}, \,
    ~B = \begin{bmatrix}
    J_{n-1} & \vec{1}_{n-1} \\[1ex]
    \vec{1}^\top_{n-1} & -k
    \end{bmatrix},
\end{equation*}
and set $Y := I \otimes A + (J-I) \otimes B$.
Then matrix $Y \in \mathcal{A}(K_{n/2} \square G)$, see \eqref{eqn:AGset}. Furthermore, matrix $Y$ is of the form described in lemma \ref{Lemma:BlockFormSDP}. Then the largest eigenvalue of $Y$ satisfies
$\lambda_1(Y) = \max\{ \lambda_1(A-B), \, \lambda_1(A+(k-1)B) \}.$ Similar to the methods used in the proof of theorem \ref{thm:DeltaKGraphName}, it can be shown that $\lambda_1(Y) = k+1$. Thus,
\begin{equation}
    \label{eqn:ThetaHalfV2}
    \vartheta (K_{n/2} \square \GraphName) \leq \lambda_1(Y) = n/2+1.
\end{equation}
Combining \eqref{eqn:ThetaHalfV1} and \eqref{eqn:ThetaHalfV2} for fixed $M$ and large enough (even) $n$, gives $\vartheta_{n/2}(\GraphName) - \vartheta(K_{n/2} \square \GraphName) \geq \sqrt{n-2} - 1 \geq M.$

\end{proof}
We prove conjecture \ref{conj:SDPinequalities} only for a particular  class of graphs. Let us first show the following result.
\begin{theorem}
\label{thm:CartesianThetaK}
Let $G$ be graph of order $n$ that is both edge-transitive and vertex-transitive. Then
\begin{equation*}
    \vartheta(K_k \square G) =  \min \{ k \vartheta(G) , n \} = \vartheta_k(G).
\end{equation*}
\end{theorem}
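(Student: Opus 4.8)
The statement packages two equalities, $\vartheta_k(G)=\min\{k\vartheta(G),n\}$ and $\vartheta(K_k\square G)=\min\{k\vartheta(G),n\}$, and the plan is to prove each by reducing it to an explicit optimization over a small family of symmetric matrices built from $A_G$ and $J$. Throughout I would use that vertex-transitivity makes $G$ regular, so that Theorem~\ref{lemma:LovaszThetaUB} applies with equality and $\vartheta(G)=n\lambda_n/(\lambda_n-\lambda_1)$, where $\lambda_1\ge\cdots\ge\lambda_n$ are the eigenvalues of $A_G$. The key bookkeeping device is the elementary identity that $\lambda_1+(k-1)\lambda_n$ is positive, zero, or negative precisely when $k\vartheta(G)$ is smaller than, equal to, or larger than $n$; this single sign condition decides the case split in both halves.

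For the equality $\vartheta_k(G)=\min\{k\vartheta(G),n\}$ I would invoke Theorem~\ref{Thm:EdgeTransitiveThetaK}, which holds with equality for the edge-transitive $G$, to write $\vartheta_k(G)=\min_x f_k(x)$ with $f_k(x):=\vec{S}_k(\sigma(J+xA_G))$, and then minimize $f_k$ using the spectrum \eqref{eqn:matrixJ_spectrumTransform}. This is exactly the computation carried out for the Kneser graph in the proof of Theorem~\ref{Thm:KneserGraphThetak}, now with a general spectrum. The function $f_k$ is convex, so it suffices to locate a point whose one-sided slopes straddle zero. At $x^{\ast}=n/(\lambda_n-\lambda_1)$ the values $n+x^{\ast}\lambda_1$ and $x^{\ast}\lambda_n$ coincide and equal $\vartheta(G)$, so the top eigenvalue of $J+x^{\ast}A_G$ is $\vartheta(G)$ with multiplicity one more than that of $\lambda_n$; a first-order perturbation then produces the slope $\lambda_1+(k-1)\lambda_n$ on one side. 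By the sign dictionary, when $k\vartheta(G)\le n$ this keeps $x^{\ast}$ optimal with $f_k(x^{\ast})=k\vartheta(G)$, while when $k\vartheta(G)>n$ the minimizer migrates to $x=0$, where $f_k(0)=\vec{S}_k(\sigma(J))=n$. The upper bound $\vartheta_k(G)\le\min\{k\vartheta(G),n\}$ is in any case immediate from \eqref{eqn:VarthetaIneq} and Proposition~\ref{Thm:ChromGeneralLB}, so only the matching lower bound uses this analysis.

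For the second equality I would start from the eigenvalue formulation $\vartheta(K_k\square G)=\min\{\lambda_1(W):W\in\mathcal{A}(K_k\square G)\}$ together with the adjacency identity $A_{K_k\square G}=(J_k-I_k)\otimes I_n+I_k\otimes A_G$. Since $\mathrm{Aut}(K_k\square G)\supseteq S_k\times\mathrm{Aut}(G)$, the feasible set $\mathcal{A}(K_k\square G)$ is invariant under this group and $\lambda_1(\cdot)$ is convex, so averaging an optimal $W$ over the group (exactly as in the proof of Theorem~\ref{Thm:EdgeTransitiveThetaK}) shows the minimum is attained on the symmetric family $W=I_k\otimes P+(J_k-I_k)\otimes Q$. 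The entries of $W$ pinned to $1$, combined with vertex- and edge-transitivity, then force $P=J_n+xA_G$ and $Q=J_n+cI_n$ for scalars $x,c$. Lemma~\ref{Lemma:BlockFormSDP} splits the objective as $\lambda_1(W)=\max\{\lambda_1(P+(k-1)Q),\,\lambda_1(P-Q)\}$, and since $P$, $Q$, $A_G$ and $J_n$ share eigenvectors, both branches are explicit in $(x,c)$ on $\vec{1}$ and on $\vec{1}^{\perp}$.

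The computation then reduces to minimizing $\max\{a(x)+(k-1)c,\ x\lambda_n-c\}$ over $c$ and then over $x\le 0$, where $a(x)=\max\{kn+x\lambda_1,\ x\lambda_n\}$; minimizing over $c$ balances the two branches, after which the remaining one-variable problem is convex and piecewise linear with a single breakpoint at $x=kn/(\lambda_n-\lambda_1)$. I expect the minimum to equal $k\vartheta(G)$ at that breakpoint when $k\vartheta(G)\le n$, and to equal $n$ at $x=0$, $c=-n$ (where $P-Q=nI_n$ and $P+(k-1)Q=kJ_n-(k-1)nI_n$ both have largest eigenvalue $n$) when $k\vartheta(G)>n$; the sign of $\lambda_1+(k-1)\lambda_n$ again selects the regime. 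Combining the two equalities finishes the proof. The main obstacle I anticipate is twofold: first, the eigenvalue bookkeeping in both halves, in particular controlling the multiplicity of $\lambda_n$ so that the top $k$ eigenvalues at $x^{\ast}$ really sum to $k\vartheta(G)$ in the range $k\vartheta(G)\le n$, and checking that the two Schur-complement branches of Lemma~\ref{Lemma:BlockFormSDP} are simultaneously balanced at the claimed optimizer with no feasible $x\ge 0$ beating it; second, making the symmetry reduction airtight, i.e.\ convexity of the top-eigenvalue map plus invariance of $\mathcal{A}(K_k\square G)$ under $S_k\times\mathrm{Aut}(G)$ so that averaging cannot raise $\lambda_1$. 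This matching of explicit lower and upper bounds is the analogue of the argument carried out for $\GraphName$ in Theorem~\ref{thm:DeltaKGraphName}.
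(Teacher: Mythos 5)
Your treatment of $\vartheta(K_k\square G)$ is essentially the paper's proof: the same two-parameter family $I_k\otimes(J_n+xA_G)+(J_k-I_k)\otimes(J_n+yI_n)$, the same appeal to Lemma~\ref{Lemma:BlockFormSDP}, and the same piecewise-linear minimization; the paper merely organizes the case split by intervals of $x$ rather than eliminating the off-diagonal parameter first, and asserts the symmetry reduction without loss of generality where you spell out the averaging. The genuine divergence is in the half $\vartheta_k(G)=\min\{k\vartheta(G),n\}$. There the paper never touches the eigenvalue formulation: it symmetrizes an optimal solution $M$ of \ref{eqn:ThetaK_SDP2} for $k=1$, asserts that by vertex- and edge-transitivity the constant row sum gives $\lambda_1(M)=\vartheta(G)/n$, and scales, so that $kM$ remains feasible for \ref{eqn:ThetaK_SDP2} precisely when $k\vartheta(G)\le n$; this yields the lower bound $\vartheta_k(G)\ge\min\{k\vartheta(G),n\}$ directly, and the matching upper bound is \eqref{eqn:VarthetaIneq} together with Proposition~\ref{Thm:ChromGeneralLB}, as you note.

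Your route through Theorem~\ref{Thm:EdgeTransitiveThetaK} has a gap, and it is exactly the one you flag without resolving: for $f_k(x^*)$ to equal $k\vartheta(G)$ you need the multiplicity of $\lambda_n(A_G)$ to be at least $k-1$. This is not dispensable bookkeeping. If $\mathrm{mult}(\lambda_n)<k-1$ then, since $x^*<0$ and $\lambda_{n-j}>\lambda_n$ for some $j\le k-2$, you get the strict inequality $f_k(x^*)<k\vartheta(G)$, and the equality in Theorem~\ref{Thm:EdgeTransitiveThetaK} would then force $\vartheta_k(G)\le f_k(x^*)<k\vartheta(G)$ --- your computation would refute the theorem rather than prove it. So the assertion ``$\mathrm{mult}(\lambda_n)\ge k-1$ whenever $\lambda_1+(k-1)\lambda_n\ge 0$'' is, given Theorem~\ref{Thm:EdgeTransitiveThetaK}, equivalent to the claimed lower bound in the regime $k\vartheta(G)\le n$; it genuinely requires the transitivity hypotheses and is false for regular graphs in general (a typical random $d$-regular graph has $\lambda_n$ simple while $d/|\lambda_n|\approx\sqrt{d}/2$ is large, so the bound would demand a multiplicity it does not have). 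Your proposal neither proves this multiplicity bound nor cites anything that does. To close the gap you must either supply a separate argument for it (e.g.\ via complementary slackness between \eqref{eqn:ThetaK_eigenvalues} and \ref{eqn:ThetaK_SDP2}, which confines the range of the dual optimum to the top eigenspace of $J+x^*A_G$), or abandon the primal evaluation of $\min_x f_k$ and obtain the lower bound the way the paper does, by exhibiting the feasible matrix $kM$ on the dual side.
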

\begin{proof}
For notational convenience we denote $A=A_G$. Because $G$ is regular, edge-transitive and vertex-transitive, we may assume without loss of generality that $\mathcal{A}(K_k \square G)$, see \eqref{eqn:AGset}, contains only matrices of the form $X = I_k \otimes (J_n+ x A) + (J_k - I_k) \otimes (J_n+yI_n).$ In order to minimize the largest eigenvalue of $X$ we apply lemma \ref{Lemma:BlockFormSDP} and find
\begin{align*}
    \lambda_1(X) = f(x,y) &= \max \Big\{ \lambda_1(xA - yI_n), \lambda_1(kJ_n+xA+(k-1)yI_n) \Big\} \\
    &= \max \begin{cases}
    f_1 = x \lambda_1 -y, \\
    f_2 = x \lambda_n - y, \\
    f_3 = kn+x\lambda_1+(k-1)y, \\
    f_4 = x \lambda_n+(k-1)y,
    \end{cases}
\end{align*}
where $\lambda_1$ and $\lambda_n$ are the greatest and smallest eigenvalue of $A$ respectively. We have used that $\sigma(k J_n + xA + (k-1)y I_n)$ can be expressed similarly to \eqref{eqn:matrixJ_spectrumTransform}. We minimize $\lambda_1(X)$ by considering different intervals of $x$. In case $x \geq 0$, we have $f(x,y) = \max\{ f_1, \, f_3 \}$, which is minimized when $x=0$ and $f_1 = f_3$. Solving $f_1 = f_3$ for $y$, when $x=0$, yields $y = -n$. Thus, when $x \geq 0$, we find that $f(0,-n) = n$ is the minimum. Furthermore,
\begin{equation*}
    \frac{kn}{\lambda_n - \lambda_1} \leq x \leq 0 \Longrightarrow f(x,y) = \max \{ f_2, f_3 \}.
\end{equation*}
The minimum here is attained when
$$
 f_2 = f_3 ~\Longrightarrow~ y = x \Bigg( \frac{\lambda_n - \lambda_1}{k} \Bigg) - n
    ~\Longrightarrow~ f_2 = n+ \frac{1}{k}\left( (k-1)\lambda_n+\lambda_1\right )x.
$$
Depending on the sign of $(k-1)\lambda_n + \lambda_1$ we find either $f(0,-n) = n$ or $f(\frac{kn}{\lambda_n-\lambda_1},0) = k \frac{n \lambda_n}{\lambda_n - \lambda_1} = k \vartheta(G)$, by theorem \ref{lemma:LovaszThetaUB}.
Lastly, consider the case $x{ \leq} \frac{kn}{\lambda_n - \lambda_1}$. Then
\begin{align*}
    x {\leq} \frac{kn}{\lambda_n - \lambda_1} \Longrightarrow f(x,y) = \max \{ f_2, f_4 \},
\end{align*}
which is minimized when $x = kn /(\lambda_n - \lambda_1)$ and $f_2 = f_4$. {Solving $f_2 = f_4$ for $y$, when $x = kn / (\lambda_n - \lambda_1)$, yields $y=0$ and thus $f(\frac{kn}{\lambda_n - \lambda_1},0) = k\vartheta(G)$.} The minimum value of $\lambda_1(X)$, equivalently, the value $\vartheta(K_k \square G)$, thus equals $\min \{ k \vartheta(G), n \}$.

Lastly, by edge-transitivity and vertex-transitivity of $G$, matrices optimal to \ref{eqn:ThetaK_SDP2} for $G$ have a constant row sum. Thus, as \eqref{eqn:ProofOddCycle} shows, \ref{eqn:ThetaK_SDP2} is then equivalent to maximizing the largest (scaled) eigenvalue over feasible matrices. Hence, $\vartheta_k(G) = \min\{ k \vartheta(G), n\}$, as can be shown by derivations similar to those used for \eqref{eqn:ConstantRowSumResult}.
\end{proof}
A graph that is both edge-transitive and vertex-transitive is also known as a symmetric graph. Many Johnson graphs (definition \ref{Def:JohnsonGraphs}) satisfy these properties.

\citeauthor{kuryatnikova2020maximum} \cite{kuryatnikova2020maximum} compute $\vartheta_k(G)$ for several highly symmetric graphs (table 13 in the online supplement to \cite{kuryatnikova2020maximum}). They remark that for those graphs, $\mathrm{\Psi}_{\vartheta_k}(G) = \lceil \vartheta( \overline{G}) \rceil.$ We explain this result for all the graphs present in table 13 except for the graph $H(12,2, \{ i \, | \, 1 \leq i \leq 7 \} )$ (see section \ref{section:HammingGraphs} for the notation). All the other graphs evaluated by \citeauthor{kuryatnikova2020maximum} in table 13 satisfy the assumptions of theorem \ref{thm:CartesianThetaK}, hence, $\vartheta_k(G) = \vartheta(K_k \square G)$ for those graphs. Therefore,
\begin{equation}
    \label{eqn:PsiOperatorEquality}
    \mathrm{\Psi}_{\vartheta_k}(G) = \mathrm{\Psi}_{\vartheta}(G) = \lceil \vartheta( \overline{G}) \rceil.
\end{equation}

Note that the Johnson graph  $J(n,m,  m-1 )$ is regular, vertex-transitive and edge-transitive. Therefore, equation \eqref{eqn:PsiOperatorEquality} holds and $\lceil \vartheta( \overline{ J(n,m, m-1  )} ) \rceil = n-m+1.$

\section{Strongly regular graphs}
\label{section:SRG}
In the previous section we showed that certain classes of graphs allow an analytical computation of $\vartheta_k(G)$. This section expands on the considered classes with strongly regular graphs, see definition \ref{Def:StronglyRegular}.  We also derive analogous  of theorem \ref{thm:CartesianThetaK} for strongly regular graphs and the generalized $\vartheta'$-number, see theorem \ref{thm:CartesianThetaKSRG}.

Let $G$ be a strongly regular graph with parameters $(n, d, \lambda, \mu)$, and $A_G$ its adjacency matrix. Since $G$ is regular with valency $d$, we have that $d$ is an eigenvalue of $A_G$ with eigenvector $\vec{1}$.  The matrix $A_G$ has exactly two
distinct eigenvalues associated with eigenvectors orthogonal to $\vec{1}$. These two eigenvalues are known as restricted eigenvalues and are usually denoted by $r$ and $s$, where $r\geq 0$ and $s\leq -1$.
We consider here connected,  non-complete, strongly regular graphs. For those graphs we have that $s<-1$. Thus, we exclude trivial cases.

Strongly regular graphs attain Lov\'asz bound of theorem \ref{lemma:LovaszThetaUB}, see e.g.,  \citeauthor{haemers1978upper} \cite{haemers1978upper}. In particular, for a strongly regular graph $G$ we have
\[    \vartheta(G) = \frac{n\lambda_n(A_G)}{\lambda_n(A_G) - \lambda_1(A_G)}. \]
In the following theorem we derive an explicit expression for  $\vartheta_k(G)$ for strongly regular graphs.

\begin{theorem} \label{THm:StronglyRegularThetak}
For any strongly regular graph $G$ with parameters $(n,d,\lambda, \mu)$ and restricted eigenvalues $r\geq 0$ and $s< -1$, we  have
\begin{equation*}
    \vartheta_k(G) = \min\{ k \vartheta(G), n \} = \min \Bigg\{ k \frac{n \lambda_n(A_G)}{\lambda_n(A_G) - \lambda_1(A_G)}, n \Bigg \}.
\end{equation*}
\end{theorem}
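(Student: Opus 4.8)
The plan is to prove the two bounds separately. The upper bound $\vartheta_k(G) \leq \min\{k\vartheta(G), n\}$ is immediate from Proposition \ref{Thm:ChromGeneralLB}, so all the work lies in the matching lower bound. To establish it I would exhibit a single feasible solution of \ref{eqn:ThetaK_SDP2} whose objective value equals $\min\{k\vartheta(G), n\}$. Since $G$ is strongly regular, $A = A_G$ has only three distinct eigenvalues: the valency $\lambda_1(A) = d$ with eigenvector $\vec{1}$, and the restricted eigenvalues $r \geq 0$ and $s = \lambda_n(A) < -1$, whose eigenspaces lie in $\vec{1}^\perp$. Because the feasible set of \ref{eqn:ThetaK_SDP2} only constrains entries on edges, I would search for a good feasible matrix inside the three-dimensional algebra $\mathrm{span}\{I, A, J\}$, consistent with the given SRG identity $\vartheta(G) = \frac{ns}{s-d}$ in the case $k=1$.

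First I would set $Y = \frac{k}{n} I + \beta (J - I - A)$ for a scalar $\beta \geq 0$ to be chosen. By construction $Y_{ij} = 0$ on every edge of $G$ (its off-diagonal support lies on non-edges) and $\langle I, Y\rangle = k$, so the support and trace constraints of \ref{eqn:ThetaK_SDP2} hold for every $\beta$. Reading off the action of $Y$ on the three eigenspaces gives the eigenvalues
\[
\nu_0 = \tfrac{k}{n} + \beta(n - 1 - d), \quad \nu_r = \tfrac{k}{n} - \beta(1 + r), \quad \nu_s = \tfrac{k}{n} - \beta(1 + s),
\]
where $\nu_0$ is attached to $\vec{1}$, and the objective value is $\langle J, Y\rangle = \vec{1}^\top Y \vec{1} = n\,\nu_0$. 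Thus the only remaining feasibility requirement is $0 \preceq Y \preceq I$, i.e.\ $\nu_0,\nu_r,\nu_s \in [0,1]$, and the goal becomes to push $\nu_0$ as high as possible.

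Next I would take $\beta = \min\bigl\{\frac{k}{n(1+r)},\ \frac{n-k}{n(n-1-d)}\bigr\}$, which is well defined and nonnegative because $G$ is non-complete, so $n - 1 - d > 0$, and split into two cases according to the sign of $k\vartheta(G) - n$. When $k\vartheta(G) \leq n$ the first term is the minimum; then $\nu_r = 0$ and, using the standard SRG identity $\vartheta(G) = \frac{n+r-d}{1+r}$ (which coincides with $\frac{ns}{s-d}$ via $(n-d-1)\mu = d(d-1-\lambda)$), one finds $\nu_0 = \frac{k}{n}\vartheta(G) \leq 1$ and objective $n\nu_0 = k\vartheta(G)$. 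When $k\vartheta(G) \geq n$ the second term is the minimum; then $\nu_0 = 1$, the objective is $n$, and the requirement $\nu_r \geq 0$ is seen to be exactly equivalent to $k\vartheta(G) \geq n$. In both cases the inequalities $\nu_0 \geq 0$, $\nu_r \leq 1$, and $\nu_s \geq 0$ are trivial for $\beta \geq 0$ and $k \leq n$.

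The hard part will be verifying the one genuinely nontrivial constraint, $\nu_s \leq 1$. In both cases I expect this to reduce, after substituting the chosen $\beta$, to the single inequality $d - s \leq n$. This holds for every regular graph: the complement $\overline{G}$ is $(n-1-d)$-regular, and the map $\lambda \mapsto -1 - \lambda$ sends the eigenvalues of $A$ on $\vec{1}^\perp$ to those of $A_{\overline{G}}$, so $-1 - s$ is an eigenvalue of $A_{\overline{G}}$ and therefore at most its valency, giving $-1 - s \leq n - 1 - d$, i.e.\ $d - s \leq n$. Once $\nu_s \leq 1$ is confirmed, the matrix $Y$ is feasible for \ref{eqn:ThetaK_SDP2} with objective value $\min\{k\vartheta(G), n\}$; together with the upper bound from Proposition \ref{Thm:ChromGeneralLB} this yields $\vartheta_k(G) = \min\{k\vartheta(G), n\}$ and proves the theorem.
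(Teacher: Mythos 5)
Your proposal is correct and follows essentially the same route as the paper: the paper also takes $Y=\tfrac{k}{n}I+xA_{\overline{G}}$ (identical to your $\tfrac{k}{n}I+\beta(J-I-A)$), imposes $x\le\min\bigl\{\tfrac{k}{n(1+r)},\tfrac{n-k}{n(n-d-1)}\bigr\}$, and verifies $\tfrac{r+n-d}{1+r}=\vartheta(G)$ via the SRG parameter identities. The only cosmetic difference is that the paper disposes of your ``hard part'' $\nu_s\le 1$ by simply asserting the ordering $n-d-1>-(s+1)>-(1+r)$, which is exactly the complement-valency bound you prove explicitly.
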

\begin{proof}
We  prove the result by showing that the lower and upper bound on $\vartheta_k(G)$ coincide. Consider \ref{eqn:ThetaK_SDP2}, and set  $Y = \frac{k}{n} I + xA_{\overline{G}}.$ When $ 0 \preceq Y \preceq I$, $Y$ is feasible for \ref{eqn:ThetaK_SDP2}. These SDP constraints on $Y$ can be rewritten in terms of $x$. As \ref{eqn:ThetaK_SDP2} is a maximization problem we may assume w.l.g.~$x \geq 0$. Thus, for all $i \leq n$,
\begin{equation}
    \label{eqn:SRGproof1}
    \lambda_i(Y) = k/n + x \lambda_i(A_{\overline{G}}).
\end{equation}
Since three eigenvalues of $A_G$ satisfy  $d \geq r > s$, we have
\begin{equation}
    \label{eqn:SRGproof2}
    \sigma(A_{\overline{G}}) = (n-d-1, -1 -s , -1 - r ).
\end{equation}
Substituting \eqref{eqn:SRGproof2} in \eqref{eqn:SRGproof1}  and exploiting the fact that $n-d-1 > -(s+1) > -(1+r)$ we have:
\begin{align*}
    0 \preceq Y \preceq I ~\Leftrightarrow~ 0 \leq \lambda_i(Y) \leq 1 ~\Leftrightarrow~ \begin{cases}
    k/n + x(-1-r) \geq 0 \\
    k/n +x(n-d-1) \leq 1.
    \end{cases}
\end{align*}
The last two inequalities provide upper bounds on $x$, i.e.,
\begin{equation}
    \label{eqn:SRGproof3}
    x \leq \min \Bigg\{ \frac{k}{n(1+r)} , \frac{n-k}{n(n-d-1)} \Bigg\}.
\end{equation}
When $x$ satisfies \eqref{eqn:SRGproof3}, $Y$ is thus feasible for \ref{eqn:ThetaK_SDP2} and $\langle J, Y \rangle$ will provide a lower bound for $\vartheta_k(G)$. In particular, with \eqref{eqn:SRGproof3} at equality,
\begin{equation}
    \label{eqn:SRGproof4}
    \langle J, Y \rangle = k + n(n-d-1)x =\min \Bigg\{ k \Bigg( \frac{r+n-d}{1+r} \Bigg) , n \Bigg\}.
\end{equation}
Equation \eqref{eqn:SRGproof4} implies
\begin{equation}
    \label{eqn:SRGproof5}
    \vartheta_k(G) \geq \min \Bigg\{ k \Bigg( \frac{r+n-d}{1+r} \Bigg) , n \Bigg\}.
\end{equation}
By \eqref{eqn:VarthetaIneq} and proposition \ref{Thm:ChromGeneralLB}, we have $\vartheta_k(G) \leq \min\{  k\vartheta(G), n\}$.
It remains only to show that
\begin{equation}\label{eqn:kthetaRatio}
k \Bigg( \frac{r+n-d}{1+r} \Bigg) =  k\vartheta(G).
\end{equation}
The eigenvalues of $A_G$ can be written in terms of the parameters of $G$, i.e.,
\begin{equation}\label{eqn:SRGproof8}
    rs = \mu - d,  \quad  r+s = \lambda - \mu.
\end{equation}
Furthermore,  the parameters of any strongly regular graph satisfy
\begin{equation}
    \label{eqn:SRGproof9}
    (n-d-1) \mu = d(d - \lambda - 1),
\end{equation}
see e.g., Theorem 9.1.3 in \cite{brouwer2011spectra}. Let us now rewrite the term:
\begin{align}
    \label{eqn:SRGproof10}
    \frac{r+n-d}{1+r}
    = \frac{ns}{s-d}  \frac{(r+n-d)(s-d)}{ns(1+r)}
    =  \frac{ns}{s-d} \frac{ns+nrs+ [d^2-nd -(n-1)sr-d(r+s)]}{ns+nrs},
\end{align}
and evaluate the expression between the square brackets by using \eqref{eqn:SRGproof8} and \eqref{eqn:SRGproof9}, i.e.,
\begin{align*}
    d^2-nd -(n-1)rs-d(r+s) &= d\lambda + d + (n-d-1) \mu -nd -(n-1)(\mu - d) -d(\lambda - \mu)=0.
\end{align*}
Thus \eqref{eqn:SRGproof10} equals $ns/(s-d)$ and $ns/(s-d)=\vartheta(G)$, which proves the theorem.
\end{proof}
Recall that in section \ref{Section:AnalysisOfCart}  we consider symmetric graphs (graphs that are both edge-transitive and vertex-transitive). Although many graphs belong to both symmetric and strongly regular
 classes,  note that neither one is a subset of the other.
The graph $C_6$ is an example of a graph that is symmetric, but not strongly regular. The strongly regular Chang graphs (\citeauthor{chang1959uniqueness} \cite{chang1959uniqueness}) provide an example of a strongly regular graph which is not symmetric.

In section \ref{Section:AnalysisOfCart} we have proved that   $\vartheta(K_k \square G) =  \min \{ k \vartheta(G) , n \}=\vartheta_k(G)$ holds for symmetric graphs, see theorem \ref{thm:CartesianThetaK}.  We show below that a similar relation holds also for strongly regular graphs. In fact we prove a result for the generalized $\vartheta'$-number, denoted by $\vartheta'_k(G)$, that is the optimal value of the SDP relaxation  \ref{eqn:ThetaK_SDP2} strengthened  by adding  non-negativity constraints on the matrix variable. The generalized $\vartheta'$-number for $k=1$  is also known  as the Schrijver's number.

To prove our result, we first present an SDP relaxation that relates $\vartheta'(K_k \square G)$ and $\vartheta_k(G)$. \citeauthor{kuryatnikova2020maximum}  \cite{kuryatnikova2020maximum} introduce the following SDP  relaxation
\begin{eqnarray}
\theta_k^3(G) = &  {\underset{Z\in \mathbb{S}^{n}}{ \text{ Maximize}}}  &   \langle I, Y \rangle  \nonumber   \\
   & \text{subject to} &  Y_{ij} = 0   \quad  \forall (i,j) \in E(G)  \nonumber  \\
   &&  Y_{ii} \le 1  \quad \forall i\in [n]  \nonumber \\
   &&   \begin{bmatrix} k & {\rm diag}(Y)^\top \\{\rm diag}(Y) & Y \end{bmatrix} \succeq 0, \ Y\ge 0,   \nonumber
\end{eqnarray}
that provides an upper bound for $\alpha_k(G)$, the optimal value for the M$k$CS problem.
The above relaxation can be simplified when $G$ is a highly symmetric graph. In particular, if $G$ is a  strongly regular graph one can restrict optimization of the above SDP relaxation to feasible points in the coherent algebra spanned by $\{ I, A, J-I-A \}$. By applying symmetry reduction, the above SDP relaxation reduces to the following  convex optimization problem:
\begin{subequations}
\begin{eqnarray}
\theta_k^3(G)  :=& \text{ Maximize}  & n y_1 \label{obj2}  \\
 & \text{subject to} &  y_1 +(n-d-1)y_2 - \frac{n}{k}y_1^2 \geq 0 \label{M2c1}  \\
& & y_1-(r+1)y_2 \geq 0 \label{M2c2}\\
& & y_1-(s+1)y_2 \geq 0 \label{M2c3}\\
& & y_1 \leq 1 \label{M2c4} \\
& & y_1,y_2 \geq 0. \label{M2c5}
\end{eqnarray} \label{matrSDP}
\end{subequations}
For details on symmetry reduction see e.g., \cite{Parillo,kuryatnikova2020maximum} and references therein.

In \cite{kuryatnikova2020maximum} the authors conjecture that $\theta_k^3(G) \leq \vartheta_k'(G)$ for any graph $G$. Here we show that  $\theta_k^3(G) = \vartheta_k'(G)$ for any (non-trivial) strongly regular graph $G$.
\begin{lemma} \label{lem:Matrixliftingsrg}
Let $G$ be a strongly regular graph with parameters $(n,d,\lambda, \mu)$ and restricted eigenvalues $r\geq 0$ and $s< -1$.
Then
\[
\theta_k^3(G)  = \min \left \{ k \left ( \frac{r+n-d}{r+1} \right ), n  \right \} = \vartheta_k(G) = \vartheta_k'(G).
\]
\end{lemma}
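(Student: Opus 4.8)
The plan is to verify the four-term equality by reducing it to two tasks: first, recognising that the middle expression $\min\{k(\frac{r+n-d}{r+1}),n\}$ already coincides with $\vartheta_k(G)$, and second, pinning both $\theta_k^3(G)$ and $\vartheta_k'(G)$ to this common value. The first task is immediate from Theorem \ref{THm:StronglyRegularThetak}: that theorem gives $\vartheta_k(G)=\min\{k\vartheta(G),n\}$, and equation \eqref{eqn:kthetaRatio} in its proof shows $k(\frac{r+n-d}{1+r})=k\vartheta(G)$, so $\min\{k(\frac{r+n-d}{r+1}),n\}=\vartheta_k(G)$. It therefore remains to handle $\theta_k^3(G)$ and $\vartheta_k'(G)$ separately.

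For $\theta_k^3(G)$ I would solve the reduced program \eqref{matrSDP} directly. Since $s<-1$ and $y_2\ge 0$, constraint \eqref{M2c3} reads $y_1-(s+1)y_2\ge 0$ with $s+1<0$ and is automatically satisfied, hence redundant. In the connected non-complete case $n-d-1>0$, so for a fixed value of $y_1$ constraint \eqref{M2c1} forces $y_2\ge \frac{(n/k)y_1^2-y_1}{n-d-1}$, while \eqref{M2c2} caps $y_2\le \frac{y_1}{r+1}$. A compatible $y_2$ exists precisely when the lower bound does not exceed the upper bound, which after clearing denominators rearranges to $y_1\le \frac{k(r+n-d)}{n(r+1)}$; together with the cap $y_1\le 1$ from \eqref{M2c4} this bounds the objective $ny_1$. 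Taking $y_2=\frac{y_1}{r+1}$ at the extreme admissible value of $y_1$ makes \eqref{M2c1} and \eqref{M2c2} tight while keeping \eqref{M2c3} and \eqref{M2c5} satisfied, so the bound is attained and $\theta_k^3(G)=\min\{k(\frac{r+n-d}{r+1}),n\}$.

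For $\vartheta_k'(G)$ I would sandwich. On one side, since $\vartheta_k'$ is obtained from the maximization \ref{eqn:ThetaK_SDP2} by adding the entrywise constraint $Y\ge 0$, shrinking the feasible set can only lower the optimum, so $\vartheta_k'(G)\le \vartheta_k(G)$, which already equals the target. For the matching lower bound I would reuse the feasible point $Y=\frac{k}{n}I+xA_{\overline{G}}$ from the proof of Theorem \ref{THm:StronglyRegularThetak}: with $x\ge 0$ chosen so that \eqref{eqn:SRGproof3} holds at equality, it is feasible for \ref{eqn:ThetaK_SDP2} and attains objective value $\min\{k\vartheta(G),n\}$, and because $A_{\overline{G}}$ has nonnegative entries and $\frac{k}{n}>0$, this same $Y$ additionally satisfies $Y\ge 0$. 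Hence $Y$ is feasible for the $\vartheta_k'$ program, giving $\vartheta_k'(G)\ge \min\{k(\frac{r+n-d}{r+1}),n\}$ and closing the chain of equalities.

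The routine-but-delicate part is the $\theta_k^3$ computation: for each fixed $y_1$ one must confirm that the interval of admissible $y_2$, delimited below by \eqref{M2c1} and above by \eqref{M2c2}, is nonempty exactly for $y_1\le \frac{k(r+n-d)}{n(r+1)}$, and check that this crossover value exceeds $k/n$ (using $n-d-1>0$) so that no feasibility gap appears below it; the two branches of the final minimum then correspond to whether this crossover or the cap $y_1\le 1$ from \eqref{M2c4} is the binding constraint. Note that no new conjecture is invoked: in particular this argument establishes $\theta_k^3(G)=\vartheta_k'(G)$ for strongly regular graphs without appealing to the general inequality $\theta_k^3\le \vartheta_k'$ conjectured in \cite{kuryatnikova2020maximum}.
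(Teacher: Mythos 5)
Your proposal is correct and follows essentially the same route as the paper: constraint \eqref{M2c3} is discarded as redundant, the value of $\theta_k^3(G)$ is obtained from the intersection of the boundaries of \eqref{M2c1} and \eqref{M2c2} capped by \eqref{M2c4} (your feasibility-interval phrasing is the same computation), the identification with $\vartheta_k(G)$ comes from theorem \ref{THm:StronglyRegularThetak} together with \eqref{eqn:kthetaRatio}, and $\vartheta_k'(G)=\vartheta_k(G)$ is deduced exactly as in the paper from the entrywise non-negativity of the feasible matrix $\frac{k}{n}I+xA_{\overline{G}}$ with $x\geq 0$ taken from \eqref{eqn:SRGproof3}. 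No gaps; your write-up is simply a more explicit version of the paper's terse argument.
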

\begin{proof}
Note that for $s< -1$  constraint \eqref{M2c3} is trivially satisfied.
Points in which constraints \eqref{M2c1} and \eqref{M2c2} intersect are $(0,0)$ and $\left (\frac{k(r+n-d)}{n(r+1)},\frac{k(n+r-d)}{n(r+1)^2}\right )$. The  first equality follows by combining the latter point  and  constraint \eqref{M2c4}.
 The second equality follows from  $\vartheta(G)=(r+n-d)/(1+r)$, see \eqref{eqn:kthetaRatio}, and theorem \ref{THm:StronglyRegularThetak}.  The third equality follows from \eqref{eqn:SRGproof3} and the fact  that  $\frac{k}{n(1+r)}\geq 0$ and $\frac{n-k}{n(n-d-1)} \geq 0$.
\end{proof}
It is known that $\vartheta'(K_k \Box G)  \leq  \theta_k^3(G),$ see section 5.1 in \cite{kuryatnikova2020maximum}.
We show below that  equality holds when $k<n(r+1)/(r+n-d)$ and $G$ is a (non-trivial) strongly regular graph by proving an equivalence between the SDP relaxations that give  $\vartheta'(K_k \Box G)$ and  $\theta_k^3(G)$.
The SDP relaxation  for $\vartheta'(K_k \Box G)$, see also \ref{eqn:ThetaK_SDP2}, is invariant under permutations of $k$ colors when the graph under the consideration is $K_k \Box G$. This was exploited in \cite{kuryatnikova2020maximum} to derive the following symmetry reduced relaxation:
\begin{eqnarray*}
\vartheta'(K_k \Box G)  =&   {\underset{X, Z\in \mathbb{S}^{n}}{ \text{ Maximize}}} & \ \langle I, X \rangle \\
   & \text{subject to} \hspace{0.3cm}   &  X_{ij} = 0 \quad   \forall (i,j) \in E(G)  \\
  & &  Z_{ii} = 0 \quad    \forall i \in [n]  \\
  & &  X\ge 0, \ Z\ge 0, \ X-Z\succeq 0   \\
 &&  \begin{bmatrix} 1 & {\rm diag}(X)^\top  \\[1ex]  {\rm diag}(X) & X+(k-1)Z \end{bmatrix}   \succeq 0. \\
\end{eqnarray*}
The above relaxation can be further simplified when $G$ is a strongly regular graph.
One can restrict optimization to the corresponding  coherent algebra.
By applying symmetry reduction, the above SDP relaxation reduces to the following  optimization problem:
\begin{subequations}
\begin{align}
\vartheta'(K_k \Box G) :=&  \text{ Maximize} && nx_1 \label{obj3} \\
& \text{subject to} & & x_1 + (n- d-1)x_2 - (d z_1+(n-d-1)z_2) \geq 0 \label{M3c1}  \\
&& & x_1 - (r+1)x_2-(rz_1-(r+1)z_2) \geq 0 \label{M3c2}\\
&& & x_1 - (s+1)x_2 - (sz_1 -(s+1)z_2) \geq 0 \label{M3c3}\\
&& & x_1 + (n-d-1)x_2+(k-1)(d z_1 +(n-d-1)z_2) -nx_1^2\geq 0 \label{M3c4}\\
&& & x_1 -(r+1)x_2 +(k-1)(rz_1-(r+1)z_2) \geq 0 \label{M3c5}\\
&& & x_1 - (s+1)x_2 +(k-1)(sz_1-(s+1)z_2) \geq 0 \label{M3c6}\\
&& & x_1 \leq 1 \label{M3c7} \\
&& &  x_1,x_2,z_1,z_2 \geq 0 \label{M3c8}.
\end{align} \label{vectorThetaP}
\end{subequations}
Our next result relates optimization problems  \eqref{matrSDP}  and \eqref{vectorThetaP}.
\begin{proposition} \label{prop:EquivalentSrg}
Let $G$ be a strongly regular graph with parameters $(n,d,\lambda, \mu)$ and restricted eigenvalues $r\geq 0$ and $s< -1$, and  $k<\frac{n(r+1)}{r+n-d}$.
Then the optimization problems \eqref{matrSDP}  and \eqref{vectorThetaP}  are equivalent.
\end{proposition}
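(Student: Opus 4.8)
The plan is to obtain the equivalence by combining the already known bound $\vartheta'(K_k \Box G) \le \theta_k^3(G)$ (section 5.1 in \cite{kuryatnikova2020maximum}) with Lemma \ref{lem:Matrixliftingsrg}. Since Lemma \ref{lem:Matrixliftingsrg} gives $\theta_k^3(G) = k(r+n-d)/(r+1)$ precisely in the regime $k < n(r+1)/(r+n-d)$, it suffices to establish the reverse inequality $\vartheta'(K_k \Box G) \ge k(r+n-d)/(r+1)$; the two programs \eqref{matrSDP} and \eqref{vectorThetaP} then share the optimal value $\min\{k(r+n-d)/(r+1),\, n\}$ and are equivalent. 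I would prove the reverse inequality by exhibiting a single feasible point of \eqref{vectorThetaP} whose objective $n x_1$ attains this value.

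To build that point I start from the optimizer of \eqref{matrSDP}, the intersection $(y_1^*, y_2^*) = \big(\tfrac{k(r+n-d)}{n(r+1)}, \tfrac{k(r+n-d)}{n(r+1)^2}\big)$ of \eqref{M2c1} and \eqref{M2c2} found in the proof of Lemma \ref{lem:Matrixliftingsrg}, and set $x_1 = y_1^*$ and $x_2 = y_2^*$, matching the objectives exactly. The only remaining freedom is $z_1, z_2 \ge 0$. The guiding picture is spectral: on a strongly regular graph the constraints \eqref{M3c1}--\eqref{M3c3} are the nonnegativity of the eigenvalues of $X-Z$ on the three eigenspaces of $A_G$ (spanned respectively by $\vec{1}$, by the $r$-eigenvectors, and by the $s$-eigenvectors), while \eqref{M3c4}--\eqref{M3c6} are the nonnegativity of the eigenvalues of the Schur complement $X + (k-1)Z - x_1^2 J$ on the same three eigenspaces.

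I would fix $z_1, z_2$ by two equalities. First, force the $\vec{1}$-eigenvalue $d z_1 + (n-d-1) z_2$ of $Z$ to equal $\tfrac{n}{k}(y_1^*)^2$; since \eqref{M2c1} is tight at $(y_1^*, y_2^*)$, this makes both \eqref{M3c1} and \eqref{M3c4} tight. Second, impose $r z_1 - (r+1) z_2 = 0$, which is forced by \eqref{M3c2} and \eqref{M3c5} once \eqref{M2c2} is tight (the former yields $r z_1 - (r+1) z_2 \le 0$, the latter the reverse). These two linear equations determine $z_1, z_2$ uniquely, and one checks $z_1, z_2 \ge 0$ using $r \ge 0$, $s < -1$ and $n > d$. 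With this choice \eqref{M3c2}, \eqref{M3c3} and \eqref{M3c5} hold automatically, the last two because their $X$-part is nonnegative and the relevant $Z$-eigenvalue carries the right sign.

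I expect the single genuine obstacle to be constraint \eqref{M3c6}, the $s$-eigenvalue of the Schur complement. Substituting the determined $z_1, z_2$ and simplifying with the strongly regular identities $rs = \mu - d$, $r+s = \lambda - \mu$ and $(n-d-1)\mu = d(d-\lambda-1)$ from \eqref{eqn:SRGproof8}--\eqref{eqn:SRGproof9}, I anticipate \eqref{M3c6} to reduce, after cancelling the positive factor $(r-s)/(r+1)$, to the scalar inequality $z_1 \le y_1^*/(k-1)$, which is in turn equivalent to $k \le n(r+1)/(r+n-d)$. This is exactly the hypothesis (and the same threshold below which $\theta_k^3(G) < n$), so the constructed point is feasible and the reverse inequality follows; the case $k=1$ is immediate upon taking $z_1 = z_2 = 0$, whereupon \eqref{vectorThetaP} collapses to \eqref{matrSDP}. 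The whole argument should therefore hinge on this one reduction of \eqref{M3c6} to the stated bound on $k$, with everything else being routine bookkeeping on the three eigenspaces.
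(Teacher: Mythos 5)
Your proposal is correct, and the computations it hinges on check out: with $x_1=y_1^*=\tfrac{k(r+n-d)}{n(r+1)}$, $x_2=x_1/(r+1)$, and $z_1,z_2$ determined by $rz_1=(r+1)z_2$ and $dz_1+(n-d-1)z_2=\tfrac{n}{k}x_1^2$, constraint \eqref{M3c6} does collapse to $\tfrac{r-s}{r+1}\left(x_1-(k-1)z_1\right)\geq 0$, i.e.\ $z_1\leq x_1/(k-1)$, which is exactly $k\leq n(r+1)/(r+n-d)$; the remaining constraints reduce to \eqref{M2c1}, \eqref{M2c2} and sign conditions as you describe. However, your route differs from the paper's in structure. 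The paper proves equivalence by a two-way map on \emph{all} feasible points: any feasible $(x_1,x_2,z_1,z_2)$ of \eqref{vectorThetaP} yields a feasible $(y_1,y_2)=(x_1,x_2)$ of \eqref{matrSDP} (by adding pairs of constraints), and conversely any feasible $(y_1,y_2)$ of \eqref{matrSDP} is lifted via the same $z_1,z_2$ formulas you use, with \eqref{M3c6} shown to be redundant over the whole relevant region by comparing where the two parabolas meet the line $x_1=(r+1)x_2$ and the $x_1$-axis. You instead sandwich the optimal values: the cited inequality $\vartheta'(K_k\Box G)\leq\theta_k^3(G)$ gives one direction, and a single feasible witness at the optimizer of \eqref{matrSDP} gives the other. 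What your approach buys is a much lighter verification of \eqref{M3c6} (a scalar inequality at one point rather than a region-wide redundancy argument); what it gives up is that you establish only equality of optimal values rather than the pointwise correspondence of feasible solutions that the paper's proof yields --- a weaker reading of ``equivalent,'' though entirely sufficient for the downstream use in theorem \ref{thm:CartesianThetaKSRG}. You also lean on the external bound $\vartheta'(K_k\Box G)\leq\theta_k^3(G)$ where the paper's forward map makes that direction self-contained; since the paper itself states this bound just before the proposition, that reliance is legitimate.
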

\begin{proof}
Let  $(x_1,x_2,z_1,z_2)$ be feasible for  \eqref{vectorThetaP}.
 We show that $(y_1,y_2)$ where  $y_1:=x_1$ and $y_2:=x_2$ is feasible for   \eqref{matrSDP}.

From \eqref{M3c1} and \eqref{M3c4} we have
 \begin{equation*}
  \begin{cases}
    x_1 + (n-d-1)x_2 \geq (d z_1+(n-d-1)z_2)\\
    x_1 + (n-d-1)x_2 \geq nx_1^2 -(k-1)(d z_1 +(n-d-1)z_2),
  \end{cases}
\end{equation*}
from where it follows
$$ x_1 + (n-d-1)x_2 -\frac{n}{k}x_1^2  \geq \max \left \{(d z_1+(n-d-1)z_2) - \frac{n}{k}x_1^2, (k-1)(\frac{n}{k}x_1^2-(d z_1 +(n-d-1)z_2) ) \right \}. $$
To verify that the right hand side above is non-negative, note that either $d z_1 +(n-d-1)z_2 \geq \frac{n}{k}x_1^2$  or $d z_1 +(n-d-1)z_2 < \frac{n}{k}x_1^2$. Therefore $ x_1 + (n-d-1)x_2 -\frac{n}{k}x_1^2  \geq 0$ and constraint \eqref{M2c1} is satisfied.

Similarly, from \eqref{M3c2} and \eqref{M3c5} it follows that constraint \eqref{M2c2} is satisfied. Constraint \eqref{M2c3} is trivially satisfied by \eqref{M3c3} and \eqref{M3c6}.

Conversely, let $(y_1,y_2)$ be feasible for \eqref{matrSDP}. Define  $x_1:=y_1$ and $x_2:=y_2$. Let $z_1$ and $z_2$ be the solutions of the following system of equations:
\begin{align*}
rz_1 = (r+1)z_2, \quad dz_1 + (n-d-1)z_2 &= \frac{n}{k}x_1^2.
\end{align*}
Thus, $z_1=\frac{n(r+1)}{k(d+r(n-1))} x_1^2$, $z_2=z_1 \frac{r}{r+1}$. Therefore, constraint  \eqref{M3c1}  follows from \eqref{M2c1}  and the construction of $z_1$ and $z_2$. Similar arguments applied to \eqref{M2c2} can be used to verify that \eqref{M3c2} and \eqref{M3c5} are satisfied. To verify \eqref{M3c4} we rewrite the constraint as follows
\begin{align*}
x_1 + (n-d-1)x_2 + & (k-1)(d z_1 +(n-d-1)z_2)-nx_1^2 = \\
  & x_1 + (n-d-1)x_2 - \frac{n}{k}x_1^2 + (k-1)\left (d z_1 +(n-d-1)z_2 -  \frac{n}{k}x_1^2 \right ) \geq 0.
\end{align*}
To verify constraint \eqref{M3c3} we exploit the construction of $z_1$ and $z_2$ as well as $r\geq 0$ and $s<-1$ to obtain:
$-(sz_1 -(s+1)z_2) =  \frac{r-s}{r+1} z_1 \geq 0.$ It remains to show that constraint \eqref{M3c6} is redundant for $k<\frac{n(r+1)}{r+n-d}$.
Let us rewrite the constraint as follows
\begin{align} \label{con:redundantParabola}
 x_1 - (s+1)x_2 +(k-1)(sz_1-(s+1)z_2) = x_1  - (s+1)x_2 - \frac{n(k-1)(r-s)}{k(d+r(n-1)) } x_1^2 \geq 0.
\end{align}
A point of intersection of $x_1  - (s+1)x_2 -  \frac{n(k-1)(r-s)}{k(d+r(n-1)) } x_1^2 = 0$ and
$x_1=(r+1)x_2$ is $\left (\frac{k(d+r(n-1))}{n(k-1)(r+1)},\frac{k(d+r(n-1))}{n(k-1)(r+1)^2} \right )$, and a point of intersection of $x_1 +(n-d-1)x_2 - \frac{n}{k}x_1^2 =0 $ and $x_1=(r+1)x_2$ is $\left (\frac{k(r+n-d)}{n(r+1)},\frac{k(r+n-d)}{n(r+1)^2} \right)$.
Furthermore, an intersection point of
$x_1 +(n-d-1)x_2 - \frac{n}{k}x_1^2 =0 $ and  the $x_1$-axis is $(\frac{k}{n},0)$, and a point of intersection of $x_1  - (s+1)x_2 -  \frac{n(k-1)(r-s)}{k(d+r(n-1)) } x_1^2 = 0$ and the $x_1$-axis is $\left (\frac{k(d+r(n-1))}{n(k-1)(r-s)},0 \right)$.
Note that the common intersection point of both parabolas,  $x_1=(r+1)x_2$ and the $x_1$-axis is $(0,0)$.
Let us find $k$ for which
$$\frac{k(r+n-d)}{n(r+1)}< \frac{k(d+r(n-1))}{n(k-1)(r+1)}
~~\Leftrightarrow ~
k<\frac{n(r+1)}{r+n-d}
$$
and
$$\frac{k}{n}< \frac{k(d+r(n-1))}{n(k-1)(r-s)}
~~\Leftrightarrow ~
k < \frac{d+rn-s}{r-s}.
$$
By using \eqref{eqn:SRGproof10}, one can verify that  $\frac{n(r+1)}{r+n-d}< \frac{d+rn-s}{r-s}$, from where it follows that the constraint \eqref{con:redundantParabola} is redundant when $\frac{k(r+n-d)}{n(r+1)}<1$.

It follows trivially that the objective values coincide for feasible solutions of two models that are related as described.
\end{proof}
Now, from the previous discussion it follows the next result.
\begin{theorem}
\label{thm:CartesianThetaKSRG}
Let $G$ be a strongly regular graph with parameters $(n,d,\lambda, \mu)$ and restricted eigenvalues $r\geq 0$ and $s< -1$,  and  $k<\frac{n(r+1)}{r+n-d}$.
Then $\vartheta'(K_k \square G) =   \vartheta'_k(G).$
\end{theorem}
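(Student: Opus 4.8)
The plan is to assemble the conclusion from the two preceding results by a short chain of equalities, exploiting that both quantities admit symmetry-reduced reformulations whose equivalence has already been established. First I would recall that, by symmetry reduction over the coherent algebra spanned by $\{I, A, J-I-A\}$, the parameter $\theta_k^3(G)$ equals the optimal value of the reduced program \eqref{matrSDP}, while $\vartheta'(K_k \square G)$ equals the optimal value of the reduced program \eqref{vectorThetaP}. These reductions are valid precisely because $G$ is strongly regular, so that optimal solutions to the respective SDP relaxations may be averaged over the automorphism group without decreasing the objective.

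Next, I would invoke Proposition \ref{prop:EquivalentSrg}: under the hypothesis $k < \frac{n(r+1)}{r+n-d}$, the programs \eqref{matrSDP} and \eqref{vectorThetaP} are equivalent, so their optimal values coincide, giving $\vartheta'(K_k \square G) = \theta_k^3(G)$. Finally, Lemma \ref{lem:Matrixliftingsrg} supplies $\theta_k^3(G) = \vartheta'_k(G)$ for every non-trivial strongly regular graph. Combining these two identities yields $\vartheta'(K_k \square G) = \vartheta'_k(G)$, which is the claim.

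The substantive work behind this theorem is entirely contained in Proposition \ref{prop:EquivalentSrg}; the theorem itself is a formal corollary. Accordingly, the only point demanding care is the role of the hypothesis $k < \frac{n(r+1)}{r+n-d}$. This bound is exactly what guarantees the redundancy of the parabolic constraint \eqref{M3c6} in \eqref{vectorThetaP}, so that a feasible point of \eqref{matrSDP} can be lifted to a feasible point of \eqref{vectorThetaP} via the explicit choice of $z_1, z_2$ used in the proof of Proposition \ref{prop:EquivalentSrg}. The main obstacle, now already resolved upstream, is verifying that the lifted variables keep all the inequality constraints of \eqref{vectorThetaP} satisfied, in particular checking the ordering of the intersection abscissae that renders \eqref{M3c6} inactive. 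Since the objective $n x_1 = n y_1$ is preserved under the correspondence in both directions, equality of the optimal values follows as soon as feasibility is matched, and the theorem is established.
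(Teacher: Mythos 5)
Your proposal is correct and follows exactly the paper's own argument: the theorem is obtained by chaining $\vartheta'(K_k \square G) = \theta_k^3(G)$ (from the equivalence of the reduced programs \eqref{vectorThetaP} and \eqref{matrSDP} in Proposition \ref{prop:EquivalentSrg}) with $\theta_k^3(G) = \vartheta'_k(G)$ (Lemma \ref{lem:Matrixliftingsrg}). You also correctly locate where the hypothesis $k < \frac{n(r+1)}{r+n-d}$ enters, namely in rendering constraint \eqref{M3c6} redundant in the lifting step of Proposition \ref{prop:EquivalentSrg}.
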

\begin{proof}
The proof follows from lemma \ref{lem:Matrixliftingsrg} and proposition \ref{prop:EquivalentSrg}.
\end{proof}

\section{Orthogonality graphs}
\label{section:OrthogonalityGraph}
In this section we compute the generalized $\vartheta$-number for  the orthogonality graphs.

We motivate the study of orthogonality graphs by a scenario, taken from \citeauthor{galliard2003impossibility} \cite{galliard2003impossibility}. Let $n=2^r$ for some $r \geq 1$. Consider a game where two players, Alice and Bob, each receive an $n$-dimensional binary vector as input. These vectors are either equal or their Hamming distance (see definition \ref{Def:HammingDistance}) equals $n/2$, that is, they differ in exactly $2^{r-1}$ positions. Given these inputs, Alice and Bob must each return a $r$-dimensional binary vector as output. To win the game, Alice and Bob must return equal outputs if and only if their inputs were equal. Alice and Bob are not permitted to communicate once they receive their inputs. The players are, however, allowed to coordinate a strategy beforehand.
One such strategy results in the  definition of an orthogonality graph.

Vertices of the orthogonality graph $\Omega_n$ are represented by all the unique $n$-dimensional binary vectors. Vertices (equivalently vectors) are adjacent if their Hamming distance equals $n/2$ and thus, $\Omega_n = H(n, 2, \{ n / 2 \} )$. Here  $H(n, 2, \{ n / 2 \} )$ denotes the Hamming graph, see definition \ref{Def:HammingGraph}.

The strategy of Alice and Bob then comprises graph coloring for $\Omega_n$ before the game starts. After being given their input vector, Alice and Bob should respond the color of their vector, encoded as $r$-dimensional binary vector. With this $r$-dimensional vector, Alice and Bob can indicate $2^r = n$ distinct colors. Disregarding any luck in guessing, the game can always be won if and only if $\chi(\Omega_n) \leq n$.

The orthogonality graph gets its name from another description of the graph, that is, when the vectors have $\{\pm 1\}$ entries. The Hamming distance between two binary vectors of $n/2$ then corresponds to those $\{
\pm 1\}$ vectors being orthogonal to each other.

\citeauthor{godsil2008coloring} \cite{godsil2008coloring} prove that $\chi(\Omega_{2^r}) = 2^r$ for $r \in \{1, 2, 3\}$ and $\chi(\Omega_{2^r}) > 2^r$ otherwise. This means that the game can only be won for $r \leq 3$.

Clearly, for odd $n$, $\Omega_n$ is edgeless. We therefore restrict the analysis of $\Omega_n$ to the case when $n$ is a multiple of 4. When that is the case, $\Omega_n$ consists of two isomorphic components, for vectors of even and odd Hamming weights respectively.

Next to $\chi(\Omega_n)$, the independence number $\alpha(\Omega_n)$ has been studied in multiple papers. The two graph parameters are related by $|V| \leq \chi(G)\alpha(G)$, for any graph $G = (V, E)$, see \eqref{eqn:ChromSimpleLBvertices}.
\citeauthor{frankl1986} \cite{frankl1986} and
\citeauthor{galliard2001classical} \cite{galliard2001classical}
constructed  a stable set of
$\Omega_n$ of size $\underline{\alpha}(n)$ for $n \equiv 0\mod 4$. In particular,
\begin{equation}
\label{eqn:OrthStableSetConstr1}
\alpha(\Omega_n) \geq \underline{\alpha}(n) = 4 \sum_{i=0}^{n/4-1} \binom{n-1}{i}.
\end{equation}
On the other hand \citeauthor{de2007note} \cite{de2007note} used an SDP relaxation to find $\alpha(\Omega_{16}) = \underline{\alpha}(16) = 2306$.
It is also known that  $\alpha(\Omega_{24}) = \underline{\alpha}(24) = 178208$, see \cite{ihringer2019independence}. 
More recently, \citeauthor{ihringer2019independence} \cite{ihringer2019independence} have proven $\alpha(\Omega_{2^r}) = \underline{\alpha}(2^r)$ for $r \geq 2$. The {\it conjecture} by \citeauthor{godsil2008coloring} \cite{godsil2008coloring} (see also \citeauthor{franklRodl1987} \cite{franklRodl1987}) whether
\begin{align}\label{conjecture}
\alpha(\Omega_{4m}) = \underline{\alpha}(4m) \mbox{  for $m \geq 1$}
\end{align}
remains an open problem.

We proceed by computing $\vartheta_k (\Omega_n)$ when $n$ is a multiple of four. From \citeauthor{newman2004independent} \cite{newman2004independent}, the (unordered) eigenvalues of $\Omega_n$ are then given by
\begin{equation*}
    \lambda_r = \frac{2^{n/2}}{(n/2)!} \prod_{i=1}^{n/2} (2i-1-r), \quad 1 \leq r \leq n,
\end{equation*}
and $\lambda_0 = \binom{n}{n/2}$ (since $\Omega_n$ is regular with degree $\lambda_0$).
The smallest eigenvalue is obtained for $r=2$ and thus
\begin{equation*}
    \lambda_2 = \frac{1}{1-n} \binom{n}{n/2}.
\end{equation*}
Since $\Omega_n$ is isomorphic to a binary Hamming graph, $\Omega_n$ is a symmetric graph. The bound of theorem~\ref{Thm:EdgeTransitiveThetaK} thus holds with equality. \citeauthor{newman2004independent} \cite{newman2004independent} also shows that the multiplicity of $\lambda_2$ equals $n^2 - n$. This multiplicity exceeds $n$ since $n$ is a multiple of $4$. Then it is not hard to show (by a method comparable to the one used in the proof of theorem \ref{Thm:KneserGraphThetak}) that
\begin{equation*}
\vartheta_k(\Omega_n) = k \frac{2^n}{n}, \quad k \leq n.
\end{equation*}
When $k=1$, $\vartheta_k(\Omega_n)$ coincides here with the so called  ratio bound. This bound refers to \eqref{eqn:GodsilNewmanIndependentSetBoun} for regular graphs and was also computed for $\Omega_n$ in \cite{newman2004independent}.

Let $S$ be a stable set of size $\underline{\alpha}(n)$ that contains no vectors that have their Hamming weight contained in $W = \{ n/4+1$, $n/4+3$, $\ldots$, $3n/4-1 \}$. Furthermore, note that the Johnson graphs (see definition \ref{Def:JohnsonGraphs}) appear as induced subgraphs of $\Omega_n$. Let $w \in W$ and consider the subgraph of $\Omega_n$ induced by $J(n,w, w - n/4 )$. This subgraph contains no vertices in $S$ and thus
\begin{equation}
    \alpha_2(\Omega_n) \geq \underline{\alpha}(n) + 4 \underset{w \in W}{\max} \{ \alpha( J(n,w, w - n/4)) \}.
\end{equation}
We may multiply the independence number of $J(n,w,w - n/4)$ by 4 since we can take bitwise complements and find an isomorphic stable set in the isomorphic second component of $\Omega_n$.

In  section \ref{section:HammingGraphs} we prove that $\chi_k(\Omega_{4n+2}) = 2k$.

\section{New bounds on $\chi_k(G)$}
\label{sect:lower_bound_for_multicoloring}
In this section we first we derive bounds on the product and sum of $\chi_k({G})$ and $\chi_k(\overline{G})$. Then, we provide graphs for which the bounds are sharp. Lastly, we
derive  spectral lower bounds on the multichromatic number of a graph.

A famous result by \citeauthor{nordhaus1956complementary} \cite{nordhaus1956complementary} states that
\begin{align}
    &n \leq \chi(G) \chi(\overline{G}) \leq \Big( \frac{n+1}{2} \Big)^2, \label{Nordhaus1} \\
    &2\sqrt{n} \leq \chi(G) + \chi( \overline{G} ) \leq n+1. \label{Nordhaus2}
\end{align}
Various papers have been published on determining Nordhaus–Gaddum inequalities  for other graph parameters, such as the independence
and edge-independence number (see \cite{aouchiche2013survey} for a survey). We provide Nordhaus–Gaddum inequalities for $k$-multicoloring.
\begin{theorem}
\label{Thm:NordhausMulticoloring}
For any graph $G = (V,E)$, $|V| = n$, we have
\begin{align*}
    &k^2n \leq \chi_k(G) \chi_k(\overline{G}) \leq k^2 \Big( \frac{n+1}{2} \Big)^2, \\
    &2k \sqrt{n} \leq \chi_k(G)  + \chi_k(\overline{G}) \leq k(n+1).
\end{align*}
\end{theorem}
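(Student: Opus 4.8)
The plan is to reduce all four bounds to the classical Nordhaus--Gaddum inequalities \eqref{Nordhaus1}--\eqref{Nordhaus2} together with the sandwich $k\omega(G) \le \chi_k(G) \le k\chi(G)$ recorded in \eqref{eqn:MultichromBounded}. The two upper bounds are immediate: I would multiply (respectively add) the estimates $\chi_k(G) \le k\chi(G)$ and $\chi_k(\overline{G}) \le k\chi(\overline{G})$ and then invoke the classical upper bounds. Concretely, $\chi_k(G)\chi_k(\overline{G}) \le k^2 \chi(G)\chi(\overline{G}) \le k^2\left(\frac{n+1}{2}\right)^2$ by \eqref{Nordhaus1}, and $\chi_k(G) + \chi_k(\overline{G}) \le k\bigl(\chi(G)+\chi(\overline{G})\bigr) \le k(n+1)$ by \eqref{Nordhaus2}.

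The lower bound on the product is the only step requiring a genuinely new inequality, and it is the main obstacle. The naive estimate $\chi_k(G)\chi_k(\overline{G}) \ge k^2\,\omega(G)\,\omega(\overline{G}) = k^2\,\omega(G)\,\alpha(G)$ is useless, since $\omega(G)\alpha(G)$ can lie far below $n$ (e.g.\ for random graphs). Instead I would first establish the ratio-type inequality $\alpha(G)\,\chi_k(G) \ge kn$. This follows from a direct counting argument: in an optimal $k$-multicoloring each of the $\chi_k(G)$ colour classes is an independent set and hence covers at most $\alpha(G)$ vertices, while the total number of (vertex, colour) incidences is exactly $kn$; alternatively it follows from $\alpha(G\circ K_k) = \alpha(G)\alpha(K_k) = \alpha(G)$, see \eqref{eqn:IndepNumberLexiProd}, combined with $|V(G\circ K_k)| = kn$ and the identity $\chi_k(G)=\chi(G\circ K_k)$. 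Pairing $\chi_k(G) \ge kn/\alpha(G)$ with $\chi_k(\overline{G}) \ge k\,\omega(\overline{G}) = k\,\alpha(G)$, where the latter uses \eqref{eqn:ThetaKLouwerBound} applied to $\overline{G}$ together with $\omega(\overline{G})=\alpha(G)$, then yields $\chi_k(G)\chi_k(\overline{G}) \ge \frac{kn}{\alpha(G)}\cdot k\,\alpha(G) = k^2 n$.

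Finally, the lower bound on the sum follows from the lower bound on the product by the arithmetic--geometric mean inequality: $\chi_k(G)+\chi_k(\overline{G}) \ge 2\sqrt{\chi_k(G)\chi_k(\overline{G})} \ge 2\sqrt{k^2 n} = 2k\sqrt{n}$. In summary, three of the four inequalities are pure bookkeeping with the established sandwich bounds, and the essential insight is that the product lower bound must be routed through the ratio inequality $\chi_k(G) \ge kn/\alpha(G)$ rather than through the clique bound.
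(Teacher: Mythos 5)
Your proposal is correct and follows essentially the same route as the paper: the paper also gets the product lower bound from the counting identity $\sum_i |n_i| = kn$ over the colour classes of an optimal $k$-multicoloring combined with $\chi_k(\overline{G}) \ge k\,\omega(\overline{G})$, derives the sum lower bound by AM--GM, and obtains both upper bounds by multiplying/adding $\chi_k \le k\chi$ into the classical Nordhaus--Gaddum inequalities. The only cosmetic difference is that the paper routes the product bound through the size of the largest colour class $\max_i |n_i|$ rather than through $\alpha(G)$, which is an equivalent reformulation of the same argument.
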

\begin{proof}
We follow the original proof as given by \citeauthor{nordhaus1956complementary} \cite{nordhaus1956complementary}, extended to the $k$-multicoloring case. Consider an optimal $k$-multicoloring of $G$, using $\chi_k(G)$ colors. Then for $i = 1, 2, \ldots, \chi_k(G)$, define $n_i$ as the set of vertices that are colored with color $i$. We have $\sum_{i=1}^{\chi_k(G)} |n_i| = nk.$ Furthermore,
\begin{equation}
    \label{eqn:Nordhaus1}
    \max |n_i| \geq \frac{nk}{\chi_k(G)}.
\end{equation}
Consider the largest set $n_i$. Since the vertices in this set share a color, they form a stable set in $G$. Thus they form a clique in $\overline{G}$. Accordingly,
\begin{equation}
    \label{eqn:Nordhaus2}
    \chi_k(\overline{G}) \geq k \omega(\overline{G}) \geq k \max |n_i|.
\end{equation}
Combining \eqref{eqn:Nordhaus1} and \eqref{eqn:Nordhaus2} proves the lower bound on the product of  $\chi_k({G}) $ and $\chi_k(\overline{G})$.

The  lower bound on the sum of  $\chi_k({G}) $ and $\chi_k(\overline{G})$ can be proven by algebraic manipulation:
\begin{align*}
    &(\chi_k(G) - \chi_k(\overline{G}))^2 \geq 0 ~\Longrightarrow~ \chi_k(G)^2 + \chi_k(\overline{G})^2 + 2 \chi_k(G) \chi_k(\overline{G}) \geq 4 \chi_k(G) \chi_k(\overline{G}) \nonumber \\
    & \Longrightarrow \chi_k(G) + \chi_k(\overline{G}) \geq 2 \sqrt{\chi_k(G) \chi_k(\overline{G})} \geq 2k \sqrt{n} \nonumber.
\end{align*}
The two upper bounds can be proven by combining \eqref{eqn:MultichromBounded}, \eqref{Nordhaus1}  and  \eqref{Nordhaus2} as follows:
\begin{align*}
    &\chi_k(G) \chi_k(\overline{G}) \leq k^2 \chi(G) \chi(\overline{G}) \leq k^2 \Big( \frac{n+1}{2} \Big) ^2, \\
    & \chi_k(G)  + \chi_k(\overline{G}) \leq k (\chi(G) + \chi(\overline{G})) \leq k(n+1).
\end{align*}
\end{proof}
The second upper bound in theorem \ref{Thm:NordhausMulticoloring} can also be found in \cite{brigham1982generalized} (in a slightly generalized form).
We present below graphs for which  the bounds in theorem \ref{Thm:NordhausMulticoloring} are attained. For that purpose we define the graph sum of two graphs. The graph sum of graphs $G_1$ and $G_2$ is the graph, denoted by $G_1 + G_2$, whose vertices  and edges are defined as follows:
\begin{align*}
     V(G_1 + G_2) := V(G_1)  \cup V(G_2), \ E(G_1 + G_2) := E(G_1) \cup E(G_2).
\end{align*}
\citeauthor{nordhaus1956complementary} \cite{nordhaus1956complementary} show that the upper bounds in their theorem are attained by graph $G = K_p + \overline{K}_{p-1}$. Graph $G$ has $n = 2p-1$ vertices. It is clear that $\chi(G) = \chi(\overline{G}) = p = \frac{n+1}{2}$. Thus $G$ attains both upper bounds simultaneously. As both $G$ and $\overline{G}$ are weakly perfect graphs, we can apply \eqref{eqn:WeaklyPerfectMulticoloring} to find $\chi_k(G) = \chi_k(\overline{G}) = kp = k \frac{n+1}{2}$. This implies that graph $G$ also attains the upper bounds in theorem \ref{Thm:NordhausMulticoloring}.
\citeauthor{nordhaus1956complementary} \cite{nordhaus1956complementary} also provide an example of a graph which attains the lower bounds in their theorem. This example extends to the multichromatic variant as well. Let $m_1 = m_2 = \ldots = m_p = p$ and consider the complete multipartite graph $G = K_{m_1, \ldots, m_p}$. Then $\chi_k(G) = \chi_k(\overline{G}) = kp = k \sqrt{n}$. Thus, this graph $G$ attains the lower bounds in theorem \ref{Thm:NordhausMulticoloring}.
In fact, for any graph $G$ such that $\chi(G)\chi(\overline{G}) = |V(G)|$, we  have $ k^2 |V(G)| \leq \chi_k(G)\chi_k(\overline{G})$ by theorem \ref{Thm:NordhausMulticoloring}, and $\chi_k(G)\chi_k(\overline{G}) \leq k^2 \chi(G)\chi(\overline{G}) = k^2|V(G)|$ by \eqref{eqn:MultichromBounded}. Since the upper and lower bound coincide, we have $\chi_k(G)\chi_k(\overline{G}) = k^2 |V(G)|$. The set of vertex-transitive graphs provides a number of examples for which this bound is attained, such as the Johnson graph $J(n,2,1)$ when $n$ is even.

The chromatic number of a graph is bounded by the spectrum of matrices related to its adjacency matrix.
This well known result is given below.
\begin{theorem}[\citeauthor{Hoffman70} \cite{Hoffman70}] \label{lemma:HoffmanChromLB}
If $G$ has at least one edge, then
$\chi(G) \geq 1 - \frac{\lambda_1(A_G)}{ \lambda_n(A_G)}.$
\end{theorem}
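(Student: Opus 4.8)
The plan is to prove the bound directly from the spectral behaviour of an optimal proper colouring, using the Rayleigh quotient rather than quotient-matrix interlacing. Write $\chi := \chi(G)$ and fix an optimal proper colouring with colour classes $V_1, \ldots, V_\chi$, each of which is an independent set. Let $u$ be a unit eigenvector of $A_G$ associated with $\lambda_1(A_G)$, and decompose $u = \sum_{j=1}^\chi u^{(j)}$, where $u^{(j)}$ agrees with $u$ on $V_j$ and vanishes elsewhere. Since the $V_j$ partition $V$, the vectors $u^{(j)}$ have pairwise disjoint support, so $\langle u^{(i)}, u^{(j)} \rangle = 0$ for $i \neq j$ and $\sum_j \|u^{(j)}\|^2 = \|u\|^2 = 1$.

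First I would record that every intra-class form vanishes: because $V_j$ is independent, the principal submatrix of $A_G$ indexed by $V_j$ is zero, hence $(u^{(j)})^\top A_G u^{(j)} = 0$. Expanding $\lambda_1(A_G) = u^\top A_G u$ over the decomposition therefore gives $\lambda_1(A_G) = \sum_{i \neq j} (u^{(i)})^\top A_G u^{(j)}$. The crucial step is then to consider the signed quadratic form $\sum_{i<j} (u^{(i)} - u^{(j)})^\top A_G (u^{(i)} - u^{(j)})$. Using symmetry of $A_G$ together with the vanishing diagonal blocks, this sum collapses to $-\sum_{i\neq j}(u^{(i)})^\top A_G u^{(j)} = -\lambda_1(A_G)$. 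On the other hand, bounding each summand below by the Rayleigh inequality $w^\top A_G w \geq \lambda_n(A_G)\|w\|^2$ and using disjoint supports to write $\|u^{(i)} - u^{(j)}\|^2 = \|u^{(i)}\|^2 + \|u^{(j)}\|^2$, the whole sum is at least $\lambda_n(A_G)\sum_{i<j}(\|u^{(i)}\|^2 + \|u^{(j)}\|^2) = (\chi-1)\lambda_n(A_G)$, since each $\|u^{(i)}\|^2$ is counted $\chi-1$ times.

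Combining the two evaluations yields $-\lambda_1(A_G) \geq (\chi-1)\lambda_n(A_G)$, that is, $\lambda_1(A_G) \leq (\chi-1)\bigl(-\lambda_n(A_G)\bigr)$. Because $G$ has at least one edge, $A_G \neq 0$ while $\Tr(A_G) = 0$, so $\lambda_n(A_G) < 0$; dividing by $-\lambda_n(A_G) > 0$ and rearranging gives $\chi(G) \geq 1 - \lambda_1(A_G)/\lambda_n(A_G)$, as required. I expect the main subtlety to be the choice of the \emph{difference} (rather than the sum) inside the quadratic form: the analogous form $\sum_{i<j}(u^{(i)}+u^{(j)})^\top A_G(u^{(i)}+u^{(j)})$ would instead produce the lower bound $\lambda_1(A_G) \geq (\chi-1)\lambda_n(A_G)$, which is vacuous. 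Only the signed combination converts the independence of the colour classes into the desired upper bound on $\lambda_1(A_G)$. The one remaining point requiring care is checking $\lambda_n(A_G) < 0$ before dividing, which is precisely where the hypothesis that $G$ has an edge is used.
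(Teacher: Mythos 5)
Your proof is correct. Note that the paper itself offers no proof of this statement: it is quoted as a known result and attributed to Hoffman, so there is no internal argument to compare against. Your argument is a clean, self-contained derivation of Hoffman's bound that avoids the quotient-matrix/interlacing machinery of the classical treatment: you decompose the top unit eigenvector $u$ over the colour classes, observe that the diagonal blocks of $A_G$ vanish on independent sets, and evaluate the signed form $\sum_{i<j}(u^{(i)}-u^{(j)})^\top A_G(u^{(i)}-u^{(j)})$ in two ways, once exactly (collapsing to $-\lambda_1(A_G)$ by symmetry of $A_G$) and once via the Rayleigh bound $w^\top A_G w \geq \lambda_n(A_G)\lVert w\rVert^2$ together with the disjoint supports, which gives $(\chi-1)\lambda_n(A_G)$ since each $\lVert u^{(i)}\rVert^2$ is counted $\chi-1$ times. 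All steps check out, including the bookkeeping when some $u^{(j)}=0$, and you correctly isolate where the hypothesis of at least one edge enters (it forces $\chi\geq 2$ and, via $\Tr(A_G)=0$ with $A_G\neq 0$, gives $\lambda_n(A_G)<0$ so the division is legitimate). Your remark about why the difference rather than the sum must be used is also apt; the unsigned version indeed yields only a vacuous inequality.
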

Since each color class has size at most $\alpha(G)$, we have that
\begin{equation}
    \label{eqn:ChromSimpleLBvertices}
    \chi(G) \geq \frac{n}{\alpha(G)},
\end{equation}
where $n$ is the number of vertices in $G$.
Therefore one can use upper bounds for  $\alpha(G)$ to derive lower bounds for $ \chi(G)$.
From \eqref{eqn:IndepNumberLexiProd} it follows that $\alpha(G \circ K_k) = \alpha(G)$. Thus we can establish the multicoloring variant of \eqref{eqn:ChromSimpleLBvertices}:
\begin{equation}
    \label{eqn:SimpleMulticoloringVerticesLB}
    \chi_k(G) = \chi(G \circ K_k) \geq \frac{| V(G \circ K_k) | }{\alpha(G \circ K_k)} = k \frac{n}{\alpha(G)}.
\end{equation}
Note that the above result also follows from \eqref{eqn:Nordhaus1}.
The bound \eqref{eqn:SimpleMulticoloringVerticesLB} is also given in \cite{campelo2013optimal}, where the authors show that the lower bound is tight for webs and antiwebs.
Note that for a graph $G$ such that $\alpha_k(G) = k\vartheta(G)$ we have that
$\alpha_k(G) = k\alpha(G)$, see lemma 5 in \cite{kuryatnikova2020maximum}, and thus $\chi_k(G)  \geq  \frac{k^2 n}{\alpha_k(G)}.$
The above inequality is satisfied for example for  the Johnson graph $J(n,2,1)$ when $n$ is even, and for  $J(n,3,2)$ when $v \equiv$ 1 or 3 mod 6.

Let us now present  known upper bounds for the independence number of a graph.
\begin{theorem}[\citeauthor{Hoffman70} \cite{Hoffman70}]
\label{lemma:HoffmanStableSetUB}
For any $d$-regular graph $G$ of order $n$, we have $\alpha(G) \leq n \frac{\lambda_n(A_G)}{\lambda_n(A_G) -d}.$
\end{theorem}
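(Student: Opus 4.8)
The plan is to use Hoffman's classical ratio-bound argument via the characteristic vector of a maximum independent set. Let $A := A_G$, write $\alpha := \alpha(G)$, and recall that since $G$ is $d$-regular, $\vec{1}$ is an eigenvector of $A$ with eigenvalue $d = \lambda_1(A_G)$. Let $S$ be a maximum independent set, $|S| = \alpha$, and let $\vec{x} \in \{0,1\}^n$ be its characteristic vector. The two facts I would start from are $\vec{x}^\top \vec{x} = \alpha$ and, crucially, $\vec{x}^\top A \vec{x} = 0$, the latter holding because $S$ spans no edges of $G$.

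Next I would split $\vec{x}$ into its component along $\vec{1}$ and the orthogonal complement: write $\vec{x} = \tfrac{\alpha}{n}\vec{1} + \vec{y}$ with $\vec{y} \perp \vec{1}$, where $\tfrac{\alpha}{n}\vec{1}$ is the orthogonal projection of $\vec{x}$ onto $\vec{1}$. Comparing squared norms gives $\|\vec{y}\|^2 = \alpha - \alpha^2/n$. Expanding the quadratic form and using $A\vec{1} = d\vec{1}$ together with $\vec{1}^\top \vec{y} = 0$ annihilates the cross terms, leaving $\vec{x}^\top A \vec{x} = \tfrac{d\,\alpha^2}{n} + \vec{y}^\top A \vec{y}$. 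Since $\vec{y}$ lies in $\vec{1}^\perp$ and $\lambda_n(A_G)$ is the smallest eigenvalue of the symmetric matrix $A$ on all of $\mathbb{R}^n$, the spectral decomposition of $A$ yields the lower estimate $\vec{y}^\top A \vec{y} \geq \lambda_n(A_G)\,\|\vec{y}\|^2$.

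Substituting $\vec{x}^\top A \vec{x} = 0$ and the value of $\|\vec{y}\|^2$ then gives
\[
0 \;\geq\; \frac{d\,\alpha^2}{n} + \lambda_n(A_G)\Big(\alpha - \frac{\alpha^2}{n}\Big).
\]
Dividing through by $\alpha > 0$ and rearranging isolates $\tfrac{\alpha}{n}\big(d - \lambda_n(A_G)\big) \leq -\lambda_n(A_G)$, which rewrites as $\alpha \leq n\,\tfrac{\lambda_n(A_G)}{\lambda_n(A_G) - d}$, the claimed bound. There is no genuine obstacle; the only point requiring care is the sign in this last step, since one divides by $d - \lambda_n(A_G)$, and this is legitimate precisely because a graph with an edge has $\lambda_n(A_G) < 0 \leq d$, so $d - \lambda_n(A_G) > 0$ and the inequality direction is preserved. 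I note in passing that a one-line alternative is available entirely from results already in the excerpt: the sandwich inequality $\alpha(G) \leq \vartheta(G)$ combined with Theorem \ref{lemma:LovaszThetaUB} (using $\lambda_1(A_G)=d$) immediately yields $\alpha(G) \leq \vartheta(G) \leq n\,\tfrac{\lambda_n(A_G)}{\lambda_n(A_G)-d}$.
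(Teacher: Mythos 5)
Your proof is correct: this is the standard ratio-bound (interlacing-free) argument — decompose the characteristic vector of a maximum independent set along $\vec{1}$ and its orthogonal complement, use $\vec{x}^\top A\vec{x}=0$ and the Rayleigh-quotient lower bound $\vec{y}^\top A\vec{y}\geq \lambda_n(A_G)\|\vec{y}\|^2$, and solve for $\alpha$. The sign discussion at the end is the right point to be careful about, and your observation that $\lambda_n(A_G)<0\leq d$ whenever $G$ has an edge settles it. Note, however, that the paper offers no proof to compare against: the statement is quoted as a known theorem of Hoffman (it is used only as an ingredient for the lower bounds on $\chi_k$ in lemmas \ref{thm:MulticoloringSpectrumLB2} and \ref{thm:MulticoloringSpectrumLB}), so any complete proof such as yours goes beyond what the paper supplies. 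Your closing remark is also apt: within the paper's own toolkit the bound follows in one line from $\alpha(G)\leq\vartheta(G)$ together with theorem \ref{lemma:LovaszThetaUB}, which is exactly the connection the paper itself alludes to when it calls \eqref{eqn:GodsilNewmanIndependentSetBoun} "the ratio bound" for regular graphs.
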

The result of theorem \ref{lemma:HoffmanStableSetUB} applies only to regular graphs with no loops.
\citeauthor{Haemers80}  generalizes the Hoffman bound as follows.
\begin{theorem}[\citeauthor{Haemers80} \cite{Haemers80}] \label{thm:Haemenrs80}
Let $G$ have minimum vertex degree $\delta$. Then
\begin{equation}  \label{eqn:HaemersIndependentSetBoun}
\alpha(G) \leq n \frac{ \lambda_1(A_G) \lambda_n(A_G) }{ \lambda_1(A_G) \lambda_n(A_G)-\delta^2}.
\end{equation}
\end{theorem}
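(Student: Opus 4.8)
The plan is to prove the bound by combining eigenvalue interlacing for a two-block quotient matrix with the minimum-degree hypothesis; this is the natural refinement of the argument behind the Hoffman ratio bound (theorem \ref{lemma:HoffmanStableSetUB}), where $\delta$ takes over the role played there by regularity. First I would let $S \subseteq V$ be a maximum independent set, so $|S| = \alpha(G) =: \alpha$, and partition $V = S \cup (V \setminus S)$. Writing $e$ for the number of edges between $S$ and $V \setminus S$, the (symmetric) quotient matrix of $A_G$ for this partition is
\begin{equation*}
B = \begin{bmatrix} 0 & e/\sqrt{\alpha(n-\alpha)} \\ e/\sqrt{\alpha(n-\alpha)} & b_{22} \end{bmatrix},
\end{equation*}
whose top-left entry vanishes precisely because $S$ is independent. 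The value of $b_{22}$ is never needed.

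Next I would invoke the interlacing theorem for quotient matrices: the eigenvalues $\mu_1 \geq \mu_2$ of $B$ interlace $\sigma(A_G)$, so $\mu_1 \leq \lambda_1(A_G)$ and $\mu_2 \geq \lambda_n(A_G)$. Assuming $G$ has an edge (otherwise the statement is trivial), $\lambda_n(A_G) < 0 < \lambda_1(A_G)$, and since $\det B = -e^2/(\alpha(n-\alpha)) < 0$ we also have $\mu_2 < 0 < \mu_1$. A short sign-chase then gives $\mu_1\mu_2 \geq \lambda_1(A_G)\lambda_n(A_G)$: multiplying $\mu_1 \leq \lambda_1(A_G)$ by the negative number $\mu_2$ reverses the inequality, and then $\mu_2 \geq \lambda_n(A_G)$ together with $\lambda_1(A_G) > 0$ closes the chain.

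The minimum-degree hypothesis enters at this point. Each vertex of $S$ sends all of its $\geq \delta$ incident edges into $V \setminus S$, so $e \geq \delta\alpha$, whence
\begin{equation*}
-\mu_1\mu_2 = \frac{e^2}{\alpha(n-\alpha)} \geq \frac{\delta^2\alpha}{n-\alpha}.
\end{equation*}
Combined with $\mu_1\mu_2 \geq \lambda_1(A_G)\lambda_n(A_G)$, this yields $\lambda_1(A_G)\lambda_n(A_G) \leq -\delta^2\alpha/(n-\alpha)$. Finally I would solve for $\alpha$: setting $P = -\lambda_1(A_G)\lambda_n(A_G) > 0$, the inequality reads $P(n-\alpha) \geq \delta^2\alpha$, i.e.\ $\alpha \leq nP/(P+\delta^2)$, which is exactly \eqref{eqn:HaemersIndependentSetBoun} after substituting $P$ back.

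The main obstacle is the careful justification of the interlacing step for a quotient matrix that is a priori non-symmetric: one must pass to the symmetric form displayed above so that its eigenvalues are guaranteed real and genuinely interlace $\sigma(A_G)$, and then track the signs correctly to turn the interlacing inequalities on $\mu_1,\mu_2$ into the lower bound on the \emph{product} $\mu_1\mu_2$. Everything else is routine bookkeeping with $e$ and $\delta$.
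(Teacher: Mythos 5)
Your proof is correct. Note that the paper itself gives no proof of this statement -- it is imported verbatim from Haemers' thesis \cite{Haemers80} -- so there is nothing internal to compare against; your argument (symmetrized two-part quotient matrix for the partition $V = S \cup (V\setminus S)$, interlacing $\mu_1 \le \lambda_1$, $\mu_2 \ge \lambda_n$, the sign-chase to $\mu_1\mu_2 \ge \lambda_1\lambda_n$, and $e \ge \delta\alpha$) is essentially Haemers' original interlacing proof, and all steps check out, including the degenerate cases you flag (for $\delta = 0$ the bound reads $\alpha(G) \le n$ and is vacuous, and $\delta \ge 1$ guarantees $e > 0$ so that $\det B < 0$ and the partition is proper).
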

If $G$ is regular, then the result of theorem \ref{thm:Haemenrs80} reduces to Hoffman's bound.
Another extension of the bound of Hoffman  is given by \citeauthor{godsil2008eigenvalue}.
\begin{theorem}[\citeauthor{godsil2008eigenvalue} \cite{godsil2008eigenvalue}]
Let $G$ be a loopless graph and  $L_G$ its Laplacian matrix.  Then
\begin{equation}
    \label{eqn:GodsilNewmanIndependentSetBoun}
    \alpha(G) \leq n \frac{ \lambda_1(L_G) - \overline{d}_G}{ \lambda_1(L_G) },
\end{equation}
where $\overline{d}_G$ denotes the average degree of the  vertices of $G$.
\end{theorem}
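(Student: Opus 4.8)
The plan is to mirror the proof of Hoffman's ratio bound (Theorem~\ref{lemma:HoffmanStableSetUB}), replacing the adjacency matrix by the Laplacian $L_G$. The two structural facts that drive the argument are that $L_G \succeq 0$ and that $\vec{1}$ lies in its kernel, since $L_G \vec{1} = \vec{0}$; consequently $\lambda_n(L_G) = 0$ and, by the variational characterisation of eigenvalues, $x^\top L_G x \le \lambda_1(L_G)\, x^\top x$ for every $x \in \mathbb{R}^n$. I would feed into this inequality the characteristic vector of a maximum independent set, suitably centred so that the norm on the right-hand side becomes as small as possible.

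Concretely, let $S \subseteq V$ be a maximum independent set, $|S| = \alpha(G)$, and let $\vec{1}_S$ denote its characteristic vector. First I would record the identity $\vec{1}_S^\top L_G \vec{1}_S = \sum_{i \in S} d_i$, which holds because $L_G = \mathrm{Diag}(d_1,\dots,d_n) - A_G$ and $S$ spans no edges, so the $A_G$-contribution vanishes. Next, set $x := \vec{1}_S - \tfrac{\alpha(G)}{n}\vec{1}$. Since $L_G \vec{1} = \vec{0}$, this centring does not change the quadratic form, i.e. $x^\top L_G x = \vec{1}_S^\top L_G \vec{1}_S = \sum_{i\in S} d_i$, while the norm shrinks to $\|x\|^2 = \alpha(G)\,(n-\alpha(G))/n$. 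Applying $x^\top L_G x \le \lambda_1(L_G)\|x\|^2$ then yields
\begin{equation*}
    \sum_{i \in S} d_i \;\le\; \lambda_1(L_G)\,\frac{\alpha(G)\,(n-\alpha(G))}{n}.
\end{equation*}

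The final step is to turn the left-hand side into the average degree and solve for $\alpha(G)$, and this is exactly where the main obstacle lies. The quadratic form only \emph{sees} the degrees of the vertices inside $S$, so the inequality above most naturally bounds $\tfrac{1}{\alpha(G)}\sum_{i\in S} d_i$, the mean degree within $S$. Bounding this below by $\min_{i} d_i$ and rearranging gives $\alpha(G) \le n\,(\lambda_1(L_G) - \min_i d_i)/\lambda_1(L_G)$; when $G$ is regular --- the setting in which \eqref{eqn:GodsilNewmanIndependentSetBoun} is invoked in this paper, e.g.\ for $\Omega_n$ --- we have $\min_i d_i = \overline{d}_G$ and the stated bound follows, reducing moreover to Hoffman's bound of Theorem~\ref{lemma:HoffmanStableSetUB}. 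For the general (non-regular) case one has to argue that the same conclusion survives with the average degree in place of the minimum, and I expect this to be the delicate point. An alternative route that keeps the bookkeeping transparent is to replace the Rayleigh-quotient step by eigenvalue interlacing for the quotient matrix of the partition $\{S, V\setminus S\}$, in the spirit of Lemma~\ref{Lemma:BlockFormSDP}, whose nonzero eigenvalue equals $\tfrac{n}{\alpha(G)(n-\alpha(G))}\sum_{i\in S}d_i$ and is at most $\lambda_1(L_G)$.
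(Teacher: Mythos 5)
The paper does not actually prove this statement---it is quoted as an external result of Godsil and Newman---so your proposal has to be judged on its own terms. Your Rayleigh-quotient computation is correct and is the standard argument: for an independent set $S$ one has $\vec{1}_S^\top L_G\vec{1}_S=\sum_{i\in S}d_i$, centring by $\frac{|S|}{n}\vec{1}$ leaves the quadratic form unchanged while shrinking the squared norm to $|S|(n-|S|)/n$, and $x^\top L_Gx\le\lambda_1(L_G)\,x^\top x$ then gives
\begin{equation*}
|S|\;\le\;n\,\frac{\lambda_1(L_G)-\bar{d}_S}{\lambda_1(L_G)},\qquad \bar{d}_S:=\frac{1}{|S|}\sum_{i\in S}d_i.
\end{equation*}
(The interlacing route you sketch yields exactly the same inequality, since the quotient matrix of $L_G$ for the partition $\{S,V\setminus S\}$ has eigenvalues $0$ and $\frac{n}{|S|(n-|S|)}\sum_{i\in S}d_i$.)

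The ``delicate point'' you flag---upgrading from $\bar{d}_S$ (or $\min_i d_i$) to the global average degree $\overline{d}_G$---is not merely delicate: it cannot be done, because the inequality \eqref{eqn:GodsilNewmanIndependentSetBoun} as transcribed is false for general loopless graphs. Take $G=K_{1,3}$: then $\alpha(G)=3$, $\lambda_1(L_G)=4$ and $\overline{d}_G=\tfrac{3}{2}$, so the right-hand side equals $4\cdot(4-\tfrac{3}{2})/4=\tfrac{5}{2}<3$. The correct general form of the Godsil--Newman bound is precisely the one your argument proves, with $\bar{d}_S$ in place of $\overline{d}_G$; the version with $\overline{d}_G$ holds for regular graphs (where it reduces to Hoffman's bound, theorem \ref{lemma:HoffmanStableSetUB}) and, more generally, whenever a maximum independent set satisfies $\bar{d}_S\ge\overline{d}_G$. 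Since every invocation of \eqref{eqn:GodsilNewmanIndependentSetBoun} in this paper is for a regular graph, your proof covers all of its uses; you should not attempt to close the remaining gap in full generality, as no such argument exists.
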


Now we are ready to present our results.
\begin{lemma}\label{thm:MulticoloringSpectrumLB2}
Let $G$ have minimum vertex degree $\delta$. Then
\begin{equation*}
    \chi_k(G) \geq k \frac{ \lambda_1(A_G) \lambda_n(A_G)-\delta^2 }{\lambda_1(A_G) \lambda_n(A_G)}.
\end{equation*}
\end{lemma}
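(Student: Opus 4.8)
The plan is to obtain this bound as a direct combination of the elementary multicoloring lower bound \eqref{eqn:SimpleMulticoloringVerticesLB} with the Haemers upper bound on the independence number (Theorem \ref{thm:Haemenrs80}). The point is that $\chi_k(G)$ is already controlled from below by $kn/\alpha(G)$, so any upper bound on $\alpha(G)$ immediately yields a lower bound on $\chi_k(G)$; the specific spectral expression in the statement is exactly what Haemers' bound produces.

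First I would recall that \eqref{eqn:SimpleMulticoloringVerticesLB}, which follows from $\chi_k(G) = \chi(G \circ K_k)$ together with $\alpha(G \circ K_k) = \alpha(G)$ (see \eqref{eqn:IndepNumberLexiProd}), gives for any graph $G$ on $n$ vertices the inequality $\chi_k(G) \geq kn/\alpha(G)$. This reduces the problem entirely to bounding $\alpha(G)$ from above. Next I would invoke Theorem \ref{thm:Haemenrs80}, which states that a graph with minimum degree $\delta$ satisfies
\begin{equation*}
    \alpha(G) \leq n \frac{\lambda_1(A_G)\lambda_n(A_G)}{\lambda_1(A_G)\lambda_n(A_G) - \delta^2}.
\end{equation*}

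The remaining step is to invert this estimate and substitute it into \eqref{eqn:SimpleMulticoloringVerticesLB}. Here the only thing requiring care is the sign bookkeeping: since $G$ has an edge we have $\lambda_1(A_G) > 0 > \lambda_n(A_G)$, so the product $\lambda_1(A_G)\lambda_n(A_G)$ and the quantity $\lambda_1(A_G)\lambda_n(A_G) - \delta^2$ are both negative, and the Haemers bound is therefore a positive real number. This legitimizes taking reciprocals (which reverses the inequality), yielding
\begin{equation*}
    \frac{1}{\alpha(G)} \geq \frac{1}{n} \cdot \frac{\lambda_1(A_G)\lambda_n(A_G) - \delta^2}{\lambda_1(A_G)\lambda_n(A_G)}.
\end{equation*}
Multiplying through by $kn > 0$ and chaining with $\chi_k(G) \geq kn/\alpha(G)$ gives the claimed bound.

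I expect no genuine obstacle here, as the result is essentially a corollary; the statement even mirrors the derivation of \eqref{eqn:SimpleMulticoloringVerticesLB} for the Hoffman/Haemers family. The only subtlety worth flagging explicitly in the write-up is the sign argument above that justifies inverting the Haemers inequality, together with the degenerate edgeless case, where $\lambda_1(A_G) = \lambda_n(A_G) = 0$ makes the spectral ratio undefined; in that case the bound is vacuous or trivial since $\chi_k(G) \geq k$ holds automatically.
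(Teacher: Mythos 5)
Your proposal is correct and follows exactly the paper's own (one-line) proof: combine the bound $\chi_k(G) \geq kn/\alpha(G)$ from \eqref{eqn:SimpleMulticoloringVerticesLB} with Haemers' bound \eqref{eqn:HaemersIndependentSetBoun}. The extra care you take with the signs when inverting Haemers' inequality and the remark on the edgeless case are sensible additions, but the argument is the same as in the paper.
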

\begin{proof}
The result follows by combining \eqref{eqn:SimpleMulticoloringVerticesLB} and \eqref{eqn:HaemersIndependentSetBoun}.
\end{proof}
\begin{lemma}
\label{thm:MulticoloringSpectrumLB}
For any loopless  graph $G$, we have
\begin{equation*}
    \chi_k(G) \geq k \frac{\lambda_1(L_G)}{\lambda_1(L_G) - \overline{d}_G},
\end{equation*}
where $\overline{d}_G$ denotes the average degree of its vertices, and $L_G$ the Laplacian matrix of $G$.
\end{lemma}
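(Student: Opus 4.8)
The plan is to mirror the proof of Lemma~\ref{thm:MulticoloringSpectrumLB2}, using the Laplacian eigenvalue bound \eqref{eqn:GodsilNewmanIndependentSetBoun} of \citeauthor{godsil2008eigenvalue} in place of the Haemers bound. The backbone of the argument is the multicoloring version of the elementary clique-cover inequality \eqref{eqn:SimpleMulticoloringVerticesLB}, which states that $\chi_k(G) \geq k n / \alpha(G)$. It therefore suffices to bound $\alpha(G)$ from above and substitute.

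First I would invoke \eqref{eqn:GodsilNewmanIndependentSetBoun}, which is valid for any loopless graph and hence applies here without any additional hypotheses, to obtain
\begin{equation*}
    \alpha(G) \leq n \frac{\lambda_1(L_G) - \overline{d}_G}{\lambda_1(L_G)}.
\end{equation*}
Rearranging this into a lower bound on $n/\alpha(G)$ and inserting it into \eqref{eqn:SimpleMulticoloringVerticesLB} then yields
\begin{equation*}
    \chi_k(G) \geq k \frac{n}{\alpha(G)} \geq k \frac{\lambda_1(L_G)}{\lambda_1(L_G) - \overline{d}_G},
\end{equation*}
which is the claimed inequality.

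The single technical point that must be verified --- and the nearest thing to an obstacle in an otherwise one-line substitution --- is the direction of the inequality when inverting \eqref{eqn:GodsilNewmanIndependentSetBoun}: one needs $\lambda_1(L_G) - \overline{d}_G > 0$ for the rearrangement to be valid and the right-hand side to be positive and finite. This holds whenever $G$ has at least one edge, since then $\lambda_1(L_G)$ strictly exceeds the maximum degree of $G$, which in turn is at least the average degree $\overline{d}_G$; for the edgeless graph both quantities vanish and the statement is degenerate. I would note this mild nondegeneracy condition explicitly, after which the proof reduces to chaining the two displayed bounds.
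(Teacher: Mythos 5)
Your proof is correct and is exactly the paper's argument: the paper also proves this lemma in one line by combining \eqref{eqn:SimpleMulticoloringVerticesLB} with \eqref{eqn:GodsilNewmanIndependentSetBoun}. Your extra remark on the nondegeneracy condition $\lambda_1(L_G) > \overline{d}_G$ is a sensible addition that the paper leaves implicit.
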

\begin{proof}
The result follows by combining   \eqref{eqn:SimpleMulticoloringVerticesLB} and \eqref{eqn:GodsilNewmanIndependentSetBoun}.
\end{proof}

When $G$ is a regular graph, \eqref{eqn:GodsilNewmanIndependentSetBoun} is equivalent to the result of theorem \ref{lemma:HoffmanStableSetUB}, and therefore the result of lemma \ref{thm:MulticoloringSpectrumLB} is equivalent to:
\begin{equation*}
    \chi_k(G) \geq k \Bigg( 1 - \frac{\lambda_1(A_G)}{\lambda_n(A_G)} \Bigg).
\end{equation*}
It is not difficult to verify that complete graphs  attain the bound of lemma \ref{thm:MulticoloringSpectrumLB2} and lemma \ref{thm:MulticoloringSpectrumLB}.\\

We end this section by presenting bounds on the multichromatic number of Johnson graphs, see definition \ref{Def:JohnsonGraphs}.
We study the simple case $J(n,2, 1)$, $n \geq 4$. Graph $J(n,2,1)$ is sometimes referred to as the triangular graph. The graph $J(n,2, 1)$ is the complement graph of the Kneser graph $K(n,2)$, and both are known to be strongly regular. Every vertex of $J(n,2,1)$ corresponds to a set of two elements. These two elements can be thought of as two vertices of the complete graph $K_n$, with the vertex in $J(n,2,1)$ representing the edge between these two vertices of $K_n$. Graph $J(n, 2, 1)$ is thus the line graph of the complete graph $K_n$. For any graph $G$, its line graph is denoted $L(G)$.

\begin{proposition} \label{prop:chiTriangular}
For the triangular graph  $J(n,2, 1)$ and $n \geq 4$, we have
\begin{equation*}
k(n-1) \leq \chi_k(J(n,2,  1)) \leq k ( 2 \floor{\frac{n-1}{2}}+1).
\end{equation*}
\end{proposition}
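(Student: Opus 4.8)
The plan is to exploit the fact, noted just before the statement, that $J(n,2,1)$ is precisely the line graph $L(K_n)$ of the complete graph, and then to sandwich $\chi_k$ between $k\omega$ and $k\chi$ using inequality \eqref{eqn:MultichromBounded}. Both bounds in the proposition will turn out to coincide with $k\,\omega(J(n,2,1))$ and $k\,\chi(J(n,2,1))$ respectively, so the entire proof reduces to computing the clique number and the ordinary chromatic number of the triangular graph.

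For the lower bound I would first determine $\omega(J(n,2,1))$. A clique in $L(K_n)$ corresponds to a family of edges of $K_n$ that pairwise share an endpoint; such a family is either a star of edges through a common vertex or a triangle. For $n \geq 4$ the largest such family is the star through a fixed vertex, which consists of $n-1$ edges, so $\omega(J(n,2,1)) = n-1$. The left inequality of \eqref{eqn:MultichromBounded} then gives $\chi_k(J(n,2,1)) \geq k\,\omega(J(n,2,1)) = k(n-1)$.

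For the upper bound I would use that a proper coloring of $L(K_n)$ is exactly a proper edge coloring of $K_n$, so that $\chi(J(n,2,1))$ equals the chromatic index $\chi'(K_n)$. The classical edge-coloring of complete graphs gives $\chi'(K_n) = n-1$ when $n$ is even (via a $1$-factorization into $n-1$ perfect matchings) and $\chi'(K_n) = n$ when $n$ is odd (each color class is a matching of at most $(n-1)/2$ edges, and $K_n$ has $\binom{n}{2}$ edges, forcing at least $n$ colors, which a near-$1$-factorization attains). A short parity check shows that $2\lfloor \frac{n-1}{2}\rfloor + 1$ equals $n-1$ for even $n$ and $n$ for odd $n$, i.e.\ it equals $\chi'(K_n)$ in both cases. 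The right inequality of \eqref{eqn:MultichromBounded} then yields $\chi_k(J(n,2,1)) \leq k\,\chi(J(n,2,1)) = k\,\chi'(K_n) = k\bigl(2\lfloor \frac{n-1}{2}\rfloor + 1\bigr)$.

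I do not expect a serious obstacle: both inequalities follow immediately once the clique number and chromatic number of the line graph are identified, and the only thing requiring care is the parity bookkeeping that reconciles the floor expression with the two cases of $\chi'(K_n)$. The mildly delicate point is the clique-number computation, where one must rule out the triangle being larger than the star; this is exactly why the hypothesis $n \geq 4$ is imposed, since it guarantees $n-1 \geq 3$ so that the star dominates. Note also that for even $n$ the two bounds coincide, pinning down $\chi_k(J(n,2,1)) = k(n-1)$ exactly, while a genuine gap of size $k$ remains for odd $n$.
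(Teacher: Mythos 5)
Your proposal is correct and follows essentially the same route as the paper: identify $J(n,2,1)$ with $L(K_n)$, get the lower bound from $\chi_k \geq k\,\omega$ with $\omega(L(K_n)) = n-1$, and get the upper bound from $\chi_k \leq k\,\chi$ together with the chromatic index of $K_n$ (the paper cites Baranyai for even $n$ and Vizing for odd $n$, which is exactly your $1$-factorization argument). Your parity check that $2\lfloor\frac{n-1}{2}\rfloor+1$ equals $\chi'(K_n)$ in both cases, and your remark that the bounds coincide for even $n$, match the paper's treatment.
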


\begin{proof}
As $J(n,2,1)$ is isomorphic to $L(K_n)$, a coloring of $J(n,2, 1)$  is equivalent to an edge coloring of $K_n$. It is not hard to see that $\omega(L(G)) $ equals the maximum degree of a vertex of $G$. Thus $\omega(L(K_n)) = n-1$.
For even $n$, $\chi(L(K_n)) = n-1$, see \citeauthor{baranyai1974factorization} \cite{baranyai1974factorization}. Therefore, for even $n$ we have
\begin{equation*}
    \chi(L(K_n)) = \omega(L(K_n)) \Longrightarrow \chi_k(L(K_n)) = k \chi(L(K_n)) = k(n-1),
\end{equation*}
where the implication follows from \eqref{eqn:MultichromBounded}. For odd $n$, $\chi(L(K_n)) = n$, see \citeauthor{vizing1964estimate} \cite{vizing1964estimate}. By \eqref{eqn:MultichromBounded}, the proposition follows.
\end{proof}
Note that in the proof of the previous proposition we could also exploit the following well known result: $\alpha(K(n,2))=n-1$.

\subsection{Hamming graphs}
\label{section:HammingGraphs}
In this section we present results for the (multi)chromatic number of Hamming graphs (definition \ref{Def:HammingGraph}). We also provide sufficient and necessary conditions for the Hamming graph to be perfect.

In the Hamming graph $H(n, q, F)$, the vertex set is the set of $n$-tuples of letters from an alphabet of size $q$, and vertices $u$ and $v$ are adjacent if  their Hamming distance satisfies $d(u,v) \in F$.
Note that $|V(H(n, q, F))|=q^n$.  By slight abuse of notation, we will use the terms vectors and vertices interchangeably, as they permit a one-to-one correspondence in Hamming graphs. Many authors refer to $H(n,q,\{1\})$ as the Hamming graph. The graph $Q_n:=H(n,2,\{ 1\})$ is also known as the binary Hamming graph or hypercube graph.

We first list several known results for $H(n,q, \{1\})$.  Graph $H(n,q, \{1\})$ equals the Cartesian product of $n$ copies of $K_q$. Thus $H(n,q, \{1\}) = \square^n K_q$, see definition \ref{Def:GraphProducts}. Furthermore, it holds   $\chi(G_1 \square G_2) = \text{max} \{ \chi(G_1), \chi(G_2) \}$, see  \citeauthor{sabidussi1957graphs} \cite{sabidussi1957graphs}. Therefore,
$\chi(H(n,q, \{1\})) = q.$
To derive the independence number of $H(n,q, \{1\})$, we proceed as follows. Let $S \subset V$ be a stable set of $H(n,q, \{1\})$. Then
$\underset{u,v \in S, \, u \neq v}{\text{Min}} d(u,v) \geq 2.$
From coding theory, the Singleton bound (\citeauthor{singleton1964maximum} \cite{singleton1964maximum}) is an upper bound on the maximum number of codes of length $n$, using an alphabet of size $q$,
such that a Hamming distance between any two codes is at least  two. In particular, from the Singleton bound we have   $\alpha(H(n,q, \{1\})) \leq q^{n-1}$.

To show that $\alpha(H(n,q, \{1\})) \geq q^{n-1}$, we construct an independent set in the Hamming graph of size $q^{n-1}$, by a construction employed in \cite{singleton1964maximum}. Consider all the vectors in $(\mathbb{Z}/q\mathbb{Z})^n$ for which
the coordinates sum to some $x \in \mathbb{Z}/q\mathbb{Z}$. By symmetry, there exist $q^{n-1}$ vectors satisfying this condition. Note that any two different vectors satisfying this condition must differ in at least two positions, which implies they are not adjacent. Thus, $\alpha(H(n,q, \{1\})) \geq q^{n-1}$ and combined with the Singleton bound, this gives $\alpha(H(n,q, \{1\})) = q^{n-1}$.

To the best of our knowledge, the following results are not known in the literature.
\begin{lemma}
\label{Thm:HammingGraphMulti}
For $k \leq q^n$, $\chi_k(H(n,q,\{ 1 \} )) = kq$.
\end{lemma}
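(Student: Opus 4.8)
The plan is to show that $H(n,q,\{1\})$ is weakly perfect, i.e.\ that its clique number and chromatic number coincide, and then to squeeze $\chi_k$ between the two sides of the sandwich \eqref{eqn:MultichromBounded}. Since that sandwich reads $k\,\omega(G) \leq \chi_k(G) \leq k\,\chi(G)$, any graph with $\omega(G)=\chi(G)$ immediately has $\chi_k(G)=k\,\omega(G)$; the whole argument therefore reduces to identifying $\omega$ and $\chi$ for this graph.

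First I would recall from the preceding discussion that $\chi(H(n,q,\{1\})) = q$, which follows from $H(n,q,\{1\}) = \square^n K_q$ together with Sabidussi's formula $\chi(G_1 \square G_2) = \max\{\chi(G_1),\chi(G_2)\}$. Next I would compute the clique number. On the one hand, fixing all but one coordinate and letting the remaining coordinate range over the entire alphabet produces $q$ vertices that pairwise differ in exactly one position, hence a clique of size $q$; thus $\omega(H(n,q,\{1\})) \geq q$. On the other hand, $\omega(G) \leq \chi(G)$ holds for every graph, so $\omega(H(n,q,\{1\})) \leq q$. Combining these gives $\omega(H(n,q,\{1\})) = \chi(H(n,q,\{1\})) = q$, so the graph is weakly perfect.

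Finally I would invoke \eqref{eqn:MultichromBounded} with $G = H(n,q,\{1\})$: substituting $\omega = \chi = q$ collapses both the lower and upper bounds to $kq$, which forces $\chi_k(H(n,q,\{1\})) = kq$. (Equivalently, one may cite the weakly-perfect multicoloring identity \eqref{eqn:WeaklyPerfectMulticoloring} directly.)

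There is essentially no obstacle here: the only nontrivial input is the identity $\omega = \chi = q$, and both bounding inequalities are already available from \eqref{eqn:MultichromBounded}. The hypothesis $k \leq q^n$ in fact plays no role in the argument — the conclusion holds for every $k \geq 1$ — and is presumably stated only to keep $k$ within the range of the order $q^n$ of the graph, matching the conventions used for $\vartheta_k$ elsewhere in the paper.
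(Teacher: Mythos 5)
Your proof is correct and follows essentially the same route as the paper: exhibit a $q$-clique by varying one coordinate, use $\omega \leq \chi = q$ to conclude the graph is weakly perfect, and apply \eqref{eqn:MultichromBounded}. Your side remark that the hypothesis $k \leq q^n$ is not actually needed is also accurate.
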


\begin{proof}
Let us denote $H = H(n, q, \{1 \} )$. Consider the vectors in $H$ for which the first entry ranges from $0$ up to and including $q-1$, while the other entries equal $0$. This gives a clique of size $q$ and since $\omega(H) \leq \chi(H) = q$, we have $\omega(H) = q$. From \eqref{eqn:MultichromBounded}, it follows the result.
\end{proof}

The proof of lemma \ref{Thm:HammingGraphMulti} relies on the fact that $\omega(H) = \chi(H)$, or equivalently, that $H(n,q, \{1\})$ is a weakly perfect graph. In general, for any weakly perfect graph $G$
\begin{equation}
    \label{eqn:WeaklyPerfectMulticoloring}
    \omega(G) = \chi(G) \Longrightarrow \chi_k(G) = k \chi(G).
\end{equation}

This gives rise to the question for which values of $q$ and $n$ the graph $H(n,q, \{ 1 \} )$ is perfect. The strong perfect graph theorem states that a graph is perfect if and only if it does not contain $C_{2n+1}$ or $\overline{C}_{2n+1}$
as induced subgraphs, for all $n > 1$.
\begin{proposition}
\textit{The Hamming graph} $H(n, q, \{ 1 \} )$ \textit{is a perfect graph if and only if} $n \leq 2$ or $q \leq 2$.
\end{proposition}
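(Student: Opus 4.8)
The plan is to prove both implications using the strong perfect graph theorem as stated just above: $H(n,q,\{1\})$ is perfect if and only if it contains no induced $C_{2m+1}$ and no induced $\overline{C}_{2m+1}$ with $m>1$. I would split the argument into the easy sufficiency direction (``$n\le 2$ or $q\le 2$ implies perfect'') and the harder necessity direction, handled by contraposition (``$n\ge 3$ and $q\ge 3$ implies not perfect'').

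For the sufficiency direction I would treat three cases, each reducing to a classically perfect graph. If $q\le 2$, then $H(n,2,\{1\})=Q_n$ is the hypercube, which is bipartite (color each binary string by the parity of its weight; adjacent strings differ in one coordinate and hence have opposite parity), and bipartite graphs are perfect. If $n=1$, then $H(1,q,\{1\})=K_q$ is complete, hence perfect. If $n=2$, then $H(2,q,\{1\})=K_q\,\square\,K_q$ is the rook's graph, and one checks directly that $K_q\,\square\,K_q\cong L(K_{q,q})$; since the line graph of a bipartite graph is perfect (a classical result), $H(2,q,\{1\})$ is perfect. None of these steps is difficult.

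The substance is the necessity direction. First I would observe that whenever $n\ge 3$ and $q\ge 3$, the graph $H(3,3,\{1\})$ occurs as an induced subgraph of $H(n,q,\{1\})$: restrict to those vertices whose coordinates $4,\dots,n$ all equal $0$ and whose first three coordinates range over $\{0,1,2\}$. On this vertex set Hamming distances are computed using only the first three coordinates, so adjacency agrees with that of $H(3,3,\{1\})$. It therefore suffices to exhibit one forbidden induced subgraph inside $H(3,3,\{1\})$. The key step is to produce an \emph{induced} odd hole, namely
\[
(0,0,0),\ (1,0,0),\ (1,1,0),\ (2,1,0),\ (2,1,1),\ (0,1,1),\ (0,0,1),
\]
read cyclically. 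Consecutive vectors differ in exactly one coordinate (so they are adjacent), and I would verify that every one of the $14$ non-consecutive pairs has Hamming distance at least $2$ (so they are non-adjacent); this shows the seven vectors induce a $C_7$, and the strong perfect graph theorem then gives that $H(3,3,\{1\})$, and hence $H(n,q,\{1\})$ for $n,q\ge 3$, is not perfect.

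The main obstacle, and the conceptually interesting point, is that one cannot use the shortest odd hole $C_5$: in any cycle of the Cartesian product each coordinate that changes must change at least twice in order to return to its starting value, so a $C_5$ can involve at most two coordinates and must live inside a copy of the perfect rook's graph $K_q\,\square\,K_q$, which contains no induced $C_5$. This forces the construction up to length seven, where three coordinates (changing $3,2,2$ times respectively) become available, and the bulk of the work is the routine but necessary verification that the displayed seven vectors induce $C_7$ rather than some chordal supergraph.
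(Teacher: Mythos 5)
Your proof is correct and follows essentially the same route as the paper: both directions rest on the strong perfect graph theorem, the easy cases reduce to complete, bipartite, and rook's graphs, and the necessity direction exhibits an induced $C_7$ in $H(3,3,\{1\})$ padded with zeros. Your seven vectors differ from the paper's, but I checked all $14$ non-consecutive pairs and they do induce a $C_7$, so the argument goes through.
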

\begin{proof}
Denote $H(n,q) = H(n,q,\{ 1 \})$. Graph $H(1,q)$ is $K_q$, which is clearly a perfect graph. Graph $H(2, q)$ is a lattice graph, or Rook's graph, which is also a perfect graph. Graph $H(n, 1)$ is a single vertex and thus also perfect. Lastly, graph $H(n, 2)$ is bipartite and thus perfect. For $q \geq 3$, the following vectors from $H(3,q)$ form $C_7$:
\begin{equation*}
    \begin{bmatrix}
    0 \\ 0 \\ 0
    \end{bmatrix},
    \begin{bmatrix}
    1 \\ 0 \\ 0
    \end{bmatrix},
    \begin{bmatrix}
    1 \\ 1 \\ 0
    \end{bmatrix},
    \begin{bmatrix}
    1 \\ 1 \\ 1
    \end{bmatrix},
    \begin{bmatrix}
    2 \\ 1 \\ 1
    \end{bmatrix},
    \begin{bmatrix}
    2 \\ 0 \\ 1
    \end{bmatrix},
    \begin{bmatrix}
    0 \\ 0 \\ 1
    \end{bmatrix}.
\end{equation*}
Then by the strong perfect graph theorem, $H(3, q)$ is not perfect. An odd cycle in $H(n,q)$ for general $n,q \geq 3$, is obtained by simply adjoining zeros to the above vectors such that they become $n$-dimensional.
\end{proof}
As $H(n,q, \{ f \} )$ is edgeless for $f > n$, we consider the extremal case $H(n, q, \{ n \} )$ for $n > 1$. Note that $H(1, q, \{ 1 \} )= K_q$. Graph $H(n,q,\{ n\})$ can be described by use of the tensor product of graphs (see definition \ref{Def:GraphProducts}).
In particular, we have  that $H(n, q, \{ n \} ) = \otimes ^n K_q$.

Since all the edges of $G_1 \otimes G_2$ also appear in $G_1 \circ G_2$, it follows that  $\chi(G_1 \otimes G_2) \leq \chi(G_1 \circ G_2)$. Moreover, by  \citeauthor{hedetniemi1966homomorphisms} \cite{hedetniemi1966homomorphisms}, we have
\begin{equation}
    \label{eqn:ChromaticTensorInequality}
  \chi(G_1 \otimes G_2) \leq \min \{ \chi(G_1), \chi(G_2) \}.
\end{equation}
\textit{Hedetniemi's conjecture} states that \eqref{eqn:ChromaticTensorInequality} holds with equality. The conjecture was recently  disproved by \citeauthor{shitov2019counterexamples} \cite{shitov2019counterexamples}.
Inequality \eqref{eqn:ChromaticTensorInequality} implies  that $\chi(\otimes^n K_q) \leq q$.

The vectors $i \cdot \vec{1}$ for $ 0 \leq i \leq q-1$ form a clique of size $q$ in graph $H(n,q, \{ n \} )$. Thus $q\leq \omega(H(n,q,\{n \}))$. Now, from this inequality and  $\chi( \otimes^n K_q ) \leq q$ it follows that $\chi(H(n,q, \{n \} )) = q$. Using \eqref{eqn:MultichromBounded} or \eqref{eqn:WeaklyPerfectMulticoloring}, we find $\chi_k(H(n,q,\{ n\} )) = kq$.  The coloring of these tensor products of graphs has been previously considered by \citeauthor{greenwell1974applications} \cite{greenwell1974applications}, where they also proved this result.

Let us now define $\Setname := \{ i \in \mathbb{N} \, | \, f \leq i \, \}.$ The Hamming graph $H(n,q, \Setname)$ has been studied by \citeauthor{el2007bounds} \cite{el2007bounds} among others. They show that, under some condition on the parameters $n$, $q$ and $f$, $\chi(H(n,q, \Setname)) = q^{n-f+1}$. We extend this result to multicoloring in the following proposition.

\begin{proposition} \label{prop:chiHamming}
For $q \geq n-f+2$ and $1 \leq f \leq n$, we have $\chi_k(H(n,q, \Setname)) = kq^{n-f+1}.$
\end{proposition}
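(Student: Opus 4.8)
The plan is to prove the two matching bounds $kq^{n-f+1}\le \chi_k(H(n,q,f^+))\le kq^{n-f+1}$, i.e.\ that here $\chi_k=k\chi$ even though (as I note below) the graph need not be weakly perfect. \citeauthor{el2007bounds}~\cite{el2007bounds} supply $\chi(H(n,q,f^+))=q^{n-f+1}$ in the regime $q\ge n-f+2$, so by \eqref{eqn:MultichromBounded} the upper bound $\chi_k(H(n,q,f^+))\le k\chi(H(n,q,f^+))=kq^{n-f+1}$ is immediate. The whole content is therefore the matching lower bound, which I would extract from \eqref{eqn:SimpleMulticoloringVerticesLB}, namely $\chi_k(G)\ge k\,|V(G)|/\alpha(G)$. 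Since $|V(H(n,q,f^+))|=q^n$, it suffices to show that the independence number is exactly $q^{f-1}$: then $k\,q^n/q^{f-1}=kq^{n-f+1}$ and the two estimates coincide.

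Establishing $\alpha(H(n,q,f^+))=q^{f-1}$ splits into an easy and a hard inequality. For $\alpha\ge q^{f-1}$ I would exhibit an explicit stable set: fix any $n-f+1$ coordinates to constant values and let the remaining $f-1$ coordinates range freely over $\mathbb{Z}/q\mathbb{Z}$. Any two of the resulting $q^{f-1}$ vectors agree on the $n-f+1$ fixed coordinates, so their Hamming distance is at most $f-1<f$ and they are non-adjacent. It is worth stressing that, in contrast to Lemma~\ref{Thm:HammingGraphMulti} and the other Hamming results, this proposition cannot be routed through weak perfectness and \eqref{eqn:WeaklyPerfectMulticoloring}: the graph $H(n,q,f^+)$ is in general \emph{not} weakly perfect. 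Indeed a clique of $H(n,q,f^+)$ is a $q$-ary code of length $n$ and minimum distance at least $f$, and a short double-counting over coordinates shows, for example, $\omega(H(10,3,\{i:i\ge 9\}))<9=\chi$; so here $\chi_k=k\chi$ holds \emph{without} $\omega=\chi$, and the clique bound $\chi_k\ge k\omega$ is too weak.

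The crux, and what I expect to be the main obstacle, is the reverse inequality $\alpha(H(n,q,f^+))\le q^{f-1}$. This is a diametric (anticode) statement: any subset of $(\mathbb{Z}/q\mathbb{Z})^n$ whose pairwise Hamming distances are all at most $D:=f-1$ has cardinality at most $q^{D}$, provided $q\ge n-D+1=n-f+2$. The hypothesis $q\ge n-f+2$ is exactly the threshold at which the coordinate-fixing anticode above becomes extremal (for smaller $q$ a Hamming ball can be larger, so the bound $q^{f-1}$ genuinely fails), and this is why the condition in the proposition is not cosmetic. Note that the naive Delsarte code–anticode route is unavailable, since the complementary code would have to be an MDS code of size $q^{n-f+1}$, which need not exist when $q$ is small relative to $n$; the correct input is the genuine diametric theorem for the Hamming space, which is precisely the anticode bound underlying the chromatic lower bound of \cite{el2007bounds}. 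Once $\alpha(H(n,q,f^+))=q^{f-1}$ is available, combining \eqref{eqn:SimpleMulticoloringVerticesLB} with \eqref{eqn:MultichromBounded} sandwiches $\chi_k(H(n,q,f^+))$ between $kq^{n-f+1}$ and $kq^{n-f+1}$, proving the equality.
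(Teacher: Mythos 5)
Your proof is correct and takes essentially the same route as the paper: the paper likewise combines the lower bound \eqref{eqn:SimpleMulticoloringVerticesLB} with the upper bound $\chi_k \leq k\chi$ from \eqref{eqn:MultichromBounded}, importing the key fact $\alpha(H(n,q,\Setname))=q^{f-1}$ from the literature (the diametric/anticode result, cited there as \cite{frankl1999erdHos}) together with $\chi(H(n,q,\Setname))=q^{n-f+1}$ from \cite{el2007bounds}. Your extra remarks (the explicit anticode construction, and the observation that the graph need not be weakly perfect so the clique bound cannot substitute for the independence-number bound) are correct but not needed beyond what the paper's two-line proof already does.
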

\begin{proof}
For parameters $n$, $q$ and $f$ satisfying the conditions of the proposition, it is known (cf. \cite{frankl1999erdHos}) that $\alpha(H(n,q, \Setname)) = q^{f-1}$. By \eqref{eqn:MultichromBounded} and \eqref{eqn:SimpleMulticoloringVerticesLB}, the proposition follows.
\end{proof}
Binary Hamming graphs $H(n,2,\{ f\})$, $f \leq n$ form another interesting case.
Recall that the Hamming weight of a vector is its Hamming distance to the zero vector, and that
the Hamming graphs are vertex-transitive.
\begin{theorem}
\label{Thm:GenHamming}
\textit{For all $n \in \mathbb{N}$, $f$ odd and $f \leq n$, } $\chi_k(H(n,2,\{f \})) = 2k.$
\end{theorem}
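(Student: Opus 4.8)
The plan is to show that $H(n,2,\{f\})$ is bipartite with at least one edge, which immediately forces $\omega = \chi = 2$, and then to invoke the multicoloring sandwich bound to conclude $\chi_k = 2k$. The engine of the argument is a parity observation: for binary vectors $u,v$ one has $d(u,v) = \operatorname{wt}(u \oplus v) \equiv \operatorname{wt}(u) + \operatorname{wt}(v) \pmod 2$, since $\operatorname{wt}(u \oplus v) = \operatorname{wt}(u) + \operatorname{wt}(v) - 2|\operatorname{supp}(u) \cap \operatorname{supp}(v)|$. Because $f$ is odd, any two adjacent vertices (i.e. vertices at Hamming distance exactly $f$) must therefore have Hamming weights of opposite parity.

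Concretely, I would color each vertex by the parity of its Hamming weight: even-weight vectors receive color $1$ and odd-weight vectors receive color $2$. By the parity identity above, every edge joins an even-weight vertex to an odd-weight vertex, so this is a proper $2$-coloring and $\chi(H(n,2,\{f\})) \leq 2$. Since $f \leq n$, the graph has at least one edge — for instance the zero vector and any weight-$f$ vector are adjacent — so $\chi \geq 2$ and hence $\chi(H(n,2,\{f\})) = 2$. A bipartite graph with an edge also has clique number exactly $2$, so $\omega(H(n,2,\{f\})) = 2$.

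Having established $\omega(H(n,2,\{f\})) = \chi(H(n,2,\{f\})) = 2$, the graph is weakly perfect, and the conclusion follows at once from \eqref{eqn:WeaklyPerfectMulticoloring} (equivalently, by squeezing with the lower and upper bounds in \eqref{eqn:MultichromBounded}, which give $2k = k\,\omega \leq \chi_k \leq k\,\chi = 2k$). This yields $\chi_k(H(n,2,\{f\})) = 2k$.

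There is no real obstacle here beyond spotting the parity argument; the only point that requires a moment of care is verifying that the graph is genuinely non-edgeless, which is guaranteed precisely by the hypothesis $f \leq n$. I would also note that the oddness of $f$ is essential: for even $f$ the weight-parity coloring degenerates (adjacent vertices share parity), the bipartite structure is lost, and one should not expect $\chi_k = 2k$ in general, so the hypothesis cannot be dropped.
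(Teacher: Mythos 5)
Your proof is correct and follows essentially the same route as the paper: both establish that $H(n,2,\{f\})$ is bipartite with respect to the parity of the Hamming weight and then conclude via \eqref{eqn:MultichromBounded}. The only cosmetic difference is that you justify the bipartition by the direct parity identity $d(u,v)\equiv \operatorname{wt}(u)+\operatorname{wt}(v) \pmod 2$, whereas the paper observes it first at the zero vector and then appeals to vertex-transitivity; your version is slightly more self-contained, and your explicit check that the graph has an edge is a small point of care the paper leaves implicit.
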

\begin{proof}
Let $n,f \in \mathbb{N}$, $f$ odd and $f \leq n$. Consider the zero vector in $H(n,2,\{f \})$. Note that every vector adjacent (orthogonal) to the zero vector has an odd Hamming weight. By vertex transitivity, all vectors of even Hamming weight only have vectors of odd Hamming weight as neighbors. Similarly, vectors of odd Hamming weight only have vectors of even Hamming weight as neighbors. Graph $H(n,2,\{f \})$ is thus bipartite, which, combined with \eqref{eqn:MultichromBounded}, proves the theorem.
\end{proof}
It readily follows the following results for  orthogonality graphs.
\begin{corollary}
\textit{Let $\Omega_{4n+2}$ ($n \in \mathbb{N}$) be the orthogonality graph. Then,
$\chi_k(\Omega_{4n+2}) = 2k$.}
\end{corollary}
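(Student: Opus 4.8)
The plan is to recognize $\Omega_{4n+2}$ as a binary Hamming graph of the precise form covered by theorem \ref{Thm:GenHamming}, and then simply verify that the hypotheses of that theorem are met. From the description in section \ref{section:OrthogonalityGraph}, the orthogonality graph on binary vectors of dimension $N$ is $\Omega_N = H(N, 2, \{ N/2 \})$, where two vectors are adjacent precisely when their Hamming distance equals $N/2$. Setting $N = 4n+2$, I would therefore first rewrite
\[
\Omega_{4n+2} = H\big(4n+2,\, 2,\, \{ 2n+1 \}\big),
\]
so that the dimension parameter is $4n+2$ and the single admissible Hamming distance is $f = (4n+2)/2 = 2n+1$.

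Next I would check that this value of $f$ satisfies the two conditions required by theorem \ref{Thm:GenHamming} (whose dimension parameter I take to be $4n+2$ and whose distance parameter I take to be $2n+1$). The first condition is that $f$ be odd: this is immediate, since $2n+1$ is odd for every $n \in \mathbb{N}$. The second condition is that $f$ not exceed the dimension, i.e.\ $2n+1 \leq 4n+2$; this holds for all $n \geq 0$, as $4n+2 - (2n+1) = 2n+1 \geq 0$. Since $\Omega_{4n+2}$ has a nonempty edge set (indeed $4n+2$ is a multiple of $2$ and the distance $2n+1$ is realizable), both hypotheses are genuinely applicable.

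With both conditions verified, theorem \ref{Thm:GenHamming} applies directly and yields $\chi_k\big(H(4n+2, 2, \{2n+1\})\big) = 2k$, which is exactly $\chi_k(\Omega_{4n+2}) = 2k$. I do not expect any real obstacle here: the content of the argument is entirely the identification of the parameters, and the only point demanding care is the notational clash between the index $n$ appearing in $\Omega_{4n+2}$ and the dimension variable named $n$ in the statement of theorem \ref{Thm:GenHamming}. Once the substitution (dimension $= 4n+2$, distance $f = 2n+1$) is made explicit, the oddness of $2n+1$ is precisely the feature that forces the bipartite structure underlying theorem \ref{Thm:GenHamming}, and the corollary is an immediate consequence.
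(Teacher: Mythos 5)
Your proposal is correct and matches the paper's own proof exactly: the paper also identifies $\Omega_{4n+2}$ with $H(4n+2,2,\{2n+1\})$ and invokes theorem \ref{Thm:GenHamming}, with your version merely spelling out the routine verification that $f=2n+1$ is odd and does not exceed the dimension.
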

\begin{proof}
Graph $\Omega_{4n+2}$ is isomorphic to $H(4n+2,2, \{2n+1\})$. This corollary is thus a special case of theorem \ref{Thm:GenHamming}.
\end{proof}

\section{Conclusion}

In this paper, we study the generalized $\vartheta$-number for highly symmetric graphs and beyond. The parameter $\vartheta_k(G)$   generalizes the concept of the  famous $\vartheta$-number  that  was introduced by   \citeauthor{lovasz1979shannon} \cite{lovasz1979shannon}.
Since $\vartheta_k(G)$ is sandwiched between the  $\alpha_k(G)$ and $\chi_k (\overline{G})$ it serves as a bound for both graph parameters.

Several results in this paper are not restricted to highly symmetric graphs.
In particular, the results in section \ref{section:formulations}, section \ref{sect:the_series} and section \ref{sect:Graph products}.
In section \ref{section:formulations} we present in an elegant way a known result that  $\vartheta_k (G)$ is a lower bound for  $\chi_k (\overline{G})$. Another lower bound for  $\chi_k (\overline{G})$
is $k\vartheta(G)$, see \eqref{eqn:VarthetaIneq}.
The inequality \eqref{eqn:VarthetaIneq} is rather  counter-intuitive since it is more difficult to compute $\vartheta_k(G)$ than  $\vartheta(G)$, while  $k\vartheta(G)$ provides a better bound for the $k$-th chromatic number.
However,  the generalized  $\vartheta$-number can also be used to compute lower bounds for the (classical) chromatic number of a graph, see section \ref{Section:AnalysisOfCart}.

In section \ref{sect:the_series} we show that the sequence  $(\vartheta_k(G))_k$
is increasing  and bounded above by the order of $G$ (proposition \ref{thm:ThetaKIncreasing} and theorem \ref{thm:SecondOrderDiff}),
and that the increments of the sequence can be arbitrarily small (theorem \ref{thm:DeltaKGraphName}). Section \ref{sect:Graph products}  provides bounds for $\vartheta_k(G)$ where $G$ is the strong graph product of two graphs (theorem \ref{Thm:StrongProdThetaK})  and the disjunction  product of two graphs (theorem \ref{thm:GraphProdDisjunction}).

Sections \ref{Section:ValueOfThetaKForSomeGraphs}, \ref{section:SRG} and  \ref{section:OrthogonalityGraph} consider highly symmetric graphs. We derive closed form expressions for the generalized $\vartheta$-number on  cycles (theorem \ref{Thm:OddCycleTheta2}),  Kneser graphs (theorem \ref{Thm:KneserGraphThetak}), Johnson graphs (theorem \ref{Thm:JhonsonGraphThetak}),
strongly regular graphs (theorem \ref{THm:StronglyRegularThetak}), among other results.
It is known that  $\vartheta(K_k \square G)$  and  $\vartheta_k(G)$ provide upper bounds on $\alpha_k(G)$.  However, it is more computationally demanding to compute  $\vartheta(K_k \square G)$ than  $\vartheta_k(G)$. We show that for graphs that are both edge-transitive and vertex-transitive   it  suffices to solve $\vartheta_k(G)$, see  theorem \ref{thm:CartesianThetaK}.
However, the gap between  $\vartheta_k(G)$ and $\vartheta(K_k \square G)$ can be  arbitrarily large (proposition \ref{prop:Large gap}). We also prove that  $\vartheta'(K_k \square G)$ equals $\vartheta'_k(G)$  for any (non-trivial) strongly regular graph $G$  and $k<n(r+1)/(r+n-d)$, see theorem \ref{thm:CartesianThetaKSRG}. Section \ref{section:OrthogonalityGraph}  presents results for $\vartheta_k(G)$ and $\chi_k(G)$ on orthogonality graphs.

Bounds on the $k$-th chromatic number of various graphs are given in section \ref{sect:lower_bound_for_multicoloring}.
In particular,  bounds on the product and sum of $\chi_k({G})$ and $\chi_k(\overline{G})$ are presented in theorem \ref{Thm:NordhausMulticoloring}. Lemma~\ref{Thm:HammingGraphMulti}, proposition \ref{prop:chiHamming} and theorem~\ref{Thm:GenHamming} provide the multichromatic number for several Hamming graphs,   while proposition \ref{prop:chiTriangular} provides bounds for the multichromatic number on  triangular graphs.

Let us list several open problems.
Prove conjecture \ref{conj:SDPinequalities} for any graph. Recall that we prove  conjecture \ref{conj:SDPinequalities} only for symmetric graphs, see theorem \ref{thm:CartesianThetaK}.
It would be interesting to prove the  conjecture by Godsil, Newman and Frankl \eqref{conjecture} for the first open case  $\Omega_{40}$. Another open problem is to generalize the well-known inequality $\vartheta(G)\vartheta(\overline{G}) \geq |V|$, see~\cite{lovasz1979shannon}, for $\vartheta_k(G)$, $k\geq 2$.

\medskip\medskip

\noindent
{\bf Acknowledgements}
We would like to  thank Ferdinand Ihringer  for pointing us to reference  \cite{frankl1986} and the result $\alpha(\Omega_{24}) = 178208$, where $\Omega_{24}$  is the orthogonality graph.

\medskip\medskip

\bibliographystyle{abbrvnat}
\bibliography{bibMulticolor}

\end{document}